\theoremstyle{definition}
\newtheorem{theorem}{Theorem}[section]
\newtheorem{lemma}[theorem]{Lemma}
\newtheorem{remark}[theorem]{Remark}
\newtheorem{corollary}[theorem]{Corollary}
\newtheorem{proposition}[theorem]{Proposition} 
\newtheorem{problem}[theorem]{Problem}
\newtheorem{definition}[theorem]{Definition}
\newcommand{\T}{\mathrm}
\begin{document}

\title{{\bf Uniquely Distinguishing Colorable Graphs}}

\author{M.~Korivand$^{1}$, N. Soltankhah$^{1,}$\thanks{Corresponding author} , and K.~Khashyarmanesh$^{2}$}

\date{}

\maketitle

\begin{center} 

$^{1}$ Department of Mathematics, Faculty of Mathematical Sciences, Alzahra University, Tehran, Iran
e-mail: {\tt m.korivand@alzahra.ac.ir or mekorivand@gmail.com}, {\tt soltan@alzahra.ac.ir}

$^{2}$ Department of Pure Mathematics, Ferdowsi University of Mashhad, \\
P.O.Box 1159-91775, Mashhad, Iran. \\
e-mail: {\tt khashyar@ipm.ir}

\end{center}

\begin{abstract}
A graph is called uniquely distinguishing colorable if there is only one partition of vertices of the graph that forms distinguishing coloring with the smallest possible colors. In this paper, we study the unique colorability of the distinguishing coloring of a graph and its applications in computing the distinguishing chromatic number of disconnected graphs. We introduce two families of uniquely distinguishing colorable graphs, namely type 1 and type 2, and show that every disconnected uniquely distinguishing colorable graph is the union of two isomorphic graphs of type 2. We obtain some results on bipartite uniquely distinguishing colorable graphs and show that  any uniquely distinguishing $n$-colorable tree with $ n \geq 3$ is a star graph. 
For a connected graph 
$G$, 
we prove that  
$\chi_D(G\cup G)=\chi_D(G)+1$ 
if and only if
$G$ 
is uniquely distinguishing colorable of type 1. 
Also, a characterization of all graphs 
$G$ of order $n$ with the property that $\chi_{D}(G\cup G) = \chi_{D}(G) = k$, where $k=n-2, n-1, n$, 
is given in this paper.
Moreover, we determine all graphs 
$G$ of order $n$ with the property that $\chi_{D}(G\cup G) = \chi_{D}(G)+1 = \ell$, where $\ell=n-1, n, n+1$. 
Finally, we investigate the family of connected graphs $G$ with $\chi_{D}(G\cup G) = \chi_{D}(G)+1 = 3$. 
\end{abstract}

\noindent {\bf Key words}: Uniquely colorable graph, Distinguishing colorable graph, Automorphism of graph.

\medskip\noindent
{\bf AMS Subj.\ Class}: 05C15. 

\section{Introduction}
\label{sec:intro}
In 1977, a concept was introduced by Babaei, which became the basis of one of the most important methods for partitioning and distinguishing members of graphs 
by automorphism \cite{babai}. This concept, which we now know as {\it asymmetric coloring} (or {\it distinguishing labeling}), was added to the graph theory literature in 1996 by Albertson and Collins \cite{alber}, motivated by the following problem:\\

\textit{Professor X, who is blind, keeps keys on a circular key ring. Suppose there are a variety of handle shapes available that can be distinguished by touch. Assume that all keys are symmetrical so that a rotation of the key ring about an axis in its plane is undetectable from an examination of a single key. How many shapes does Professor X need to use in order to keep $n$ keys on the ring and still be able to select the proper key by feel?}\\

In the language of graph theory, this problem is
connected with the minimum number of colors needed to color the vertices of a
cycle graph such that there is a non-identity automorphism of the graph which preserves
all the vertices label. This topic has attracted a lot of attention. 
Google Scholar lists more than 340 citations to the papers of Albertson and Collins 1996 on ``Symmetry Breaking in Graphs". 
First, this concept was completely brought up in the field of group theory. 
So, the distinguishing number was computed for the automorphism group of graphs instead of the graphs, 
and the proof techniques were in the field of group theory.
But the research path changed quickly and the graph theory researchers took up the topic.

However, two research subfields were followed in this concept. Experts in group theory, 
by taking into account that the automorphism group of graphs acts on the vertices of that graph, 
studied this concept in the general case when a group acts on an arbitrary set.
For some outstanding achievements in this topic, we can refer to the articles \cite{Babai, Bailey, Chan, Klavžar, Tymoczko}.

Later, the distinguishing number was assigned to a graph and the proof techniques changed from the structure of automorphism group to the properties of automorphism in graphs such as distance preservation. 
This event had many advantages. In fact, we could color graphs that we do not even know their automorphism group. Because determining the automorphism group of a graph is a difficult problem and we are not interested in making the coloring problem dependent on it. 
Many papers have been written in such a way. For some interesting articles, we can refer to \cite{Ahmadi, Collins, Collins2, Tucker,  Kalinowski, Imrich, Klavžar, Russell, Shekarriz}.

In 2006, Collins and Trenk combined this concept with proper coloring and proposed a new coloring called {\it proper distinguishing coloring} (or {\it distinguishing coloring}) \cite{Collins}.  
This coloring has attracted the attention of many researchers and a large number of articles have been published about it. 
To see some recent results related to this coloring, the reader can refer to the articles \cite{Kalinowski2, Lehner, Meslem}.

In \cite{1}, Harary, Hedetniemi and Robinson introduced and studied the uniquely colorable graphs. In their  work  `coloring' means the  `proper coloring'. The interested reader may refer to the papers \cite{6} and \cite{7} for further  results about uniquely colorable graphs. 

Details of the definitions are described as follows. All graphs considered in this paper are simple. Let $G$ be a graph with vertex set $V(G)$. 
 A coloring (or labeling) of a graph $ G $ is a partition of the vertex set of $G$ into classes,
called the color classes. If a coloring contains exactly $n$ disjoint non-empty color classes, then it is called an $n$-coloring.
The symbol of distinguishing labeling  of  
 $ G $  will always be denoted by $ \left[  D(G) \right]$. 
We say that a coloring $[D(G)]$ with color classes $V_1, \ldots, V_{\ell}$ of $G$ is  distinguishing labeling if 
there is no non-trivial automorphism $f$ of $G$ with $f(V_i)=V_i$ for all $i=1, \ldots, \ell$. We denote the minimum  such $\ell$ by $D(G)$ and is called distinguish number of $G$.  A distinguishing labeling  $[D(G)]$ is distinguishing coloring (or proper distinguishing coloring) if it provides a proper coloring for $G$. 
The distinguishing chromatic number of a graph $G$,  denoted by $ \chi_{D}(G) $,  is the minimum $\ell$ such that
$[D(G)]=\{V_1, \ldots, V_{\ell} \}$ is a distinguishing coloring. 

Recall that a graph which has a proper $k$-coloring with no proper $(k - 1)$-coloring is $k$-chromatic. 
We say that a graph is uniquely distinguishing $ n $-colorable if it has exactly one distinguishing $n$-coloring.  
Furthermore,  we say  a graph is uniquely distinguishing colorable if there is only one partition of its vertex set
into the smallest possible number of distinguishing color classes.
  Actually a uniquely distinguishing colorable graph is a uniquely distinguishing $\chi_{D}(G)$-colorable. The symbols  of  proper coloring  and distinguishing coloring of  
 $ G $  will always denote $ \left[  \chi (G) \right]$  and  $ \left[  \chi_{D}(G) \right]$, respectively. We say vertices $ u $ and $ v $ are siblings if they have a common neighbor.
Finally, a fixed vertex of a graph $G$ is a vertex that is mapped to itself by any automorphism of $G$.  
We use \cite{3} for any terminology and notation not defined here.
\begin{figure}[!h]
\begin{center}
\label{FigE1}
  \includegraphics[width=10cm]{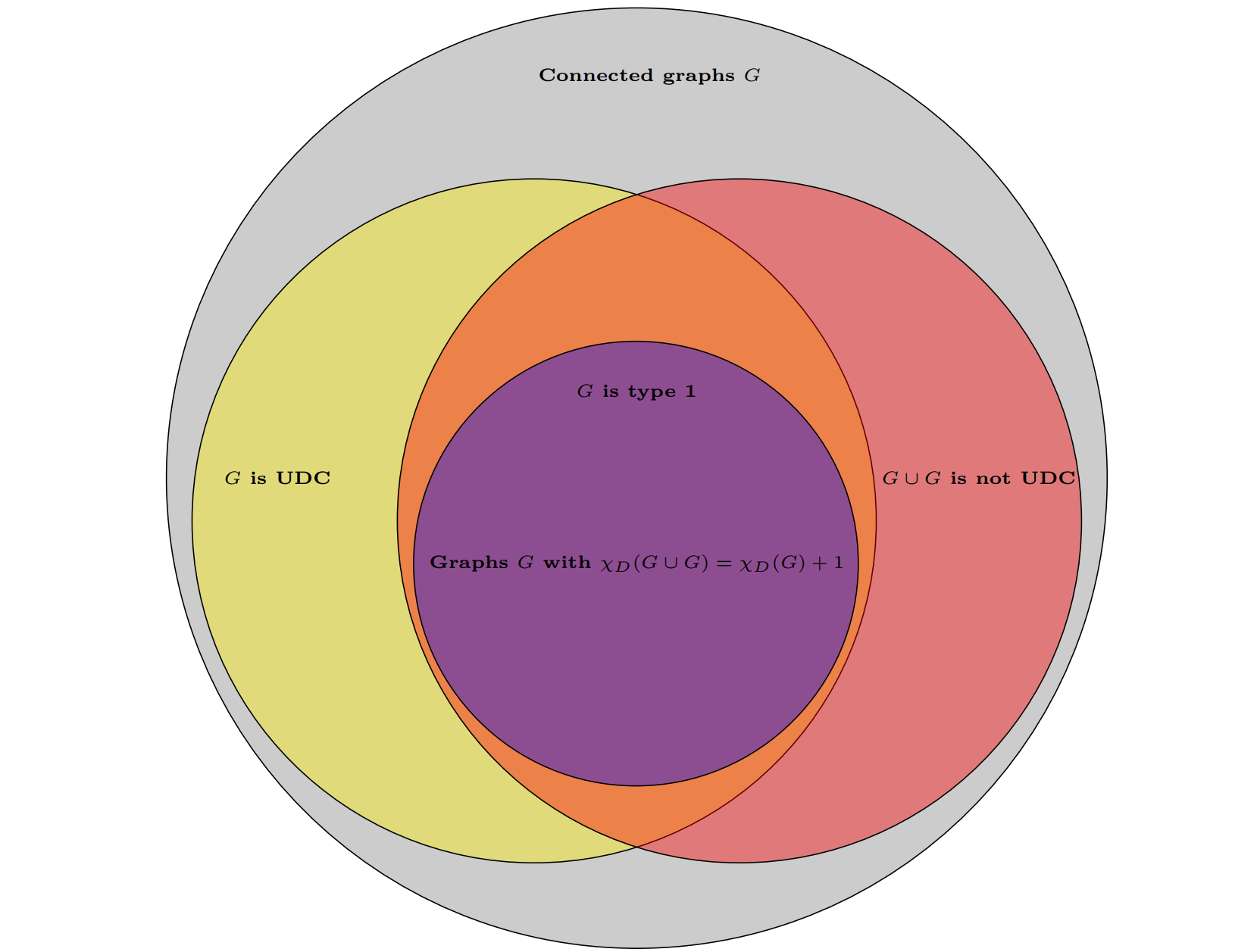}
\end{center}
\end{figure} 
In this paper, we show that the study and classification of uniquely distinguishing colorable (UDC for short) graphs help to calculate the distinguishing chromatic number of disconnected graphs.  
In Section 2, we show that  the concepts of unique colorability and unique distinguishing colorability in a graph are different. However, these two concepts are not unrelated. We have determined the unique colorability of the distinguishing coloring for some known graphs.
In Section 3, we define two types of uniquely distinguishing colorable graphs, named type 1 and type 2, and prove that 
if $ G $ is a non-empty uniquely distinguishing colorable graph, then $ G $ is connected or isomorphic to the union of two isomorphic graphs of type 2.
In Section 4, we study the properties and classification of uniquely distinguishing colorable bipartite graphs and specifically trees. 
Moreover,  we show that any uniquely distinguishing $n$-colorable tree with $ n \geq 3$ is a star graph. 
In Section 5, we prove that if 
$G$ 
is a connected graph, then 
$\chi_D(G\cup G)=\chi_D(G)+1$ 
if and only if
$G$ 
is uniquely distinguishing colorable of type 1. 
In this section, we investigate the family of connected graphs $G$ that $\chi_{D}(G\cup G) = \chi_{D}(G)+1 = 3$.
Moreover, we characterize all graphs 
$G$ of order $n$ with property that $\chi_{D}(G\cup G) = \chi_{D}(G) = k$, where $k=n-2, n-1, n$. 
Also, we determine all graphs 
$G$ of order $n$ with the property that $\chi_{D}(G\cup G) = \chi_{D}(G)+1 = \ell$, where $\ell=n-1, n, n+1$. 
The results obtained for connected graphs are given above in the form of a Venn diagram. 

\section{General results}
First, we provide  two examples which show  that the concepts of unique colorability  and  unique distinguishing colorability in a graph are different. To this end, consider the graph $ G $
in Figure 1. It  is a connected bipartite graph, and so it is uniquely $ 2 $-colorable. However, it has two distinguishing colorings as follows:
\begin{align*}
  [\chi_{D}(G)]_{1} &= \{ \{a, d \}, \{b, e\}, \{c\} \},  \\
[ \chi_{D}(G)]_{2}&= \{ \{a, d \}, \{c, e\}, \{b\} \}.
\end{align*}
Hence it is not uniquely distinguishing colorable. 
\begin{figure}[h!]
\centering
\begin{tikzpicture}[scale=.8, thick, vertex/.style={scale=.5, ball color=black, circle, text=white, minimum size=.2mm},
Ledge/.style={to path={
.. controls +(45:2) and +(135:2) .. (\tikztotarget) \tikztonodes}}
]
\node[label={[label distance=.1cm]180:$a$‌}] (a1) at (180:1) [vertex] {};‌
\node[label={[label distance=.1cm]90:$b$‌}] (a2) at (90:1) [vertex] {};‌
\node[label={[label distance=.1cm]90:$d$‌}] (a3) at (0:1) [vertex] {};‌
\node[label={[label distance=.1cm]-90:$c$}] (a4) at (-90:1) [vertex] {};‌
\node[label={[label distance=.1cm]90:‌$e$}] (a5) at (0:2.4) [vertex] {};‌
\draw(a1) to(a2) to (a3) to (a4) to (a1) ;
\draw(a3) to (a5);
\end{tikzpicture}
\begin{center}
\caption{\ A uniquely colorable graph $G$ that is not uniquely distinguishing colorable.}
\end{center}
\end{figure}
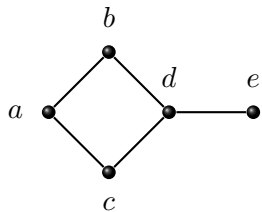

Next, consider the graph $H$ in Figure 2. Clearly
\begin{align*}
 \left[  \chi(H) \right]_{1} &= \{ \{a_{1}, a_{3} \}, \{a_{2}, a_{5}\}, \{a_{4}, a_{6}\} \}, \ \T{and} \\
 \left[  \chi(H) \right]_{2} &=  \{ \{a_{1}, a_{6} \}, \{a_{3}, a_{4}\}, \{a_{2}, a_{5}\} \} = \left[  \chi_{D}(H) \right]. 
\end{align*}
It is easy to see that $ H $ has the unique distinguishing color class $ \left[  \chi_{D}(H) \right] $. Hence,  
$ H $
is uniquely distinguishing colorable but it is not uniquely colorable.
\begin{center}
\begin{figure}[h!]
\centering
\begin{tikzpicture}[scale=.8, thick, vertex/.style={scale=.5, ball color=black, circle, text=white, minimum size=.2mm},
Ledge/.style={to path={
.. controls +(45:2) and +(135:2) .. (\tikztotarget) \tikztonodes}}
]
\node[label={[label distance=.1cm]-90:$a_{4}$‌}] (a1) at (0,0) [vertex] {};‌
\node[label={[label distance=.1cm]-90:$a_{5}$‌}] (a2) at (2,0) [vertex] {};‌
\node[label={[label distance=.1cm]-90:$a_{6}$‌}] (a3) at (4,0) [vertex] {};‌
\node[label={[label distance=.1cm]90:$a_{1}$‌}] (a4) at (0,2) [vertex] {};‌
\node[label={[label distance=.1cm]90:$a_{2}$‌}] (a5) at (2,2) [vertex] {};‌
\node[label={[label distance=.1cm]90:$a_{3}$‌}] (a6) at (4,2) [vertex] {};‌
\draw(a4) to(a1) to (a2) to (a3) ;
\draw(a4) to (a5) to (a6);
\draw(a5) to (a1);
\draw(a5) to (a3);
\draw(a6) to (a3);
\end{tikzpicture}
\begin{center}
\caption{\ A uniquely distinguishing colorable graph $H$ that is not uniquely colorable.}
\end{center}
\end{figure}
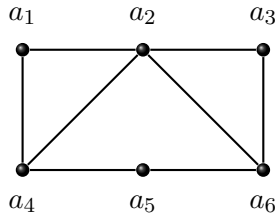
\end{center}

\begin{remark}\label{rem1}
\begin{itemize}
\item[(1)]  For positive integers $n$ and $m$, the graphs $ K_{n} $,
 $ \overline{K_{n}}$, 
 $ K_{n,m} $ and $ P_{2n} $ are uniquely distinguishing colorable. But $ C_{n} $ (for  $ n \geq 5 $) and  $ P_{2n+1} $ (for  $ n \geq 2 $) are not uniquely distinguishing colorable.
\item[(2)]  Let  $ G $ be a uniquely colorable graph  and  $ D(G) = 1$. Then $ G $ is uniquely distinguishing colorable and 
 $ \chi_{D}(G) = \chi (G) $.
 \item[(3)] Let  $ G $ be a connected graph and  $ \chi_{D}(G) = 2$. Then $ G $ is a uniquely distinguishing colorable.
\end{itemize}
\end{remark}
 
 \begin{proposition}
 Let  $G$ be a uniquely distinguishing $n$-colorable graph and 
 $n \neq \vert V(G) \vert$. Then  $G$ is uniquely distinguishing colorable with $\chi_{D}(G) = n$.
 \end{proposition}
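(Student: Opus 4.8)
The plan is to reduce everything to the single claim that $\chi_D(G) = n$: once this holds, the unique distinguishing $n$-coloring is automatically a distinguishing coloring with the smallest possible number of colors, and so $G$ is uniquely distinguishing colorable by definition. Since a distinguishing $n$-coloring exists we already have $\chi_D(G) \le n$, and since every color class is non-empty we have $n \le |V(G)|$; together with the hypothesis $n \ne |V(G)|$ this yields the strict inequality $n < |V(G)|$. This is the only point at which the hypothesis is used, and it is genuinely needed: the all-singletons partition is always a distinguishing $|V(G)|$-coloring and is the unique partition into $|V(G)|$ blocks, so ``uniquely distinguishing $|V(G)|$-colorable'' is vacuously true and says nothing about $\chi_D(G)$. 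It therefore remains to exclude $\chi_D(G)=m$ for every $m<n$.

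First I would record two monotonicity facts about refinements. If a partition $R$ refines a partition $Q$ (every block of $R$ lies inside a block of $Q$), then: (i) if $Q$ is a proper coloring so is $R$, since two vertices in one block of $R$ lie in one block of $Q$ and hence are non-adjacent; and (ii) if $Q$ is distinguishing so is $R$, since any automorphism fixing every block of $R$ setwise also fixes every block of $Q$ (each $Q$-block being a union of $R$-blocks) and so must be the identity. Thus every refinement of a proper distinguishing coloring is again a proper distinguishing coloring.

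Now suppose toward a contradiction that $\chi_D(G)=m$ with $m<n$, and fix a distinguishing $m$-coloring $Q$ with block sizes $w_1,\dots,w_m$, where $\sum_i w_i = N := |V(G)|$. The heart of the argument is the purely combinatorial claim that, because $m<n<N$, the partition $Q$ admits at least two distinct refinements into exactly $n$ blocks. To prove it I would encode a refinement by the numbers $k_i$ of sub-blocks into which $W_i$ is split, so $1\le k_i\le w_i$ and $\sum_i k_i = n$; the number of refinements then equals $\sum_{(k_i)}\prod_i S(w_i,k_i)$, where $S$ is the Stirling number of the second kind. At least one admissible sequence $(k_i)$ exists because $n$ lies in the attainable range $[m,N]$. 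If two admissible sequences exist we are done; otherwise the sequence is unique, and uniqueness would force $\prod_i S(w_i,k_i)=1$, i.e.\ $k_i\in\{1,w_i\}$ for every $i$. But then $n-m=\sum_{k_i=w_i}(w_i-1)>0$ produces an index $\ell$ with $k_\ell=w_\ell\ge 2$, while $N-n=\sum_{k_i=1}(w_i-1)>0$ produces an index $j\ne\ell$ with $k_j=1$ and $w_j\ge 2$; decreasing $k_\ell$ by one and increasing $k_j$ by one then gives a second admissible sequence, a contradiction. Hence $Q$ has at least two distinct refinements into $n$ blocks.

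By the two monotonicity facts these two refinements are distinct proper distinguishing $n$-colorings of $G$, contradicting the assumption that $G$ is uniquely distinguishing $n$-colorable. Therefore no distinguishing coloring with fewer than $n$ colors exists, so $\chi_D(G)=n$ and $G$ is uniquely distinguishing colorable. The step that needs real care is the combinatorial claim: the two sub-cases (a non-unique size-sequence versus a forced interior split) must be kept straight, and the inequality $n<N$ — equivalently the hypothesis $n\ne|V(G)|$ — is precisely what guarantees the index $j$ with $w_j\ge 2$ and $k_j=1$ that makes the swap possible.
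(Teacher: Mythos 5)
Your proof is correct, and its overall strategy coincides with the paper's: observe $\chi_D(G)\le n<|V(G)|$, assume toward a contradiction that a distinguishing coloring with $m<n$ colors exists, and refine it in two distinct ways into distinguishing $n$-colorings, contradicting unique distinguishing $n$-colorability. Where you genuinely differ is in how the two refinements are manufactured. The paper is constructive and minimal: it fixes one representative $x_i$ in each of the $m$ color classes, and from the remaining $|V(G)|-m$ vertices (of which there are strictly more than $n-m$, precisely because $n<|V(G)|$) it picks a subset of size $n-m$ to receive the fresh colors $m+1,\dots,n$ as singletons; two different choices of subset give two different distinguishing $n$-colorings. You instead prove a general counting lemma --- any partition into $m$ blocks of a set of size $N$ with $m<n<N$ admits at least two distinct refinements into exactly $n$ blocks --- by encoding refinements through admissible sequences $(k_i)$ and Stirling numbers $S(w_i,k_i)$, with a swap argument disposing of the degenerate case $\prod_i S(w_i,k_i)=1$. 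Your route is heavier than needed (the paper's singleton-peeling is a two-line special case of your lemma), but it buys two things: a reusable statement about refinements in general, and, more importantly, explicit proofs of the monotonicity facts that a refinement of a proper coloring is proper and a refinement of a distinguishing coloring is distinguishing --- facts the paper's proof uses only tacitly when it asserts that its two new $n$-colorings are again distinguishing. Both proofs correctly isolate $n\ne|V(G)|$ as the hypothesis guaranteeing the second refinement, and your remark that unique distinguishing $|V(G)|$-colorability is vacuous is a worthwhile observation the paper does not make.
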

 \begin{proof}
 Clearly $  \chi_{D}(G)  \leq n < \vert V(G) \vert $. Assume that 
 $  \chi_{D}(G) < n $  and seek a contradiction. Suppose that 
 $[\chi_{D}(G)] = \{V_{1}, V_{2}, \ldots , V_{\chi_{D}(G)}\} $ and 
choose the set 
 $ \{x_{1}, x_{2}, \ldots ,x_{\chi_{D}(G)}\}$
of vertices of $G$ such that $ x_{i} \in V_{i}$, for each $i$, $1\leq i\leq X_{D}(G)$. 
Clearly there are at least two subsets of $V(G)$ of cardinality
$ n - \chi_{D}(G) $
from the remaining vertices.
Now, each time we can color one of these subsets with 
$ \chi_{D}(G)+1, \chi_{D}(G)+2, \ldots , n  $
and  the remaining vertices with their colors given in 
$ \left[  \chi_{D}(G) \right]$. 
So we obtain at least two different colorings with $n$ colors of $G$, which is a contradiction. 
Therefore $  \chi_{D}(G) =n$.
\end{proof} 

\section{Disconnected graphs} 
In 1968, Cartwright and Harary \cite{Cartwright} showed that the study of unique colorability is limited to connected graphs. In fact, they proved that every uniquely colorable graph is connected. In this section, we show that there exists some family of disconnected uniquely distinguishing colorable graphs, and unlike proper coloring, the study of unique distinguishing colorability of disconnected graphs is challenging.

Let  $ G $ be a uniquely distinguishing colorable graph with the unique coloring
$ \left[  \chi_{D}(G) \right] = \{V_{1}, V_{2}, \ldots ,V_{n}\} $. Clearly,  in a  distinguishing $n$-coloring of  $G$, one can assign $n$ colors to $V(G)$ with $n!$ different ways. We call each of these assigning ways a labeling of the distinguishing color classes. 
For the sake of convenience, in the following definition, we introduce two types of uniquely distinguishing colorable graphs.
\begin{definition}
Let  $ G $ be a connected uniquely distinguishing colorable graph. 
We say that $G$ has type 1 if for any labeling of the distinguishing color classes of components of
$G \cup G$ 
with 
$\chi_{D}(G)$ 
colors, 
there exists an automorphism which embeds one component into the other and preserves all the distinguishing color class labels. Otherwise, we say that  
$G$
is of type 2.
\end{definition}
For instance, the graph in Figure 3(a) is of type 2, and the graph in Figure 3(b) is of type 1.

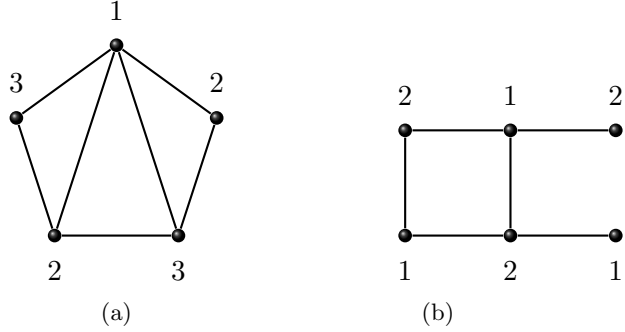
\begin{figure}[h!] \hspace{35mm}
\subfloat[]{\begin{tikzpicture}[scale=.7,thick, vertex/.style={scale=.5, ball color=black, circle, text=white, minimum size=.2mm},
Ledge/.style={to path={
.. controls +(45:2) and +(135:2) .. (\tikztotarget) \tikztonodes}}
]
\node[label={[label distance=.1cm]90:$ 2 $‌}] (a1) at (18:2) [vertex] {};‌
\node[label={[label distance=.1cm]90:$ 1 $‌}] (a2) at (90:2) [vertex] {};‌
\node[label={[label distance=.1cm]90:$ 3 $}] (a3) at (162:2) [vertex] {};‌
\node[label={[label distance=.1cm]-90:$ 2 $‌}] (a4) at (234:2) [vertex] {};‌
\node[label={[label distance=.1cm]-90:$ 3 $‌}] (a5) at (306:2) [vertex] {};‌
\draw(a1) to (a2) to (a3) to (a4) to (a5) to (a1);
\draw(a4) to (a2) to (a5);
\end{tikzpicture}}
\subfloat[]{
\hspace{18mm}
\begin{tikzpicture}[scale=.7,thick, vertex/.style={scale=.5, ball color=black, circle, text=white, minimum size=.2mm},
Ledge/.style={to path={
.. controls +(45:2) and +(135:2) .. (\tikztotarget) \tikztonodes}}
]
\node[label={[label distance=.1cm]-90:$1$‌}] (a1) at (0,0) [vertex] {};‌
\node[label={[label distance=.1cm]-90:$ 2 $‌}] (a2) at (2,0) [vertex] {};‌
\node[label={[label distance=.1cm]-90:$ 1 $‌}] (a3) at (4,0) [vertex] {};‌
\node[label={[label distance=.1cm]90:$ 2 $‌}] (a4) at (0,2) [vertex] {};‌
\node[label={[label distance=.1cm]90:$ 1 $‌}] (a5) at (2,2) [vertex] {};‌
\node[label={[label distance=.1cm]90:$ 2 $‌}] (a6) at (4,2) [vertex] {};‌
\draw(a4) to(a1) to (a2) to (a3) ;
\draw(a4) to (a5) to (a6);
\draw(a2) to (a5);

\end{tikzpicture}
}
\begin{center}
\caption{\ Uniquely distinguishing colorable graphs of of type 2 and type 1.}
\end{center}
\end{figure} 
We say that two colorings 
$\left[  \chi_{D}(G) \right]_{1}$ 
and 
$\left[  \chi_{D}(G) \right]_{2}$ 
are isomorphic, if we color each component of 
$G \cup G$ 
with one of 
$\left[  \chi_{D}(G) \right]_{1}$ 
and 
$\left[  \chi_{D}(G) \right]_{2}$, 
for any arbitrary labeling of  color classes of
$\left[  \chi_{D}(G) \right]_{1}$,  
there is a labeling of  color classes of
$\left[  \chi_{D}(G) \right]_{2}$,  
and an automorphism such that images one component of 
$G \cup G$ 
into the other and preserves all the distinguishing color class labels. 

For an integer 
$n$, 
let 
$\mathop{\cup}\limits^{n} G$
denote the disjoint union of 
$n$ 
copies of 
$G$.

\begin{lemma}\label{lem1}
Let 
$G$ 
be a connected graph. For a positive integer 
$n$, 
if 
$\mathop{\cup}\limits^{n} G$ 
is a uniquely distinguishing colorable graph, then 
$G$ is uniquely distinguishing colorable.
\end{lemma}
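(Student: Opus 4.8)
The plan is to argue directly. Assume $H:=\mathop{\cup}\limits^{n}G$ is uniquely distinguishing colorable; the case $n=1$ is trivial, so take $n\ge 2$. Let $\Phi$ be the unique minimum distinguishing coloring of $H$, with $m=\chi_D(H)$ classes, and set $k=\chi_D(G)$ and $A=|\mathrm{Aut}(G)|$. I record two facts used throughout. First, the restriction of $\Phi$ to any copy is a distinguishing coloring of that copy: an internal color-preserving automorphism of one copy extends by the identity to a non-trivial automorphism of $H$ fixing every class of $\Phi$. Second, two copies can be interchanged by a color-preserving automorphism of $H$ only when their induced colorings are isomorphic as vertex-colored graphs, hence in particular only when they use the same number of colors.

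First I would exploit the uniqueness of $\Phi$. For any automorphism $g$ of $H$, the image $g(\Phi)$ is again a minimum distinguishing coloring, so by uniqueness $g(\Phi)=\Phi$; that is, every automorphism of $H$ permutes the classes of $\Phi$. Applying this to the canonical automorphism that swaps two copies shows that all $n$ copies induce the \emph{same} partition $P$ of $V(G)$ (they differ only by a relabeling of colors), and applying it to an automorphism acting on a single copy shows that every automorphism of $G$ permutes the classes of $P$. Consequently $\mathrm{Aut}(G)$ acts on the classes of $P$ with full image, and counting the non-swappable isomorphism classes of $P$-labelings that fit into $m$ colors gives the key inequality $m!/(m-|P|)!\ge nA$.

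It then remains to show that $P$ is the \emph{unique minimum} distinguishing partition of $G$, and here the main work lies. The tool is to replace the coloring of a single copy in $\Phi$ and derive a contradiction with either the minimality of $m$ or the uniqueness of $\Phi$. To see $|P|=k$: if $|P|>k$, recolor one copy by a minimum ($k$-color) distinguishing coloring of $G$; since its number of colors now differs from that of the other copies, it is automatically non-swappable with them, so the result is a distinguishing coloring of $H$ using at most $m$ colors whose partition differs from $\Phi$ — a contradiction. To see uniqueness: if $Q\neq P$ were a second minimum distinguishing partition, recolor one copy by $Q$ using a labeling chosen to avoid the at most $A(n-1)$ labelings that would make it swappable with one of the remaining copies; such a labeling exists because $m!/(m-k)!\ge nA>A(n-1)$, and one again obtains a distinguishing coloring of $H$ with at most $m$ colors and a different partition, contradicting uniqueness. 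Hence $G$ has a unique minimum distinguishing partition and is uniquely distinguishing colorable.

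The step I expect to be the crux is controlling the number of colors of the union: proving that its unique optimal coloring must place a common minimum distinguishing partition on every copy. The swap-automorphism argument delivers the common partition cheaply, but ruling out an economical \emph{non}-minimum partition on the copies, and ruling out a competing minimum partition, is exactly where the replace-one-copy technique and the bound $m!/(m-k)!\ge nA$ are needed; without the observation that a mismatch in color count (or in labeling) forces non-swappability, these replacements could fail to remain distinguishing.
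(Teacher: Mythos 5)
Your proof is correct, but it takes a genuinely different route from the paper's. The paper argues by contraposition: assuming $G$ has two distinguishing partitions $\left[\chi_D(G)\right]_1$ and $\left[\chi_D(G)\right]_2$, it builds two distinct minimum distinguishing colorings of $\mathop{\cup}\limits^{n} G$, splitting into cases according to whether the two colorings are isomorphic and whether both actually appear among the components, and using a replace-one-component trick (for an arbitrary labeling of the unused coloring, at most one component can be color-isomorphic to it, and that component's coloring is swapped out). You instead argue directly from the uniqueness of the optimal coloring $\Phi$ of the union: the swap automorphisms force every copy to induce one common partition $P$; recoloring one copy with a $k$-color distinguishing coloring (non-swappable because the color counts differ) forces $|P|=\chi_D(G)$; and an orbit count supplies a labeling of any competing minimum partition $Q$ that avoids all of the at most $A(n-1)$ swappable labelings. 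Your approach is quantitatively sharper: it recovers in passing that each copy carries a minimum distinguishing partition (essentially the content of the paper's Lemma \ref{lem2}, which the paper proves separately), and the inequality $m!/(m-|P|)!\ge nA$ is exactly the kind of bound that underlies Lemma \ref{lem4}; the paper's argument is shorter and avoids counting, at the cost of a somewhat delicate case analysis built on its notion of isomorphic colorings. Two spots in your write-up deserve a line of justification: ``$\mathrm{Aut}(G)$ acts on the classes of $P$ with full image'' should read \emph{faithfully} (the kernel is trivial precisely because $P$ is distinguishing, and surjectivity onto the symmetric group on the blocks is neither true in general nor needed); and the step from ``the $n$ copies lie in $n$ distinct equivalence classes of labelings'' to $m!/(m-|P|)!\ge nA$ requires that every orbit of the labeling action has size exactly $A$ --- this freeness holds because labelings are injective and only the identity automorphism fixes every block of the distinguishing partition $P$, but you use it silently, and without it the count would only give $m!/(m-|P|)!\ge n$.
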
 
\begin{proof} 
Let 
$n\geq 2$. 
If 
$G$ 
is not uniquely distinguishing colorable, then it has at least two partitions of its vertices, as 
$\left[  \chi_{D}(G) \right]_{1}$
and 
$\left[  \chi_{D}(G) \right]_{2}$, 
into distinguishing colorings. 
If 
$\left[  \chi_{D}(G) \right]_{1}$
and 
$\left[  \chi_{D}(G) \right]_{2}$ 
are not isomorphic, then we can consider two components of 
$\mathop{\cup}\limits^{n} G$ 
and color each component with one of the distinguishing colorings of $G$ each time. 
Therefore, a different partition of 
$\mathop{\cup}\limits^{n} G$  
is achieved each time, which forms a distinguishing coloring.
If 
$\left[  \chi_{D}(G) \right]_{1}$
and 
$\left[  \chi_{D}(G) \right]_{2}$ 
are isomorphic, then consider the distinguishing coloring of 
$\mathop{\cup}\limits^{n} G$. 
If both 
$\left[  \chi_{D}(G) \right]_{1}$
and 
$\left[  \chi_{D}(G) \right]_{2}$ 
are used in the distinguishing coloring of components of 
$\mathop{\cup}\limits^{n} G$, 
we do the same as above. 
Note that in this case there is a labeling of distinguishing color classes of 
$\left[  \chi_{D}(G) \right]_{1}$
and 
$\left[  \chi_{D}(G) \right]_{2}$ 
such that there is no color-preserving automorphism that image 
$\left[  \chi_{D}(G) \right]_{1}$
to 
$\left[  \chi_{D}(G) \right]_{2}$. 
Otherwise, at most one of 
$\left[  \chi_{D}(G) \right]_{1}$
and 
$\left[  \chi_{D}(G) \right]_{2}$ 
is used in the distinguishing coloring of components of
$\mathop{\cup}\limits^{n} G$. 
Assume that 
$\left[  \chi_{D}(G) \right]_{1}$
is not used in the distinguishing coloring of 
$\mathop{\cup}\limits^{n} G$. 
Consider an arbitrary labeling of distinguishing color classes of 
$\left[  \chi_{D}(G) \right]_{1}$. 
There is at most one component of 
$\mathop{\cup}\limits^{n} G$ 
such that its coloring which image with an automorphism to the 
$\left[  \chi_{D}(G) \right]_{1}$ 
with its considered labeling and preserves the labels.
Replace the coloring of this component with  
$\left[  \chi_{D}(G) \right]_{1}$ 
and its considered labeling.
Thus we have another distinguishing coloring for 
$\mathop{\cup}\limits^{n} G$.
This means that 
$\mathop{\cup}\limits^{n} G$ 
has at least two distinguishing colorings and the result follows. 
\end{proof}

\begin{lemma}\label{lem2}
Let 
$G$ 
be a connected graph.
For a positive integer 
$n$, 
if 
$\mathop{\cup}\limits^{n} G$ 
is a uniquely distinguishing colorable, then 
$\chi_{D}(\mathop{\cup}\limits^{n} G)=\chi_{D}(G)$.
\end{lemma}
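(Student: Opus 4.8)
The plan is to prove the two inequalities $\chi_{D}(\mathop{\cup}\limits^{n} G)\ge \chi_{D}(G)$ and $\chi_{D}(\mathop{\cup}\limits^{n} G)\le \chi_{D}(G)$ separately; only the second uses the hypothesis. For the lower bound I would take an arbitrary distinguishing coloring of $\mathop{\cup}\limits^{n} G$ and restrict it to one copy $C\cong G$. If this restriction were not distinguishing on $C$, a non-trivial automorphism of $C$ fixing every color class would extend by the identity on the other copies to a non-trivial color-preserving automorphism of $\mathop{\cup}\limits^{n} G$, a contradiction; hence the restriction is distinguishing and uses at least $\chi_{D}(G)$ colors, so the whole coloring does too. (The case $n=1$ is trivial, so assume $n\ge 2$.)

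For the reverse inequality I would argue by contradiction. Put $k=\chi_{D}(G)$ and suppose $m:=\chi_{D}(\mathop{\cup}\limits^{n} G)>k$. By Lemma~\ref{lem1}, $G$ is uniquely distinguishing colorable, so it has a single minimal color partition. Let $c$ be the (unique, by hypothesis) minimal distinguishing coloring of $\mathop{\cup}\limits^{n} G$, and let $c_{1},\dots ,c_{n}$ be the colorings it induces on the $n$ copies. Each $c_{i}$ is a distinguishing coloring of $G$, and the colored copies must be pairwise non-isomorphic: otherwise an isomorphism carrying $c_{a}$ onto $c_{b}$ label-for-label would yield, together with its inverse, a non-trivial color-preserving automorphism of $\mathop{\cup}\limits^{n} G$, contradicting that $c$ is distinguishing. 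The aim is now to manufacture a second minimal coloring and thereby contradict unique distinguishing colorability.

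The engine is that every permutation of the mutually isomorphic copies is an automorphism of $\mathop{\cup}\limits^{n} G$, hence sends the minimal coloring $c$ to a minimal coloring, which by uniqueness must coincide with $c$ as a partition. First I would check whether transposing two copies changes the partition: if it does for some pair, that transposed partition is a second minimal coloring and we are done. Otherwise $c$ is invariant under every transposition of copies, which forces $c_{1},\dots ,c_{n}$ to be relabelings of one common distinguishing coloring $P$ of $G$. If $P$ has more than $k$ parts it is non-minimal, so $G$ admits a distinguishing coloring $Q$ using exactly $k$ colors, which is fewer than the number of parts of $P$; repainting a single copy by $Q$ (with colors drawn from the existing palette) keeps the coloring distinguishing, since a copy colored by $Q$ can be neither swapped with nor color-isomorphic to a copy colored by $P$ (different number of color classes), and it produces either a coloring with fewer than $m$ colors (contradicting minimality) or a second minimal coloring (contradicting uniqueness). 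If instead $P$ has exactly $k$ parts, then $P$ is the unique minimal coloring of $G$ and, because $m>k$, the palette is strictly larger than the $k$ colors any one copy uses; repainting one color class of one copy with a color not used on that copy yields a distinct distinguishing coloring of $\mathop{\cup}\limits^{n} G$ with at most $m$ colors, again contradicting minimality or uniqueness. In every case we reach a contradiction, so $m=k$.

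I expect the main obstacle to be the symmetric case, where $c$ is invariant under the full symmetric group acting on the copies. There one must (i) turn the invariance of the partition under each transposition into the clean statement that all induced colorings share a single underlying partition $P$, keeping track of the color permutation that witnesses each invariance, and (ii) in the sub-case where $P$ is already minimal, verify that the repainting of one color class can be carried out so as to avoid accidentally creating a color-isomorphism between the altered copy and one of the others. Controlling this collision-avoidance, and cleanly separating the sub-cases according to whether the copies share a common palette, is the delicate part of the argument.
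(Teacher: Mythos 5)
Your overall strategy is the paper's strategy: assume $\chi_D(\mathop{\cup}\limits^{n}G)>\chi_D(G)=k$, invoke Lemma \ref{lem1} to get the unique minimal distinguishing coloring of $G$, and contradict the unique distinguishing colorability of $\mathop{\cup}\limits^{n}G$ by manufacturing a second minimal distinguishing partition. Your two main moves are also the paper's two moves: swapping the colorings of two copies (the paper does this for two components whose color sets differ) and repainting one copy with the unique $k$-coloring of $G$ (the paper does this when all components use the same color set). In places you are more careful than the paper: you prove the lower bound explicitly, you record that the colored copies must be pairwise non-label-isomorphic, and you correctly insist that a swap yields a contradiction only if it changes the \emph{partition} — a point the paper asserts without justification, and which genuinely depends on the choice of isomorphism (e.g.\ for $2K_2$ with edge colorings $(1,2)$ and $(1,3)$, swapping via the isomorphism matching the color-$1$ endpoints leaves the partition unchanged, while the other isomorphism changes it).

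But this extra care opens a sub-case that you do not close, and that is a genuine gap. In your transposition-invariant branch with $|P|=k$, you repaint one color class of one copy with a color unused on that copy and claim a contradiction with minimality or uniqueness; as you yourself flag, the repainted copy may become label-isomorphic to another copy, in which case swapping those two copies is a nontrivial color-preserving automorphism and the new coloring is not distinguishing at all, so no contradiction is obtained. The danger is concrete: with $n=2$ and $m=k+1$, each copy misses exactly one palette color, and repainting the class of copy $1$ colored by the unique element of $S_1\setminus S_2$ with the unique element of $S_2\setminus S_1$ gives a copy with color set $S_2$, possibly coinciding with copy $2$'s coloring up to an automorphism of $G$. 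A counting argument (each other copy blocks at most one (class, fresh color) choice, since collision forces $|S_i\setminus S_j|=1$ and pins down both the class and the color) would need $k(m-k)>n-1$, and no bound on $n$ is available here — Lemma \ref{lem4} ($n\le 2$) is proved \emph{after} this lemma and uses it, so invoking it is circular. Nor can the sub-case be dismissed as vacuous: for $G=K_1$ one has $\mathop{\cup}\limits^{n}K_1=\overline{K_n}$, which is uniquely distinguishing colorable by Remark \ref{rem1} although $\chi_D(\overline{K_n})=n>1=\chi_D(K_1)$; there every repaint collides and the statement itself fails, showing both your argument and the paper's tacitly exclude this degenerate case. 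In short: you surfaced a real subtlety (the swap-preserves-partition scenario) that the paper's terser proof silently assumes away, but isolating it is not resolving it, and as written your proof is incomplete exactly there, while the paper's argument disposes of all cases with its two recolorings and never enters that branch.
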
 
\begin{proof}
For the sake of contradiction, assume that 
$\chi_{D}(\mathop{\cup}\limits^{n} G)>\chi_{D}(G)$. 
We claim that there exist two components of 
$\mathop{\cup}\limits^{n} G$ 
with at least one different color in the distinguishing coloring of 
$\mathop{\cup}\limits^{n} G$. 
For this purpose, suppose that exactly 
$\chi_{D}(\mathop{\cup}\limits^{n} G)$ 
colors are used in each component. 
Consider the coloring of one component of 
$\mathop{\cup}\limits^{n} G$
and replace it with the unique distinguishing coloring of graph 
$G$. 
Therefore, we will have another distinguishing coloring for 
$\mathop{\cup}\limits^{n} G$. 
Now, we can select two components 
$G_1$ 
and 
$G_2$ 
of 
$\mathop{\cup}\limits^{n} G$
such that the colors used in them are not the same.  Interchanging the distinguishing colorings of 
$G_1$ 
and 
$G_2$, 
gives us a new distinguishing coloring for 
$\mathop{\cup}\limits^{n} G$, 
a contradiction.
\end{proof}
\begin{lemma}\label{lem3}
Let 
$G$ 
be a graph of type 1. Then 
$\chi_{D}(G \cup G) =\chi_{D}(G)+1$.
\end{lemma}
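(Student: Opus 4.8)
The plan is to prove the two inequalities $\chi_D(G\cup G)\le \chi_D(G)+1$ and $\chi_D(G\cup G)\ge \chi_D(G)+1$ separately. Throughout, write $k=\chi_D(G)$ and let $[\chi_D(G)]=\{V_1,\dots,V_k\}$ be the unique distinguishing coloring of $G$, which is available since a graph of type 1 is by definition connected and uniquely distinguishing colorable. Denote the two components of $G\cup G$ by $G_1$ and $G_2$.

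For the upper bound I would display an explicit distinguishing coloring of $G\cup G$ with $k+1$ colors. Color $G_1$ by $\{V_1,\dots,V_k\}$ with the colors $1,\dots,k$, and color $G_2$ by the same partition except that a single chosen vertex $v$ is recolored with a fresh color $k+1$. The coloring is proper on each component and hence on $G\cup G$. To see it is distinguishing, I would split the automorphisms of $G\cup G$ into those that fix each component setwise and those that interchange the two components. An automorphism of the first kind restricts to a color-preserving automorphism of each $G_i$: on $G_1$ it is the identity because $\{V_1,\dots,V_k\}$ is distinguishing, and on $G_2$ it must fix $v$ (the unique vertex of color $k+1$) and therefore preserves each of $V_1,\dots,V_k$ as well, so it is again the identity. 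An automorphism of the second kind would furnish a color-preserving bijection of $G_1$ onto $G_2$; but $G_2$ has a vertex of color $k+1$ and $G_1$ has none, so no such color-preserving bijection exists. Hence the coloring is distinguishing and $\chi_D(G\cup G)\le k+1$.

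For the lower bound I would argue by contradiction. First note $\chi_D(G\cup G)\ge \chi_D(G)$: if the restriction of a distinguishing coloring of $G\cup G$ to one component admitted a non-trivial color-preserving automorphism, extending it by the identity on the other component would contradict the coloring being distinguishing, so each component receives a distinguishing coloring of $G$ and thus at least $k$ colors. Suppose now $\chi_D(G\cup G)=k$ and fix a distinguishing $k$-coloring of $G\cup G$. Its restriction to each $G_i$ is a distinguishing coloring of $G$ using at most $k$ colors, hence exactly $k$; by unique distinguishing colorability each component therefore carries the partition $\{V_1,\dots,V_k\}$ under some labeling of its color classes. This is precisely a labeling of the distinguishing color classes of the two components with $k$ colors, so the type 1 hypothesis supplies an automorphism $\sigma$ of $G\cup G$ that embeds one component into the other and preserves every color class label. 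Such a $\sigma$ interchanges the components, hence is non-trivial, yet fixes the coloring, contradicting that the coloring is distinguishing. Thus $\chi_D(G\cup G)\ge k+1$, and together with the upper bound this yields $\chi_D(G\cup G)=\chi_D(G)+1$.

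The propriety checks and the inequality $\chi_D(G\cup G)\ge\chi_D(G)$ are routine. The conceptual heart, and the only place where type 1 enters, is the lower bound: once both components are forced onto the unique distinguishing partition, the defining property of type 1 directly manufactures a color-preserving swap of the two copies, which is exactly the obstruction that rules out a $k$-coloring. The one step that asks for a little care is checking that recoloring a single vertex of $G_2$ leaves that component's coloring distinguishing, so that no component-fixing symmetry slips through; this holds because the recolored vertex becomes a forced fixed point and the remaining classes still recover the original distinguishing partition.
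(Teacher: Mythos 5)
Your proof is correct and takes essentially the same route as the paper: the paper's (much terser) argument is exactly your lower bound, namely that unique distinguishing colorability forces the partition $\{V_1,\dots,V_k\}$ onto both components and the type 1 property then supplies a color-preserving automorphism exchanging them, so a new color is required. The upper bound $\chi_D(G\cup G)\le\chi_D(G)+1$, which you verify explicitly via the fresh color $k+1$ on a single vertex, is left implicit in the paper, so your write-up is a more complete rendering of the same argument.
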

\begin{proof}
Since 
$G$ 
is uniquely distinguishing colorable, there is only one partition of the vertex set
$V(G)$ 
into a distinguishing coloring. Because $G$ is of type 1, it follows that for any two labelings of this partition,
there exists an automorphism of $G$ which embeds one of them into the other and preserves all the distinguishing color class labels.
Hence, we need a new color to distinguish any two components of 
$G$.
\end{proof}

\begin{lemma}\label{lem4}
Let 
$G$ 
be a connected graph. For a positive integer 
$n$, 
if 
$\mathop{\cup}\limits^{n} G$ 
is a uniquely distinguishing colorable graph, then 
$n\leq 2$.
\end{lemma}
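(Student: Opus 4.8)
The plan is to argue by contradiction: assume $\mathop{\cup}\limits^{n} G$ is uniquely distinguishing colorable (UDC) with $n\geq 3$, and produce two \emph{distinct} minimum distinguishing colorings of $\mathop{\cup}\limits^{n} G$, contradicting uniqueness. Set $k:=\chi_{D}(\mathop{\cup}\limits^{n} G)$; by Lemma~\ref{lem2} we have $k=\chi_{D}(G)$.

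First I would pin down the structure of an arbitrary distinguishing $k$-coloring of $\mathop{\cup}\limits^{n} G$. Since $G$ is connected, every automorphism of $\mathop{\cup}\limits^{n} G$ permutes the $n$ components and maps each isomorphically onto its image; in particular, extending any automorphism of a single component $G_i$ by the identity elsewhere yields an automorphism of the whole union. Hence the restriction $c_i$ of the coloring to each component $G_i\cong G$ must itself be a distinguishing coloring, so it uses at least $\chi_{D}(G)=k$ colors; as only $k$ colors are available, each $c_i$ uses \emph{all} $k$ colors. Moreover, breaking the automorphisms that swap two components forces the colored graphs $(G_i,c_i)$ and $(G_j,c_j)$ to be non-isomorphic as colored graphs for every $i\neq j$, since otherwise a color-preserving swap would survive.

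Next I would construct a second coloring $\mathcal{P}'$ from the original one $\mathcal{P}$ by transporting $c_2$ onto $G_1$ and $c_1$ onto $G_2$ (via the canonical isomorphisms), leaving the colorings of $G_3,\dots,G_n$ unchanged. The first task is to verify that $\mathcal{P}'$ is again a distinguishing $k$-coloring: the multiset of component colorings is unchanged, merely reassigned, so every component still carries a distinguishing coloring of $G$ and the colorings on distinct components remain pairwise non-isomorphic. Consequently any color-preserving automorphism of $\mathop{\cup}\limits^{n} G$ must fix each component setwise and then act as the identity within it, so $\mathcal{P}'$ is indeed distinguishing.

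The crux, and the step I expect to be the main obstacle, is showing that $\mathcal{P}$ and $\mathcal{P}'$ are genuinely different \emph{partitions}, not merely isomorphic colorings; this is precisely the distinction on which the definition of UDC turns, so the whole argument fails if one conflates the two. Here is where $n\geq 3$ enters decisively. If $\mathcal{P}=\mathcal{P}'$, then there is a relabeling $\tau\in S_{k}$ of the colors carrying each color class of $\mathcal{P}$ onto the equal class of $\mathcal{P}'$. On an untouched component such as $G_3$ the two colorings coincide, so $\tau$ fixes every color occurring in $c_3$; since $c_3$ uses all $k$ colors, $\tau$ is the identity. But then on $G_1$ the identity relabeling identifies $c_1$ with the transported $c_2$, making the canonical isomorphism $G_1\to G_2$ color-preserving and contradicting the non-isomorphism of $(G_1,c_1)$ and $(G_2,c_2)$. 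Hence $\mathcal{P}\neq\mathcal{P}'$, so $\mathop{\cup}\limits^{n} G$ admits two distinct minimum distinguishing colorings, contradicting that it is UDC. Therefore $n\leq 2$.
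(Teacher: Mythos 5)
Your proposal is correct and takes essentially the same route as the paper's proof: argue by contradiction from Lemma~\ref{lem2}, exchange the colorings of two components $G_1$ and $G_2$, and use the untouched third component $G_3$ to witness that the resulting partition of $V(\mathop{\cup}\limits^{n} G)$ is genuinely new. Your write-up is in fact more careful than the paper's, which asserts without detail both that the swapped coloring remains distinguishing and that the partition actually changes --- exactly the two points settled by your pairwise-non-isomorphism observation and your $\tau=\mathrm{id}$ relabeling argument (and, by swapping whole colorings rather than labelings of a common partition, you also avoid the paper's appeal to Lemma~\ref{lem1}).
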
 
\begin{proof}
For the sake of contradiction, assume that 
$n\geq 3$. 
Let 
$G_1$, 
$G_2$ 
and 
$G_3$ 
be some components of 
$\mathop{\cup}\limits^{n} G$. 
Lemmas \ref{lem1}, \ref{lem2}, conclude that the colorings of 
$G_1$ 
and 
$G_2$ 
are the same with a different assigned label for some color classes. 
Let 
$V$ 
denote a color class of 
$\left[  \chi_{D}(G) \right]$ 
that is labelled by 
$1$ 
and 
$2$ 
in 
$G_1$ 
and 
$G_2$, 
respectively. Interchanging the labels of the color classes of 
$G_1$ 
and 
$G_2$, 
and keeping the coloring of 
$G_3$, 
give us another partition of vertices 
$\mathop{\cup}\limits^{n} G$ 
into a distinguishing coloring, a contradiction.
\end{proof}

\begin{theorem} \label{01}
Let $ G $ be a non-empty uniquely distinguishing colorable graph. Then $ G $ is connected or it is isomorphic to $H\cup H$, where $H$ is a connected graph of type $2$.
\end{theorem}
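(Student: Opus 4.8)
The plan is to treat the connected case as the first alternative and otherwise reduce the disconnected case to the situation governed by Lemmas~\ref{lem1}--\ref{lem4}. So assume $G$ is disconnected, write $G = G_1 \cup \cdots \cup G_k$ for its connected components with $k \geq 2$, and fix the unique distinguishing coloring $[\chi_D(G)]$; set $N := \chi_D(G)$ and note $N \geq 2$ since $G$ is non-empty. The first thing I would record is the standard description of $\mathrm{Aut}(G)$: every automorphism permutes the components within each isomorphism type and acts by isomorphisms along the resulting cycles. From this it follows that $[\chi_D(G)]$ is distinguishing if and only if (i) its restriction to each $G_i$ is a distinguishing coloring of $G_i$, and (ii) no two components admit a colour-preserving isomorphism, since any such isomorphism would give a colour-preserving transposition of two components, hence a non-trivial colour-preserving automorphism of $G$.

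The heart of the argument is to prove that all components are mutually isomorphic. I would argue by contradiction: assuming two components are non-isomorphic (or that some component, e.g.\ an isolated vertex, can be coloured with fewer than $N$ colours while another forces the full palette), I would manufacture a second distinguishing coloring with the same $N$ colours, contradicting unique distinguishing colorability. The mechanism is to relabel the colours on a single component $G_i$ by a permutation $\sigma \in S_N$ while leaving all other components fixed: this preserves property (i) on $G_i$, and since a component not isomorphic to $G_i$ can never become colour-isomorphic to it, property (ii) can be kept by choosing $\sigma$ to avoid collisions with the finitely many components that are isomorphic to $G_i$. For a suitable $\sigma$ the induced partition of $V(G)$ genuinely differs from $[\chi_D(G)]$, giving the contradiction. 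Carrying out this step uniformly across all cases — distinct isomorphism types, components with different colour profiles, and isolated-vertex components — so that the relabeled coloring is simultaneously new and still distinguishing, is where I expect the real work to lie, and it is the main obstacle of the proof.

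Once all components are isomorphic we have $G \cong \mathop{\cup}\limits^{k} H$ for some connected graph $H$ and some $k \geq 2$. Lemma~\ref{lem1} then yields that $H$ is uniquely distinguishing colorable, and Lemma~\ref{lem4} forces $k \leq 2$; since $G$ is disconnected, $k = 2$ and $G \cong H \cup H$.

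Finally I would pin down the type of $H$. Lemma~\ref{lem2} (with $n = 2$) gives $\chi_D(H \cup H) = \chi_D(H)$. If $H$ were of type $1$, then Lemma~\ref{lem3} would give $\chi_D(H \cup H) = \chi_D(H) + 1$, a contradiction; hence, by the dichotomy in the definition of type for connected uniquely distinguishing colorable graphs, $H$ must be of type $2$, which completes the proof.
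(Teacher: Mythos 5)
Your architecture is the same as the paper's: establish that the components are pairwise isomorphic by manufacturing a second distinguishing coloring, then use Lemmas~\ref{lem1} and~\ref{lem4} to force exactly two components, and Lemmas~\ref{lem2} and~\ref{lem3} to exclude type~1. The final two stages match the paper verbatim, and your ordering is arguably cleaner than the paper's (you prove isomorphism of the components before citing Lemmas~\ref{lem1} and~\ref{lem4}, which are stated only for $\mathop{\cup}\limits^{n} G$, whereas the paper invokes them first).

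The genuine gap is the central step, which you explicitly leave as a sketch (``where I expect the real work to lie''), and the sketch as stated has two unresolved problems. First, an arbitrary relabeling $\sigma \in S_N$ of the colours on one component need not produce a new partition of $V(G)$: if certain colours occur \emph{only} on $G_i$, then any $\sigma$ permuting those colours among themselves and fixing the rest merely renames the same vertex classes, leaving the partition of $V(G)$ unchanged --- this is exactly the phenomenon that makes $\overline{K_n}$ uniquely distinguishing colorable despite admitting many colour permutations, and it is why the theorem carries the non-emptiness hypothesis. So ``suitable $\sigma$'' must at minimum move a colour that is \emph{shared} between $G_i$ and another component, and the existence of such a shared colour is itself a claim requiring proof (the paper asserts it for a minimum colouring; it follows from minimality of $\chi_D$, since disjoint palettes on non-isomorphic components would allow colours to be re-used and saved). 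Second, your collision-avoidance over the components isomorphic to $G_i$ is asserted rather than proved: you need a single $\sigma$ that simultaneously changes the partition and avoids colour-isomorphism with every isomorphic sibling, and nothing in the proposal rules out the bad permutations covering all partition-changing ones. The paper supplies precisely the missing construction, and in an order that sidesteps the collision issue entirely: having reduced to two components $H_1 \ncong H_2$, it uses non-emptiness to obtain two colour classes $V_1, V_2$ of $H_1$ with $V_1$ sharing a colour with a class $U$ of $H_2$, and swaps the colours of $V_1$ and $V_2$; the global class containing $U$ then visibly changes, and non-isomorphism of the two components kills any colour-preserving automorphism. Your mechanism is the right one --- the paper's interchange is exactly a transposition applied to one component's palette --- but the deferred ``real work'' is the proof itself, so the proposal is incomplete as it stands.
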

\begin{proof}
Let  $ G $ be a disconnected uniquely distinguishing colorable graph.  By Lemmas \ref{lem1}, \ref{lem4}, $G$ has two uniquely distinguishing colorable components, say 
$ H_{1} $ and $ H_{2}$. 
Consider a distinguishing coloring of $G$ with minimum number of colors. Then
$H_{1}$
and
$ H_{2} $
have at least one same color. 
Now suppose that $ H_{1} \ncong H_{2}$. Since $ G $ has at least one edge, we may assume that $ \chi_{D}(H_{1}) \geq 2$.
Thus $ H_{1} $ has at least two color classes  $ V_{1} $ and $ V_{2} $ in  $ \left[  \chi_{D}(H_{1}) \right]$.
Also, let $ U $ be a color class of $ H_{2}$. So we may assume that $ V_{1} $ and $ U$
have a common color. Interchange the colors 
$ V_{1} $
and 
$ V_2$,
and produce a new partition of $ V(G) $
into distinguishing color classes. This is the required contradiction. Hence we have $ H_{1} \cong H_{2}$. Now assume that $H_1$ (and so $H_2$) is of type 1. 
Lemmas \ref{lem2}, \ref{lem3} conclude that
$H_1 \cup H_2$ is not uniquely distinguishing colorable. This implies that $ G $ is a union of copies of a graph of type 2.
\end{proof}
The disconnected graph in Figure 4 is uniquely distinguishing $2$-colorable.  Clearly, each of its components  is of type 2.

\begin{figure}[h!] \hspace{22mm}
\subfloat{
\begin{tikzpicture}[scale=.7, thick, vertex/.style={scale=.5, ball color=black, circle, text=white, minimum size=.2mm},
Ledge/.style={to path={
.. controls +(45:2) and +(135:2) .. (\tikztotarget) \tikztonodes}}
]
\node[label={[label distance=.1cm]-90:$2$‌}] (a1) at (0,0) [vertex] {};‌
\node[label={[label distance=.1cm]-90:$1$‌}] (a2) at (2,0) [vertex] {};‌
\node[label={[label distance=.1cm]-90:$2$‌}] (a3) at (4,0) [vertex] {};‌
\node[label={[label distance=.1cm]90:$1$}] (a4) at (0,2) [vertex] {};‌
\node[label={[label distance=.1cm]90:‌$2$}] (a5) at (2,2) [vertex] {};‌
\node[label={[label distance=.1cm]90:‌$1$}] (a6) at (4,2) [vertex] {};‌
\node[label={[label distance=.1cm]90:‌$2$}] (a7) at (6,2) [vertex] {};‌
\draw(a4) to(a1) to (a2) to (a3) ;
\draw(a4) to (a5) to (a6)to (a7);
\draw(a2) to (a5);
\end{tikzpicture}
}
\hspace{10mm}
\subfloat{
\begin{tikzpicture}[scale=.7,thick, vertex/.style={scale=.5, ball color=black, circle, text=white, minimum size=.2mm},
Ledge/.style={to path={
.. controls +(45:2) and +(135:2) .. (\tikztotarget) \tikztonodes}}
]
\node[label={[label distance=.1cm]-90:$1$‌}] (a1) at (0,0) [vertex] {};‌
\node[label={[label distance=.1cm]-90:$2$‌}] (a2) at (2,0) [vertex] {};‌
\node[label={[label distance=.1cm]-90:$1$‌}] (a3) at (4,0) [vertex] {};‌
\node[label={[label distance=.1cm]90:$2$}] (a4) at (0,2) [vertex] {};‌
\node[label={[label distance=.1cm]90:‌$1$}] (a5) at (2,2) [vertex] {};‌
\node[label={[label distance=.1cm]90:‌$2$}] (a6) at (4,2) [vertex] {};‌
\node[label={[label distance=.1cm]90:‌$1$}] (a7) at (6,2) [vertex] {};‌
\draw(a4) to(a1) to (a2) to (a3) ;
\draw(a4) to (a5) to (a6)to (a7);
\draw(a2) to (a5);
\end{tikzpicture}
}
\begin{center}
\caption{\ A disconnected uniquely distinguishing colorable graph.}
\end{center}
\end{figure}
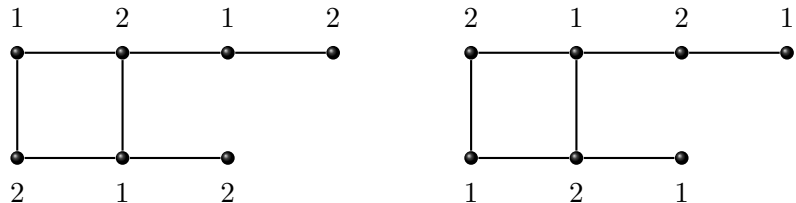
\begin{corollary} Let $ G $ be a   uniquely distinguishing colorable  graph. If $ G $ is 2-regular, then
$ G$ is isomorphic to $C_{3}$ or
$ C_{4}$.
\end{corollary}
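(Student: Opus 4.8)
The plan is to use that a $2$-regular graph is a vertex-disjoint union of cycles, and to split into the connected and disconnected cases. If $G$ is connected it is a single cycle $C_n$ with $n\geq 3$; since $G$ is uniquely distinguishing colorable, Remark~\ref{rem1}(1) excludes every $C_n$ with $n\geq 5$, while the same remark guarantees that $C_3\cong K_3$ and $C_4\cong K_{2,2}$ \emph{are} uniquely distinguishing colorable. Hence in the connected case $G\cong C_3$ or $G\cong C_4$, and the real work is to show that no disconnected $2$-regular graph is uniquely distinguishing colorable.

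For the disconnected case I would apply Theorem~\ref{01}: such a $G$ is isomorphic to $H\cup H$ for a connected graph $H$ of type $2$ (in particular $H$ is uniquely distinguishing colorable). As $G$ is $2$-regular, the component $H$ is connected and $2$-regular, hence a cycle $C_m$ with $m\geq 3$, and Remark~\ref{rem1}(1) again forces $m\in\{3,4\}$. It therefore remains only to rule out $C_3\cup C_3$ and $C_4\cup C_4$.

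The copy $C_3\cong K_3$ is eliminated by showing it is of type $1$, which contradicts the type-$2$ conclusion of Theorem~\ref{01}. Indeed $\chi_{D}(K_3)=3$, so every distinguishing coloring of $K_3$ is a bijection from its three vertices onto the three colors, and since $\mathrm{Aut}(K_3)$ is the full symmetric group it acts transitively on these labelings; thus for every labeling of the two components of $C_3\cup C_3$ there is a color-preserving isomorphism carrying one onto the other, which is exactly type $1$. The delicate case is $C_4\cup C_4$, where Theorem~\ref{01} gives nothing because $C_4$ \emph{is} of type $2$. Here I would argue directly, first recording that $\chi_{D}(C_4)=4$: the proper $2$-coloring is preserved by the reflection interchanging the two vertices of one part, and any proper $3$-coloring repeats a color on an antipodal pair and is preserved by the reflection that swaps that pair and fixes the other two vertices. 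Consequently every distinguishing coloring of $C_4\cup C_4$ restricts to a distinguishing coloring of each component and hence assigns the four colors bijectively on each $C_4$.

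Writing the two cycles as $a_1a_2a_3a_4$ and $b_1b_2b_3b_4$, such a coloring is determined by the bijection pairing equally colored vertices, and I would exhibit two of them, for instance the color classes $\{a_1,b_2\},\{a_2,b_1\},\{a_3,b_3\},\{a_4,b_4\}$ and the color classes $\{a_1,b_1\},\{a_2,b_3\},\{a_3,b_2\},\{a_4,b_4\}$. To see that each is distinguishing I would note that an automorphism preserving all four color classes either fixes both components, whence it fixes every vertex because colors are distinct within a component, or interchanges the two components; in the latter case it is forced to send each $a_i$ to the $b$-vertex of the same color, and a direct check shows that this map sends some edge of one $4$-cycle to a non-adjacent pair, so it is not a graph automorphism. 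As the two displayed colorings are distinct, $C_4\cup C_4$ is not uniquely distinguishing colorable, which finishes the disconnected case. The crux is exactly this $C_4\cup C_4$ analysis: since $C_4$ is of type $2$, the structural Theorem~\ref{01} cannot close the case, so one must combine the value $\chi_{D}(C_4)=4$ with the explicit construction of two inequivalent distinguishing colorings; the rest is routine bookkeeping with Remark~\ref{rem1} and the fact that the components of a $2$-regular graph are cycles.
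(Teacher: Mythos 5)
Your proof is correct and follows essentially the same route as the paper: decompose the $2$-regular graph into cycles, reduce to $C_3$ and $C_4$ via Remark~\ref{rem1}(1), rule out disconnected unions of $C_3$ through its type-1 property combined with Theorem~\ref{01}, and dispose of $C_4\cup C_4$ by producing a second distinguishing $4$-coloring. The only cosmetic difference is the non-uniqueness witness: you exhibit two explicit color-class matchings between the components (after carefully verifying $\chi_D(C_4)=4$ and the forced rainbow structure, which the paper leaves implicit), whereas the paper obtains its second partition by rotating the colors one step around one $C_4$.
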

\begin{proof}
Assume that we assign a distinguishing coloring to $G$ with minimum number of colors.
Clearly, each $2$-regular graphs is a union of some cycles. On the other hand,  it is easy to see that the uniquely distinguishing colorable cycles are 
$ C_{3} $ and $ C_{4}$. Now, let $ G $ be  a $ 2 $-regular uniquely distinguishing  colorable graph. 
We observe that $C_3$ and $C_4$ are of type 1 and type 2, respectively. If $G$ is disconnected, it
cannot be a union of some copies of $ C_{3} $,  by Theorem \ref{01}.
Suppose that  $G$ is a union of some copies of $ C_{4}$ and  that one component of $ G $ contains vertices  $ v_{1}, v_{2}, v_{3}$, $ v_{4} $  and  edges    $ v_{1}v_{2}, v_{2}v_{3}, v_{3}v_{4}$, $ v_{4}v_{1} $. Now, by replacing  the color of $ v_{1}$ with $v_{2}, v_{2}$ with $v_{3}, v_{3}$ with  $v_{4}$ and finally $ v_{4}$ with $v_{1} $,
 one can obtain  another distinguishing color class of 
 $ G $.
This implies that 
$ G $ is isomorphic to $C_{3}$ or $ C_{4} $.
\end{proof} 

The converse of Theorem \ref{01} is not correct. Actually, type 2 graphs exist in such a way that their union cannot be uniquely distinguishing colorable. 
For instance, consider an $m$-partite graph 
$G$ 
of order 
$n$ 
with
$2\leq m < n$. 
Thus, 
$G$ 
has at least a part 
$U$
with cardinality greater than one. Also,  
$G$ 
is a graph of type 2. Let
$v_1 , v_2 \in U$
be two vertices with colors 
$1$ 
and
$2$, respectively. 
Interchanging the colors 
$1$ 
and 
$2$ 
of the vertices produces a new partition of 
$V (G \cup G)$ 
into distinguishing color classes.
\begin{proposition}
Let 
$G$ 
be a connected graph.
If
$\mathop{\cup}\limits^{2} G$
is a uniquely distinguishing colorable, then for any colored graph 
$G$, 
there exists an automorphism 
$f$ 
and exactly one component 
$G'$
of 
$\mathop{\cup}\limits^{2} G$ 
such that 
$f(G)=G'$ 
and 
$f$ 
preserves the distinguishing  color class labels.
\end{proposition}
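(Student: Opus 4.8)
The plan is to restate the proposition in terms of labelings of the unique distinguishing color classes, and then to pit the uniqueness of the coloring of $G\cup G$ against the existence of a component-swapping automorphism. First I would set $n=\chi_{D}(G)$ and invoke Lemma~\ref{lem2} to get $\chi_{D}(G\cup G)=n$ and Lemma~\ref{lem1} to get that $G$ is itself uniquely distinguishing colorable, with unique distinguishing partition $P=\{V_{1},\dots,V_{n}\}$. Since the restriction to a component of any distinguishing coloring of $G\cup G$ is a distinguishing coloring of that component, each component needs at least $n$ colors; as only $n$ are available, in the unique minimum coloring of $G\cup G$ both components $A$ and $B$ carry exactly the partition $P$ and differ only in how the $n$ colors are assigned to $V_{1},\dots,V_{n}$. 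I would record these assignments as labelings $\alpha$ on $A$ and $\beta$ on $B$, and observe that no color-preserving isomorphism $A\to B$ can exist: such a map, together with its inverse, would be a non-trivial color-preserving automorphism of $G\cup G$ interchanging the two components, contradicting that the coloring is distinguishing.

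Now let $\mu$ be the labeling arising from an arbitrary coloring of $G$ by its distinguishing classes; the goal is that exactly one component is color-isomorphic to $(G,\mu)$. The easier half is \emph{at most one}: if color-preserving isomorphisms $(G,\mu)\to(A,\alpha)$ and $(G,\mu)\to(B,\beta)$ both existed, composing one with the inverse of the other would give a color-preserving isomorphism $A\to B$, which we have just excluded.

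The harder half, which I expect to be the main obstacle, is \emph{at least one}. Here I would argue by contradiction, assuming $(G,\mu)$ is color-isomorphic to neither component, and build a second minimum distinguishing coloring of $G\cup G$ to contradict unique colorability. Concretely, recolor $A$ by $\mu$ while keeping $\beta$ on $B$. This still uses $n$ colors and remains distinguishing: within each component the underlying partition is the distinguishing partition $P$, and no component-swap can preserve colors because $\mu$ is not color-isomorphic to $\beta$. The delicate point is to confirm that the new partition really differs from the original, and here I would compare the induced matchings between the classes on $A$ and on $B$: the original groups $V_{i}^{A}$ with $V_{\beta^{-1}\alpha(i)}^{B}$, whereas the recolored one groups $V_{i}^{A}$ with $V_{\beta^{-1}\mu(i)}^{B}$, and these coincide only if $\alpha=\mu$, which is ruled out. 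Exhibiting two distinct minimum distinguishing colorings contradicts the uniqueness hypothesis, so $(G,\mu)$ must match at least one component. Combining the two halves yields a unique component $G'$ and a color-preserving isomorphism $f$ with $f(G)=G'$ preserving all color-class labels, as claimed.
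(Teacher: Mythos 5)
Your proof is correct and its core is exactly the paper's argument: the paper's entire (very terse) proof is your ``at least one'' contradiction, replacing the coloring of one component of $G\cup G$ by the unmatched labeling $\mu$ to produce a second minimum distinguishing coloring. You additionally make explicit what the paper leaves implicit --- that both components carry the unique partition $P$ (via Lemmas \ref{lem1} and \ref{lem2}), the ``at most one'' half by composing two color-preserving isomorphisms into a forbidden component swap, and the checks that the recolored union is still distinguishing and is a genuinely different partition since it forces $\mu=\alpha$ otherwise.
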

\begin{proof}
For a contradiction, assume that there exists a colored graph 
$G$ 
such that any automorphism 
$f$ 
with 
$f(G)= G'$, 
do not preserve the color class labels. By replacing the coloring of one component of 
$\mathop{\cup}\limits^{2} G$
with the distinguishing coloring of 
$G$, 
we will obtain another distinguishing coloring of 
$\mathop{\cup}\limits^{2} G$, 
a contradicting.
\end{proof}

\begin{corollary}
Let 
$G$ 
be a connected graph.
If
$G \cup G$
is a uniquely distinguishing colorable, then there exists exactly two labeling of the distinguishing color classes of 
$G$ 
that don't image together by a color-preserving automorphism.  
\end{corollary}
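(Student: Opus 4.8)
The plan is to build everything on the preceding proposition together with Lemmas~\ref{lem1} and \ref{lem2}, so I would first record the standing reductions. Since $G\cup G$ is uniquely distinguishing colorable, Lemma~\ref{lem1} gives that $G$ itself is uniquely distinguishing colorable, so $G$ has a single distinguishing partition $\{V_1,\ldots,V_k\}$ with $k=\chi_{D}(G)$, and Lemma~\ref{lem2} gives $\chi_{D}(G\cup G)=k$. I would then observe that in the (unique) distinguishing coloring of $G\cup G$ each component must carry the whole partition $\{V_1,\ldots,V_k\}$: were some component's induced coloring not distinguishing, a nontrivial color-preserving automorphism of that component extended by the identity on the other would contradict the distinguishing property of $G\cup G$. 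Hence the two components are colored by two honest labelings $L_1,L_2$ of $\{V_1,\ldots,V_k\}$, and the phrase ``labeling of the distinguishing color classes'' is meaningful for each of them.

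Next I would turn ``imaging together'' into an equivalence relation on the set of labelings of the color classes of $G$: declare $L\sim L'$ precisely when some automorphism of $G$ carries the coloring determined by $L$ onto the coloring determined by $L'$ while preserving all color-class labels. Reflexivity, symmetry and transitivity come from the identity automorphism, from inverses, and from composition. The target statement is then reformulated as the claim that this relation has exactly two classes, i.e.\ one can exhibit two labelings that do not image together while every other labeling images together with one of them.

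The core argument then has two halves. For the lower bound I would show $L_1\not\sim L_2$: a color-preserving automorphism taking the $L_1$-coloring onto the $L_2$-coloring restricts to a color-preserving isomorphism $h$ from one component onto the other, and the map acting as $h$ on one component and as $h^{-1}$ on the other is a nontrivial color-preserving automorphism of $G\cup G$ (the component swap), contradicting that the displayed coloring is distinguishing; so $L_1$ and $L_2$ lie in different classes. For the upper bound I would invoke the preceding proposition: viewing an arbitrary labeling $L$ as a colored copy of $G$, the proposition yields exactly one component $G'$ and an automorphism carrying $L$ onto the coloring of $G'$ while preserving labels. Since the two components wear the labelings $L_1$ and $L_2$, this says that exactly one of $L\sim L_1$, $L\sim L_2$ holds; hence every labeling lies in the class of $L_1$ or of $L_2$. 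Combining the two halves gives exactly two classes, so exactly two labelings that do not image together, as required.

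I expect the main obstacle to be the bookkeeping that translates the proposition's statement---phrased in terms of an automorphism sending a colored copy of $G$ into a fixed component of $G\cup G$---into the relation $\sim$ on labelings, and in particular verifying that ``matches exactly one component'' is literally the same as ``is equivalent to exactly one of $L_1,L_2$.'' One must also be careful that the swap used for the lower bound is genuinely color-preserving and nontrivial, and that both components really do display all of $\{V_1,\ldots,V_k\}$; once these points are pinned down, the two-class conclusion is immediate from the proposition.
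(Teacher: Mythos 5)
Your proposal is correct and follows exactly the route the paper intends: the paper states this corollary without a separate proof, as an immediate consequence of the preceding proposition, and your argument is precisely that deduction made explicit (Lemmas~\ref{lem1} and \ref{lem2} to see that both components wear labelings $L_1,L_2$ of the unique partition, the component-swap automorphism to show $L_1\not\sim L_2$, and the proposition's ``exactly one component'' to show every labeling is equivalent to exactly one of $L_1,L_2$). Your careful handling of the details the paper leaves implicit --- that each component's induced coloring is distinguishing with exactly $\chi_D(G)$ classes, and that ``imaging together'' is an equivalence relation --- is sound and fills in the bookkeeping correctly.
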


Let 
$G$ 
be a bipartite graph.
If 
$G$ 
is an asymmetric graph (that is, the automorphism group of 
$G$ 
is trivial), then 
$G$ 
is of type 2, 
$G \cup G$ 
is a uniquely distinguishing colorable graph and so 
$\chi_{D}(G \cup G)= \chi_{D}(G)=2$. 
This means that there are an infinite number of disconnected uniquely distinguishing colorable graphs.

\section{Bipartite graphs}
In this section, we study the uniquely distinguishing colorable bipartite graphs and specially uniquely distinguishing colorable trees. 
As main result of this section, we show that the only uniquely distinguishing colorable tree is star graph, when distinguishing chramatic number is more than 
$2$. 

\begin{lemma}\label{1}
Let $ G $ be a graph and $ x $
be a fixed  vertex of $G$ such  that $ \chi_{D}(G) - 1 >\T{deg}( x )$. Then $ G $ is not uniquely distinguishing colorable.
\end{lemma}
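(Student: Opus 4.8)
The plan is to produce two distinct distinguishing colorings of $G$ that both use $\chi_{D}(G)$ colors, which by definition shows $G$ is not uniquely distinguishing colorable. Start from any distinguishing coloring attaining the minimum, say $[\chi_{D}(G)]=\{V_1,\ldots,V_{\chi_{D}(G)}\}$, and suppose $x\in V_1$. The first step is a counting argument on the closed neighborhood of $x$. Since the coloring is proper, the $\deg(x)$ neighbors of $x$ all avoid the color of $x$, so they occupy at most $\deg(x)$ of the classes among $V_2,\ldots,V_{\chi_{D}(G)}$. The hypothesis $\chi_{D}(G)-1>\deg(x)$ gives $\chi_{D}(G)-1\geq \deg(x)+1$ such classes, so at least one class, say $V_j$ with $j\neq 1$, contains no neighbor of $x$.

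Next I would define the competing coloring by relocating the fixed vertex: set $V_1'=V_1\setminus\{x\}$, $V_j'=V_j\cup\{x\}$, and $V_i'=V_i$ for all other $i$. Properness is immediate, since $V_j$ contains no neighbor of $x$. The essential point is that this new partition is again distinguishing, and this is exactly where the fixed-vertex hypothesis enters: every automorphism $f$ of $G$ satisfies $f(x)=x$. Consequently, any $f$ preserving every new class $V_i'$ also preserves every original class $V_i$ (because $f$ fixes $x$, adjoining or deleting $x$ from a preserved class leaves it preserved, so $f(V_1)=V_1$ and $f(V_j)=V_j$ follow). Since the original coloring is distinguishing, such an $f$ must be trivial, and therefore the new coloring is distinguishing as well.

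Finally I would check that the new coloring really differs from the original and uses the right number of colors, splitting on $|V_1|$. If $|V_1|\geq 2$, then $V_1'$ stays non-empty, all $\chi_{D}(G)$ classes survive, and the block containing $x$ has changed from $V_1$ to $V_j\cup\{x\}$, which cannot equal any original class (it contains $x$, so it could only be $V_1$, but $V_j\cup\{x\}=V_1$ would force the disjoint sets $V_j$ and $V_1$ to overlap). Thus we have a genuine second distinguishing $\chi_{D}(G)$-coloring. If instead $|V_1|=1$, the recoloring uses only $\chi_{D}(G)-1$ colors while remaining distinguishing, contradicting the minimality of $\chi_{D}(G)$; hence this degenerate case never occurs, and we are always in the first case.

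The main obstacle, and the step that deserves the most care, is the verification that the recolored partition is still distinguishing; the counting and the final case split are routine by comparison. This is precisely where the hypothesis that $x$ is \emph{fixed} is indispensable: if some automorphism could move $x$, then shifting its color class might create a new color-preserving symmetry and destroy the distinguishing property. Tracking the correspondence between automorphisms that preserve the new classes and those that preserve the old ones, anchored throughout by $f(x)=x$, is the technical heart of the argument.
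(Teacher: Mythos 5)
Your proof is correct and takes essentially the same route as the paper's: both use the degree count to find a color class $V_j$ disjoint from $N(x)$, move the fixed vertex $x$ into it, rely on $f(x)=x$ to see the recoloring stays distinguishing, and dispose of the singleton case $|V_1|=1$ by the minimality of $\chi_{D}(G)$. The only difference is organizational — the paper first rules out the singleton class by merging it into $V_2$ and then performs the move, whereas you do one recoloring and split cases afterward — and you are somewhat more explicit than the paper about why the fixed-vertex hypothesis preserves the distinguishing property.
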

\begin{proof}
Let $[\chi_{D}(G)] = \{V_{1}, V_{2}, \ldots ,V_{\chi_{D}(G)} \} $
be a distinguishing coloring of $ G $ such that $ N(x) $ 
has no intersection with at least two distinguishing color classes of $G$, say with $ V_{1} $ and $ V_{2}$. Now, assume that  the distinguishing color class of $ x $ is of cardinality one and that $ x \in V_{1} $. 
By replacing  the color of $ V_{1} $ with the color of $ V_{2} $, one can obtain a distinguishing coloring of 
$ G $ with less than $ \chi_{D}(G) $ colors. This implies that  the distinguishing color class of $ x $
is of cardinality at least two. Suppose that 
$ x \in V_{1} $  and that $ \vert V_{1} \vert \geq 2$. Now, by replacing the color of
$x$ with the color of $V_{2}$, we get another distinguishing color class of 
$ G $. Hence $ G $ is not uniquely distinguishing colorable.
\end{proof}
\begin{proposition}
Let $ G $ be a connected uniquely distinguishing colorable graph with a fixed pendant vertex. Then 
$ D(G) = 1 $ and $ \chi_{D}(G) = 2 $.
\end{proposition}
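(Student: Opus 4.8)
The plan is to first pin down $\chi_{D}(G)$ exactly and then to analyze $\mathrm{Aut}(G)$ via the bipartite structure forced by $\chi_{D}(G)=2$ together with the fixed pendant vertex. Let $x$ denote the fixed pendant vertex, so $\deg(x)=1$. Since $G$ contains the edge incident to $x$, we have $\chi_{D}(G)\geq\chi(G)\geq 2$. For the reverse bound I would use Lemma~\ref{1} in contrapositive form: as $G$ is uniquely distinguishing colorable, we cannot have $\chi_{D}(G)-1>\deg(x)=1$, so $\chi_{D}(G)\leq 2$. Hence $\chi_{D}(G)=2$.

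Next I would exploit that $\chi_{D}(G)=2$ forces $G$ to be bipartite. Since $G$ is connected, its proper $2$-coloring is unique as a partition of $V(G)$, giving the bipartition $\{A,B\}$; this is therefore the unique distinguishing $2$-coloring realizing $\chi_{D}(G)=2$. In particular $\{A,B\}$ is a distinguishing coloring, so every automorphism of $G$ that fixes both $A$ and $B$ setwise must be the identity. On the other hand, every automorphism of a connected bipartite graph either preserves each part or interchanges $A$ and $B$, so this dichotomy reduces the question of whether $\mathrm{Aut}(G)$ is nontrivial to the existence of a part-swapping automorphism.

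Finally I would rule out the part-swapping automorphisms using the fixed pendant vertex. If some $f\in\mathrm{Aut}(G)$ interchanged $A$ and $B$, then $f$ would move $x$ into the part opposite to its own, so $f(x)\neq x$, contradicting that $x$ is fixed. Hence no automorphism swaps the parts, and combined with the previous paragraph this forces $\mathrm{Aut}(G)$ to be trivial; that is, $G$ is asymmetric, which is exactly the statement $D(G)=1$. Together with $\chi_{D}(G)=2$ this finishes the proof. The only mildly delicate point is the hinge in the middle paragraph: one must justify that the distinguishing $2$-coloring coincides with the (essentially unique) bipartition of the connected bipartite graph $G$, so that the color-class condition translates cleanly into triviality of the part-preserving automorphisms.
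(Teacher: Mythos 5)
Your proposal is correct and takes essentially the same route as the paper: both deduce $\chi_{D}(G)=2$ by applying Lemma~\ref{1} to the fixed pendant vertex $x$ (plus $\chi_D(G)\geq 2$ from the edge at $x$), and then show that every automorphism must preserve the two parts of the bipartition, which contradicts the distinguishing property of the $2$-coloring unless $\mathrm{Aut}(G)$ is trivial. The only cosmetic difference is in how part-preservation is obtained: the paper notes that any automorphism preserves distances to the fixed vertex $x$, forcing $v$ and $f(v)$ to lie at even distance and hence in the same part, while you invoke the preserve-or-swap dichotomy for connected bipartite graphs and rule out the swap directly because a part-swapping automorphism cannot fix $x$ — the two arguments are interchangeable, and your explicit justification that the distinguishing $2$-coloring coincides with the unique bipartition is a point the paper uses implicitly.
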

\begin{proof}
By Lemma \ref{1},  
$ \chi_{D}(G) = 2$. Hence $ G $ is bipartite. Assume that $ D(G) \neq 1$ and seek a contradiction.  
There is a non-trivial automorphism 
$f$ 
of 
$G$ 
such that 
$f(v)=u$, 
for two vertices 
$v$ 
and 
$u$. 
Vertices 
$v$ 
and 
$u$ 
should be of the same distance of the pendant vertex. This implies that the distance between 
$v$ 
and 
$u$ 
must be even and so 
they will be in the same part in the bipartite graph
$G$. 
Therefore, 
$f$ 
preserve the unique distinguishing coloring and which is a contradiction. 
\end{proof}
\begin{lemma} \label{aa0}
Let  $ G $ be a uniquely distinguishing colorable graph with at least two sibling pendant vertices. Then $ G $ is a star graph.
\end{lemma}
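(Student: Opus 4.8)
The plan is to let $u,v$ be the two sibling pendant vertices with common neighbour $w$, and to pin down the unique distinguishing coloring $[\chi_D(G)]=\{V_1,\dots,V_k\}$ in a neighbourhood of $w$. First I would record the basic symmetry: $u$ and $v$ are leaves whose only neighbour is $w$, so the transposition $(u\,v)$ is an automorphism of $G$; since the coloring is distinguishing it cannot fix every colour class, which forces $u$ and $v$ into distinct classes. As $w$ is adjacent to both, $w$ lies in a third class, so $\chi_D(G)\ge 3$, and more generally the set $L$ of \emph{all} leaves at $w$ receives $|L|$ pairwise distinct colours while $w$ gets a colour $c_0$ used on none of them. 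I would also dispose of the disconnected case at the outset: by Theorem~\ref{01} a disconnected UDC graph is $H\cup H$, but the singleton property proved next shows a leaf's colour is used in only one copy, contradicting the equal-colour-set conclusion in the proof of Lemma~\ref{lem2}; hence $G$ is connected.

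Second, I would show each leaf at $w$ is alone in its colour class. Recolouring only $u$ and $v$ by swapping their colours produces the coloring $\varphi\circ(u\,v)$, whose class-stabiliser is conjugate to the original (trivial) one and is therefore again distinguishing; by uniqueness it must induce the \emph{same} partition. Since $u,v$ have different colours, the only way the swap can permute the classes setwise is that the classes of $u$ and of $v$ are singletons. Running the same argument over every pair in $L$ makes all of $L=\{u_1,\dots,u_t\}$ singleton classes, say of colours $1,\dots,t$, with $w$ coloured $c_0\notin\{1,\dots,t\}$.

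Third -- and this is where I expect the real work -- I would rule out $\chi_D(G)=k>t+1$. If some colour $c\notin\{1,\dots,t,c_0\}$ is present, recolour $u_1$ from $1$ to $c$; this stays proper (its only neighbour $w$ has colour $c_0\ne c$) and uses only $k-1$ colours. The delicate point is that the new coloring is still distinguishing: any colour-preserving automorphism $\tau$ must fix the surviving singletons $u_2,\dots,u_t$, hence fix $w$ (the unique neighbour of $u_2$, where $t\ge 2$ is used), hence send $u_1$ to a leaf of $w$ of the new colour $c$ -- of which $u_1$ is the only one. Thus $\tau$ fixes $u_1,\dots,u_t,w$ and consequently preserves the \emph{original} partition, so $\tau$ is trivial. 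This contradicts the minimality of $\chi_D(G)$, forcing $k=t+1$.

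Finally, with $k=t+1$ each colour $1,\dots,t$ occurs only on a singleton leaf, so every remaining vertex carries colour $c_0$ and lies in the independent set containing $w$; such a vertex other than $w$ could be adjacent only to the $u_i$, each of which already has $w$ as its sole neighbour, so it would be isolated. Connectivity then forces $V(G)=\{w\}\cup L$, i.e. $G=K_{1,t}$ is a star. The main obstacle is the distinguishing-preservation check in the third step: one must verify that merging a leaf's colour into a higher colour class introduces no new colour-preserving automorphism, which is exactly what the leaf/twin degree analysis above secures.
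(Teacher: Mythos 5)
Your proof is correct, and its first half is the same as the paper's: the paper also begins by transposing the two sibling leaves and invoking uniqueness to force each of them into a singleton colour class (your conjugation argument $\varphi\mapsto\varphi\circ(u\,v)$, with the observation that the partition only survives the swap when both classes are singletons, is exactly the mechanism the paper uses, stated there more tersely). The endgames differ genuinely. Once $a,b$ are singletons, the paper uses non-starness to select a vertex $c$ having no common neighbour with $a$ or $b$ (hence $c\not\sim w$) and recolours $c$ with $a$'s colour, producing a second distinguishing $\chi_D(G)$-colouring and contradicting \emph{uniqueness}. You instead recolour the leaf $u_1$ itself into an existing class of colour $c\notin\{1,\dots,t,c_0\}$, contradicting \emph{minimality} of $\chi_D(G)$; this pins down $\chi_D(G)=t+1$ and lets you read the star structure directly off the colour count. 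Your route buys two real advantages. First, it does not require the paper's vertex $c$ to exist: the paper justifies that existence only by ``$G$ is not a star,'' yet no such $c$ exists when $N[w]=V(G)$ (for instance $w$ adjacent to all other vertices), a configuration the paper's written case analysis does not cover, whereas your recolouring of $u_1$ is available whenever $k>t+1$. Second, you address disconnectedness explicitly (via Theorem~\ref{01} together with the equal-colour-set step inside the proof of Lemma~\ref{lem2}), while the paper handles it only implicitly (a second component would itself supply a valid $c$). Two small polish points for a final write-up: the forward reference in your disconnected case (appealing to the singleton property ``proved next'') should be reordered, though it is logically harmless since your Step 2 never uses connectivity; and in fact the detour through Theorem~\ref{01} is dispensable, because if $G$ were disconnected the singleton leaves of a second component would already furnish the extra colour $c$, so your Step 3 minimality contradiction applies verbatim and connectivity is only needed, as you use it, in the final step.
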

\begin{proof}
Let $ a $ and $ b $ be sibling pendant vertices of $ G$. Assume that $ G$ is not a star graph.  Consider a distinguishing coloring of $ G$. If at least one of $ a $ and $ b $ belongs to a non-singleton color class, by interchanging the colors of $ a $ and $ b $  we get another distinguishing color class of $ G$, a contradiction. Suppose $ a $ and $ b $ belong to a singleton color class. Since $ G$ is not a star graph, there exists a vertex $c$ such that $ a \nsim c \nsim b$, and $ c $ is neither a sibling of $a$ nor $b$. Now, by replacing the color of $c$ with the color of $a$, we get another distinguishing color class of $G$, so we reach a contradiction. 
\end{proof}

Let us recall the following interesting result about trees, which we shall use in the rest of the paper.
\begin{theorem} \label{2}
\cite[Theorem 3.1]{Collins} Let $ T $ be a tree with at least $ 2 $ vertices. Then $ \chi_{D}(T) = 2 $ if and only if  $ T $ has no non-trivial automorphism, or the center of $ T $ is an edge $ xy $ and $ T $ has precisely one non-trivial automorphism, and this automorphism interchanges $ x $ and $ y$. 
\end{theorem}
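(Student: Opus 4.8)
The plan is to reduce everything to the unique proper two-colouring of a tree together with Jordan's theorem on the centre. Since $T$ is connected with at least two vertices it contains an edge, so $\chi(T)=2$ and hence $\chi_D(T)\geq 2$. A connected bipartite graph has a unique proper two-colouring, namely its bipartition $(A,B)$, up to interchanging the two colours, so the partition of $V(T)$ into the two colour classes is unique. Consequently $\chi_D(T)=2$ holds if and only if this bipartition is itself a distinguishing colouring, that is, if and only if no non-trivial automorphism of $T$ fixes both $A$ and $B$ setwise. I would frame the entire argument around this reformulation, and I would repeatedly use the two standard facts that every automorphism of $T$ preserves distances and fixes the centre of $T$, and that the bipartition coincides with the partition of $V(T)$ by parity of distance from any fixed vertex.

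For the direction $(\Leftarrow)$ I would dispatch the two alternatives directly. If $T$ has no non-trivial automorphism, the distinguishing condition is vacuous and the bipartition is already distinguishing, so $\chi_D(T)=2$. If instead the centre of $T$ is an edge $xy$ admitting a unique non-trivial automorphism $\sigma$ that interchanges $x$ and $y$, then since $x$ and $y$ are adjacent they lie in different bipartition classes, say $x\in A$ and $y\in B$; as $\sigma(x)=y\in B$, the automorphism $\sigma$ does not fix $A$ setwise, so again the bipartition is distinguishing and $\chi_D(T)=2$.

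For the direction $(\Rightarrow)$ I would argue by contraposition: assuming $T$ has a non-trivial automorphism and that alternative (b) fails, I would exhibit a non-trivial automorphism preserving the bipartition, which forces $\chi_D(T)\geq 3$. Split on the centre. If the centre is a single vertex $c$, then every automorphism fixes $c$, hence preserves parity of distance from $c$, hence preserves the bipartition; any non-trivial automorphism then witnesses $\chi_D(T)\geq 3$. If the centre is an edge $xy$, then each automorphism either fixes both $x$ and $y$ or swaps them. If some non-trivial automorphism fixes both, it preserves parity of distance from $x$ and hence the bipartition, and we are done. Otherwise every non-trivial automorphism swaps $x$ and $y$; since (b) is assumed to fail, there are two distinct such automorphisms $\sigma\neq\tau$, and then $\sigma^{-1}\tau$ fixes both $x$ and $y$ and is non-trivial (it equals the identity only when $\sigma=\tau$), returning us to the previous subcase. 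The only configuration left uncovered by this analysis is precisely ``centre an edge $xy$ with exactly one non-trivial automorphism, interchanging $x$ and $y$,'' i.e. condition (b), so the contrapositive is complete.

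The main obstacle is the edge-centre subcase in which every non-trivial automorphism swaps the two central vertices: here one must manufacture, by composing two distinct such automorphisms, a non-trivial automorphism that fixes both endpoints (and so preserves the bipartition), and one must verify both that this composite is non-trivial and that the single uncovered possibility is exactly condition (b). The rest of the argument is routine once the reformulation via the unique bipartition and the distance- and centre-preserving properties of tree automorphisms are in place.
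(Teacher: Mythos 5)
Your proof is correct. Note that the paper itself gives no proof of this statement: it is quoted verbatim from Collins and Trenk \cite{Collins} (their Theorem 3.1) as a known result, so there is no internal argument to compare against. Your route --- reduce to the unique proper $2$-colouring of a tree, observe that $\chi_{D}(T)=2$ holds exactly when no non-trivial automorphism fixes both bipartition classes setwise, and then split on whether the centre is a vertex or an edge, using that automorphisms fix the centre and preserve distance parity --- is essentially the standard argument, and it matches the original Collins--Trenk proof in substance. The one genuinely delicate step, manufacturing a non-trivial automorphism fixing both central vertices by composing two distinct swapping automorphisms $\sigma^{-1}\tau$, is handled correctly: it is non-trivial precisely because $\sigma\neq\tau$, and it returns you to the vertex-fixing subcase, so the only configuration escaping the contrapositive is exactly condition (b).
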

\begin{theorem}
Let $ T $ be a uniquely distinguishing colorable tree with $ \chi_{D}(T) \geq 3 $. Then $ T $ is a star graph.
\end{theorem}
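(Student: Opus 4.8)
The plan is to argue by contradiction. Assume that $T$ is uniquely distinguishing colorable with $\chi_{D}(T)=k\geq 3$ but that $T$ is \emph{not} a star, and let $\mathcal{C}=\{V_{1},\dots,V_{k}\}$ be its unique minimum distinguishing coloring. First I would record three facts. By Lemma~\ref{aa0}, a uniquely distinguishing colorable graph having two sibling pendant vertices is a star; since $T$ is not a star, no vertex of $T$ is adjacent to two leaves. By the proposition on fixed pendant vertices, a connected uniquely distinguishing colorable graph with a fixed pendant vertex has $\chi_{D}=2$; since $\chi_{D}(T)\geq 3$, no leaf of $T$ is fixed, hence every leaf lies in an $\mathrm{Aut}(T)$-orbit of size at least $2$. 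Finally, the defining uniqueness gives a clean tool: for every $\phi\in\mathrm{Aut}(T)$ the partition $\phi(\mathcal{C})$ is again a minimum distinguishing coloring, so by uniqueness $\phi$ maps each class $V_{i}$ onto a class $V_{j}$; that is, every automorphism permutes the colour classes of $\mathcal{C}$.

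Next I would carry out a structural reduction. Take a longest path $x_{0}x_{1}\cdots x_{m}$ in $T$; as $T$ is not a star its diameter is at least $3$, so $m\geq 3$. Combining maximality of the path with the no-sibling-pendant property forces $\deg(x_{1})=2$: a neighbour of $x_{1}$ other than $x_{0}$ and $x_{2}$ would root a branch that is either a second leaf at $x_{1}$, which is forbidden, or would extend to a path longer than $x_{0}\cdots x_{m}$. Since $x_{0}$ is not fixed, pick $\sigma\in\mathrm{Aut}(T)$ with $\sigma(x_{0})=y_{0}\neq x_{0}$; then $\sigma(x_{1})=y_{1}$ is a second degree-$2$ support carrying the leaf $y_{0}$. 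Write $a=c(x_{0})$ and $b=c(x_{1})$ for the colours in $\mathcal{C}$.

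The core is a recolouring argument. For each colour $a^{\ast}\notin\{a,b\}$, recolour $x_{0}$ by $a^{\ast}$; this remains proper because the unique neighbour of $x_{0}$ is $x_{1}$, coloured $b$. If the result is distinguishing, then it is a second minimum distinguishing coloring inducing a partition different from $\mathcal{C}$ (or one using only $k-1$ colours when $V_{a}=\{x_{0}\}$), contradicting unique colorability or the minimality of $k$. Hence the recoloured coloring must fail to be distinguishing, and the offending colour-preserving automorphism must move $x_{0}$ to a leaf $z\neq x_{0}$ of colour $a^{\ast}$ whose degree-$2$ support is coloured $b$. Letting $a^{\ast}$ run over the $k-1$ colours different from $b$ thus produces, for every colour other than $b$, a leaf of that colour whose degree-$2$ support is coloured $b$. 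To finish I would pick two such leaves of distinct colours and interchange their colours locally: this stays proper and induces a genuinely different partition into $k$ classes, so if it is distinguishing we contradict unique colorability once more.

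The main obstacle is exactly this final step: ensuring the local colour interchange does not accidentally acquire a nontrivial colour-preserving automorphism, and so fail to be distinguishing. I expect to control it through the rigidity accumulated above, namely that many pairwise distinctly coloured leaves hang, via identically coloured degree-$2$ supports, off a common coloured skeleton, so that any colour-preserving automorphism of the interchanged coloring would be forced either to match two leaves of different colours, which is impossible, or to exhibit a second leaf of one of the two interchanged colours, which one can preclude by choosing the two leaves to carry leaf-unique colours. The residual situation, in which every colour other than $b$ occurs on at least two leaves, I would dispatch by a short separate analysis: for $k\geq 4$ by applying Lemma~\ref{1} to a degree-$2$ support that the accumulated symmetry forces to be fixed, contradicting $\chi_{D}(T)-1>2$, and for $k=3$ by a direct check of the few admissible colour patterns on $x_{0},x_{1},x_{2}$.
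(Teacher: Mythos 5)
Your argument is sound and complete up to its last step: the three preliminary facts are correct (no sibling pendants via Lemma~\ref{aa0}, no fixed leaf via Lemma~\ref{1}, and automorphisms permuting the classes of the unique partition), the deduction $\deg(x_1)=2$ from a longest path is fine, and your recolouring lemma is genuinely proved --- recolouring $x_0$ with $a^{\ast}$ must destroy the distinguishing property, and the offending automorphism does produce, for every colour other than $b$, a leaf of that colour with a $b$-coloured degree-$2$ support; likewise your swap does yield a contradiction \emph{when two distinct colours $c_1,c_2\neq b$ are each carried by exactly one leaf}. But the endgame has a genuine gap, in two respects. First, your case split has a hole: the complement of ``there exist two leaf-unique colours $\neq b$'' is ``at most one leaf-unique colour'', not ``every colour other than $b$ occurs on at least two leaves'', so the case of exactly one leaf-unique colour is covered by neither branch. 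Second and more seriously, the declared residual case is not an argument but a hope: the claim that ``the accumulated symmetry forces a degree-$2$ support to be fixed'' is unsupported, and there is no visible mechanism for it --- as you yourself note, automorphisms of a uniquely distinguishing colorable graph need only \emph{permute} the colour classes, so nothing prevents the $b$-coloured supports from being moved around (think of a spider with several legs of length $2$, where automorphisms permute the supports freely). Likewise the proposed ``direct check of colour patterns on $x_0,x_1,x_2$'' for $k=3$ cannot suffice, because the automorphism that ruins a swapped colouring acts on leaves and branches far from $x_0x_1x_2$. In effect the residual case concentrates the entire difficulty of the theorem, and your sketch defers it rather than resolving it.

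For comparison, the paper closes exactly this kind of configuration by a different mechanism: it shows that some pendant vertex must be fixed by \emph{every} automorphism and then contradicts unique colorability via Lemma~\ref{1}. To get the fixed leaf, it takes two leaves $u,v$ swapped by an automorphism and splits on the parity of their distance. In the even case it chooses, among automorphisms sending $v$ to $u$, one with a maximal fixed set $A$, proves the colours on $A$ are disjoint from those on $V(T)\setminus A$ (otherwise interchange colours between the two isomorphic components of $T[V(T)\setminus A]$), and then recolours $v$ with an $A$-exclusive colour to obtain a second minimum distinguishing partition. In the odd case the tree is forced into a symmetric shape with exactly one non-trivial automorphism, an involution about a central edge, whence Theorem~\ref{2} (Collins--Trenk) gives $\chi_D(T)=2$, a contradiction. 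Your proof would need a substitute for this maximal-fixed-set/parity analysis, or for the appeal to Theorem~\ref{2}, to dispose of the residual case; as it stands, that step fails.
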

\begin{proof}
Assume that  $ T $ is not a star, and seek a contrdiction.  Consider a distinguishing coloring
of $T$ with minimum number of colors. By Lemma \ref{aa0}, $T$  cannot have sibling pendant vertices.

Now, we claim that there is at least one pendant vertex fixed by any automorphism of $T$ which is the required contradiction. To achieve this,  suppose that any  given  pendant vertex of $T$ is moved by  an automorphism of $T$ and  that $ v $ and $ u $ are  two pendant vertices of $T$ imaged to each other by some automorphism of $T$. Consider the unique path from  $ v $  to $ u$. 
If the length  of this path is even, then there exists a vertex, say $ a$, in the unique path from  $ v $ to $ u$ with the same distance from of $ v $ and $ u$. Among the set of automorphisms of $T$ which  image $ v $ to $ u$, consider an  automorphism $ f $ with maximal set of  fixed vertices of $T$.  
Let $A$ denote the set of fixed vertices of $T$ by $f$.
Clearly $A\neq \emptyset$, because $ a\in A $. Assume that $ A $ and $ V(T)\setminus A $ have vertices with the same color 
in the distinguishing coloring of $ T $. Now, consider the induced  subgraph $ T[V(T)\setminus A]$ containing two isomorphic components. By interchanging the colors of the corresponding vertices in the two components,  we get another distinguishing color class of $ T$, a contradiction. This implies that  the vertices in  $ A $ and the vertices in $ V(T)\setminus A$ have different colors. Since $ A $ is non-empty, there is at least one color of vertices of $A$ that is not assigned to any vertices of $ V(T)\setminus A$.  Now, we claim that, by replacing the color $ v$ (or $ u $) with this color,  one can obtain  another distinguishing color class of $ T$. To achieve this, it suffices to show that  
there is no automorphism of $T$ that moves $ v$ (or $ u $) which  preserves the vertex colors. 
Assume that, by some automorphism of $T$ which preserves the vertex colors, $ v$ (or $ u $) is imaged to some vertex of $ T$, say to $ b$,  and seek a contradiction.  Clearly $ b\in A$. Now, since $T$ has no sibling pendant vertices, $ a \nsim b $. Therefore,  $ b $ and $ v$ (or $ u $) have no common neighbors. Hence the neighbors of $ b $ (resp.  $ v$ (or $ u $)) that belongs to $ A $ (resp. $ V(T)\setminus A$) have the same color,  which is a contradiction.

Therefore the length  of the unique path from $v$ to $u$ is odd. The general form of $ T $ is depicted in Figure 8.
\begin{figure}[h!] \hspace{30mm}
\begin{tikzpicture}[thick, vertex/.style={scale=.5, ball color=black, circle, text=white, minimum size=.2mm},
Ledge/.style={to path={
.. controls +(45:2) and +(135:2) .. (\tikztotarget) \tikztonodes}}
]
\node[label={[label distance=.1cm]-90:$u‌$}] (u) at (0,0) [vertex] {};‌
\foreach  \x/\xtext in {1,2,3,4/n}
{
\node[label={[label distance=.1cm]-90:$a_\xtext‌$}] (\x) at (\x,0) [vertex] {};‌
\draw[Ledge] (\x,0) to(\x,0) node[label={[label distance=.2cm]90:‌$G_\xtext$}] {};;
}
\foreach  \x/\xtext in {5/n,6/3,7/2,8/1}
{
\node[label={[label distance=.1cm]-90:$b_\xtext‌$}] (\x) at (\x,0) [vertex] {};‌
\draw[Ledge] (\x,0) to(\x,0) node[label={[label distance=.2cm]90:‌$H_\xtext$}] {};;
}
\node[label={[label distance=.1cm]-90:$v‌$}] (v) at (9,0) [vertex] {};‌
\draw (u) to (1) to (2) to (3);
\draw[dotted] (3) to (4);
\draw (4) to (5);
\draw[dotted] (5) to (6);
\draw (6) to (7) to (8) to (v);
\end{tikzpicture}
\begin{center}
\caption{\ Graph $T$.}
\end{center}
\end{figure}
In Figure 8, $ G_{i} \cong H_{i} $ for all $i=1, \ldots , n$. Thus edge $ a_{n}b_{n} $ is the center of $ T$. Furthermore, $ a_{n}$ and $b_{n} $ are imaged to each other by an automorphism of $ T$, and $ T $ has only this non-trivial automorphism, otherwise, we obtain two pendant vertices with the same distance from $a_i$  (and $b_i$) for $i=1, \ldots , n$,  and so the distance between these two vertices is even, which
is a contradiction.  Thus  Theorem \ref{2}  implies that 
$\chi_{D}(T) = 2$, a contradiction. 
So there is at least one pendant vertex fixed by any automorphism of $ T$. Therefore, by Lemma \ref{1}, 
$ T $ is not a uniquely distinguishing colorable tree. This is a required contradiction. 
\end{proof} 

\section{Graphs with $\chi_{D}(G \cup G)= \chi_{D}(G)$}
If 
$G$ 
is a connected graph, then 
$\chi_{D}(G \cup G)= \chi_{D}(G)$ 
or 
$\chi_{D}(G \cup G)= \chi_{D}(G)+1$. 
Let 
$A, B \subseteq V(G)$.
We say that 
$A$ 
and 
$B$ 
are equal up to automorphism, if there exists an automorphism 
$f$ 
of 
$G$ 
such that 
$f(A) = B$ 
and 
$f(B) = A$.
\begin{theorem}\label{51}
Let 
$G$ 
be a connected graph. If $G$ is not uniquely distinguishing colorable, then 
$\chi_{D}(G \cup G)= \chi_{D}(G)$.
\end{theorem}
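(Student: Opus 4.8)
The plan is to exhibit an explicit distinguishing coloring of $G\cup G$ that uses only $\chi_{D}(G)$ colors. Since every distinguishing coloring of $G\cup G$ restricts to a distinguishing coloring on each component and hence uses at least $\chi_{D}(G)$ colors, while (as noted at the start of this section) $\chi_{D}(G\cup G)\le \chi_{D}(G)+1$, producing one such coloring forces $\chi_{D}(G\cup G)=\chi_{D}(G)$.

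First I would record the basic reduction. Write $G\cup G=G_{1}\cup G_{2}$. Because $G$ is connected, every automorphism of $G_{1}\cup G_{2}$ either fixes each component setwise or interchanges the two components. If each component is given a distinguishing coloring of $G$, then no non-trivial component-preserving automorphism can fix all color classes, since its restriction to a suitable component would be a non-trivial color-preserving automorphism of $G$. Hence the only automorphisms left to defeat are the component-swapping ones, and a component-swapping automorphism fixes all color classes exactly when there is an automorphism of $G$ carrying the coloring of one component onto that of the other while preserving all color-class labels. Thus the whole problem reduces to finding two distinguishing colorings $c_{1}, c_{2}$ of $G$, each with $\chi_{D}(G)$ colors, such that no automorphism of $G$ maps $c_{1}$ onto $c_{2}$ while preserving the labels.

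Now I would invoke the hypothesis that $G$ is not uniquely distinguishing colorable: there are two distinct partitions $[\chi_{D}(G)]_{1}$ and $[\chi_{D}(G)]_{2}$ of $V(G)$ into $\chi_{D}(G)$ distinguishing color classes. If these two colorings are \emph{not} isomorphic, I color $G_{1}$ with $[\chi_{D}(G)]_{1}$ and $G_{2}$ with $[\chi_{D}(G)]_{2}$; no labeling of one can be carried onto a labeling of the other by a color-preserving automorphism, so by the reduction above this is a distinguishing coloring of $G\cup G$ with $\chi_{D}(G)$ colors. If the two colorings \emph{are} isomorphic, I instead fix one partition, say $[\chi_{D}(G)]_{1}$, and color the two components with two different labelings of it; a color-preserving component-swap would then be an automorphism lying in the stabilizer of $[\chi_{D}(G)]_{1}$ and inducing the corresponding permutation of its color classes, so any two labelings lying in different orbits of this stabilizer's action on the classes complete the construction.

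The main obstacle is the residual case in which the stabilizer of $[\chi_{D}(G)]_{1}$ induces the \emph{full} symmetric group on its $\chi_{D}(G)$ color classes; here the relabeling trick fails, because any two labelings — indeed any two distinguishing colorings in the orbit of $[\chi_{D}(G)]_{1}$ — are matched by some color-preserving automorphism, and neither construction above yields anything new. I expect this to be the crux. The plan to eliminate it is to show that such symmetry is incompatible with a second distinguishing partition: when all color classes are mutually interchangeable by automorphisms, one would argue, using connectedness and the minimality of $\chi_{D}(G)$, that every automorphism of $G$ must preserve the partition $[\chi_{D}(G)]_{1}$, so that distinct distinguishing partitions lie in distinct automorphism orbits and therefore cannot be isomorphic — contradicting the case we are in. Once this case is ruled out, one of the two constructions always applies and delivers the required distinguishing $\chi_{D}(G)$-coloring of $G\cup G$.
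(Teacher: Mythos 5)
Your reduction is sound and in fact mirrors the skeleton of the paper's own argument: for connected $G$ the only automorphisms of $G\cup G$ left to defeat, once each component carries a distinguishing coloring of $G$, are the component swaps, and your first two cases (non-isomorphic partitions; isomorphic partitions whose stabilizer induces a \emph{proper} subgroup of the symmetric group on the $\chi_{D}(G)$ classes) are handled correctly by your two constructions. But the residual case you yourself flag as the crux --- a second distinguishing partition in the same automorphism orbit as $[\chi_{D}(G)]_{1}$, while the stabilizer of $[\chi_{D}(G)]_{1}$ induces the full symmetric group on its classes --- is exactly where all the content of the theorem sits, and your proposal contains no proof there. The route you sketch, namely that full interchangeability of the classes should force \emph{every} automorphism of $G$ to preserve the partition, is offered only as ``one would argue''; no mechanism is given, and it is not clear any orbit-theoretic argument of that shape exists: nothing about a transitive, or even fully symmetric, induced action of the partition stabilizer formally prevents some other automorphism from carrying the partition to a distinct one.

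The paper's proof shows what eliminating this configuration actually costs, and it is structural rather than orbit-theoretic. Assuming for contradiction that $\chi_{D}(G\cup G)=\chi_{D}(G)+1$ --- equivalently, that every labeled coloring of the two components by $[\chi_{D}(G)]_{1}=\{V_{1},\ldots,V_{\chi_{D}(G)}\}$ and $[\chi_{D}(G)]_{2}=\{U_{1},\ldots,U_{\chi_{D}(G)}\}$ admits a label-preserving swap, which is precisely your residual situation --- it first deduces that all color classes of both partitions are equal up to automorphism. It then sets $A:=V_{1}\cap U_{1}\neq\emptyset$ and lets $B$ be the complementary piece of $V_{1}$ (both nonempty because the partitions are distinct), so that every class $V_{i}$ and $U_{i}$ splits into an ``$A$-part'' and a ``$B$-part'' up to automorphism. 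Playing specifically chosen relabelings of the two components against the forced label-preserving swap automorphisms, it shows there is no edge between any $A$-part $A_{V_i}$ and any $B$-part $B_{V_j}$, whence $G$ would be disconnected --- contradicting connectedness. So connectivity enters through an adjacency argument between the two pieces of the color classes, not through the claim that all automorphisms fix the partition; to complete your proof you would need to supply an argument of comparable substance for the residual case, and as written that case is a genuine gap.
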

\begin{proof} 
By the way of contradiction, suppose that 
$\chi_{D}(G \cup G)= \chi_{D}(G)+1$. 
There exist two coloring 
$\left[  \chi_{D}(G) \right]_1 = \{V_{1}, V_{2}, \ldots ,V_{\chi_{D}(G)}\}$ 
and 
$\left[  \chi_{D}(G) \right]_2 = \{U_{1}, U_{2}, \ldots ,U_{\chi_{D}(G)}\}$ 
for 
$G$. 
If both components of 
$G \cup G$ 
are colored with one of 
$\left[  \chi_{D}(G) \right]_1$ 
and
$\left[  \chi_{D}(G) \right]_2$, 
or each component is colored with one of  
$\left[  \chi_{D}(G) \right]_1$ 
and
$\left[  \chi_{D}(G) \right]_2$, 
such that labels from 
$1$ 
to 
$\chi_{D}(G)$ 
are used in each component,
there will be an automorphism which embeds one component into the other and preserves all the distinguishing color class labels. 
This implies that all color classes of 
$\left[  \chi_{D}(G) \right]_1$ 
and
$\left[  \chi_{D}(G) \right]_2$, 
are equal up to automorphism. 
Since 
$\left[  \chi_{D}(G) \right]_1$ 
and
$\left[  \chi_{D}(G) \right]_2$ 
give us different partitions of 
$V(G)$, 
we can consider 
$A:=V_1 \cap U_1 \neq \emptyset$  
and 
$B:= V_1 - A = U_1 - A \neq \emptyset$. 
In each color class, there is a copy of 
$A$ 
and 
$B$. 
For any 
$i$, 
$1\leq i \leq \chi_{D}(G)$,
let
$V_i = A_{V_i } \cup B_{V_i }$ 
and 
$U_i = A_{U_i } \cup B_{U_i }$, 
where 
$A_{V_i }$ ($A_{U_i }$) 
and 
$B_{V_i }$ ($B_{U_i }$) 
are equal up to automorphism to
$A$ 
and 
$B$, 
respectively. Clearly, for any 
$i$, 
$1\leq i \leq \chi_{D}(G)$, 
$A_{V_i } \cup B_{V_i }$ 
and 
$A_{U_i } \cup B_{U_i }$
are independent sets. Also  
$A_{V_1}=A_{U_1}$,  
and 
$A_{U_1} \cup B_{V_1}$
is an independent set. 

We claim that there is no edge between 
$A_{V_i }$ 
and 
$B_{V_j }$, 
$1\leq i, j \leq \chi_{D}(G)$. 
Assume, for the sake of contradiction, that there is an edge between a vertex of 
$A_{V_j}$ 
and a vertex of
$B_{V_k}$, 
for some 
$j$ 
and 
$k$, 
$1\leq j<k \leq \chi_{D}(G)$. 
Consider a coloring of components of
$G \cup G$ 
with 
$\left[  \chi_{D}(G) \right]_1$
such a way that in the first component assign label
$i$ 
to 
$V_i$, $1\leq i \leq \chi_{D}(G)$, 
and in the second component assign label 
$k$ 
to 
$V_1$, 
label 
$1$ 
to
$V_k$ 
and labeling other color classes 
$V_i$ 
with 
$i$, $2\leq i(\neq k)\leq \chi_{D}(G)$. 
There exists an automorphism that images these two components to each other and preserves all the distinguishing color class labels. 
Thus, there is an edge between a vertex of 
$A_{V_j}$ 
and a vertex of
$B_{V_1}$.

Again, consider a coloring of the components of
$G \cup G$ 
such that each component is colored with one of the 
$\left[  \chi_{D}(G) \right]_1$ 
and
$\left[  \chi_{D}(G) \right]_2$ 
such a way that the first component is colored with 
$\left[  \chi_{D}(G) \right]_1$ 
and assign label 
$i$ 
to 
$V_i$, $1\leq i \leq \chi_{D}(G)$, 
and the second component is colored with 
$\left[  \chi_{D}(G) \right]_2$, assign label
$j$ 
to 
$U_1$, 
label
$1$ 
to
$U_j$  
and labeling other color classes 
$U_i$ 
with 
$i$, $2\leq i(\neq j) \leq \chi_{D}(G)$. 
The automorphism imaging these two components to each other and preserves all the distinguishing color class labels conclude that 
there is an edge between a vertex of 
$A_{U_1}$ 
and a vertex of
$B_{V_1}$, 
a contradiction. 

Therefore, there is no edge between 
$A_{V_i }$ 
and 
$B_{V_j }$, 
$1\leq i, j \leq \chi_{D}(G)$ 
and so 
$G$ 
is a disconnected graph, which is the required contradiction.
\end{proof} 
The following corollary is immediate by Theorem \ref{51}, Lemma \ref{lem2}, Lemma \ref{lem3} and Theorem \ref{01}.
\begin{corollary}
Let 
$G$ 
be a connected graph. Then 
$\chi_D(G\cup G)=\chi_D(G)+1$ 
if and only if
$G$ 
is uniquely distinguishing colorable of type 1.
\end{corollary}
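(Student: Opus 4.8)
The plan is to prove the two implications separately, with the bulk of the work resting on Theorem \ref{51} and Lemma \ref{lem3}.

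For the direction ($\Leftarrow$) there is essentially nothing to do. If $G$ is a connected uniquely distinguishing colorable graph of type $1$, then Lemma \ref{lem3} gives $\chi_D(G\cup G)=\chi_D(G)+1$ immediately, so this implication is a direct citation.

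For the direction ($\Rightarrow$), I would start from a connected $G$ with $\chi_D(G\cup G)=\chi_D(G)+1$ and first establish that $G$ is uniquely distinguishing colorable. This follows from the contrapositive of Theorem \ref{51}: were $G$ not uniquely distinguishing colorable, we would have $\chi_D(G\cup G)=\chi_D(G)$, contradicting the hypothesis. Since $G$ is now connected and uniquely distinguishing colorable, by definition it is of type $1$ or of type $2$, and it remains only to exclude type $2$.

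To rule out type $2$, I would unwind the defining property. If $G$ is of type $2$, then there is a labeling of the color classes of the two copies of $G$ in $G\cup G$, each copy carrying the unique distinguishing coloring of $G$ with $\chi_D(G)$ colors, for which no automorphism embeds one component into the other while preserving every color-class label. I claim this labeled coloring is already a distinguishing coloring of $G\cup G$ using only $\chi_D(G)$ colors. Indeed, every automorphism of $G\cup G$ either fixes each component setwise---and such an automorphism preserves all color classes only if its restriction to each component is trivial, since each component carries a distinguishing coloring of $G$---or it interchanges the two components, which is precisely what the chosen labeling forbids. Hence $\chi_D(G\cup G)\le\chi_D(G)$, and by the dichotomy recorded at the start of this section we get $\chi_D(G\cup G)=\chi_D(G)$, contradicting the hypothesis. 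Consequently $G$ must be of type $1$. As an alternative in the subcase where $G\cup G$ happens to be uniquely distinguishing colorable, Theorem \ref{01} forces $G$ to be of type $2$ and Lemma \ref{lem2} yields $\chi_D(G\cup G)=\chi_D(G)$; but the direct computation above handles all type-$2$ graphs uniformly, whether or not $G\cup G$ is uniquely distinguishing colorable.

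The only genuinely delicate point I anticipate is the type-$2$ case of the forward direction: one must translate the somewhat involved definition of type $2$ into an explicit $\chi_D(G)$-coloring of $G\cup G$ and then verify it is distinguishing by checking both the component-preserving and the component-swapping automorphisms. Once that verification is in place, both implications reduce to invoking Theorem \ref{51} and Lemma \ref{lem3}.
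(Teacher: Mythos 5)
Your proof is correct, and it follows the route the paper intends: the paper's entire ``proof'' is the one-line remark that the corollary is immediate from Theorem \ref{51}, Lemma \ref{lem2}, Lemma \ref{lem3} and Theorem \ref{01}, and your backward direction (a direct citation of Lemma \ref{lem3}) together with your derivation of unique distinguishing colorability in the forward direction (the contrapositive of Theorem \ref{51}) reproduces two of those four citations exactly. Where you genuinely diverge --- and improve on the paper --- is in excluding type 2. The paper's remaining citations, Theorem \ref{01} and Lemma \ref{lem2}, dispose of type 2 only in the subcase where $G\cup G$ happens to be uniquely distinguishing colorable (you note this alternative yourself); since the converse of Theorem \ref{01} fails, as the paper explicitly observes after Figure 4, a type-2 graph $G$ need not make $G\cup G$ uniquely distinguishing colorable, so that route alone leaves a gap. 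Your direct unwinding of the type-2 definition closes it uniformly: the witness labeling produces a proper $\chi_D(G)$-coloring of $G\cup G$ with no nontrivial color-preserving automorphism, because a component-preserving automorphism restricts on each copy to a color-preserving automorphism of a distinguishing coloring of $G$ (hence is trivial), while a component-swapping one is exactly what the labeling forbids; this gives $\chi_D(G\cup G)=\chi_D(G)$ via the dichotomy at the start of Section 5, the desired contradiction. Two small points of hygiene: your case split into component-preserving and component-swapping automorphisms uses the connectedness of $G$ (so the two copies are precisely the components), which your hypothesis supplies; and the dichotomy implicitly relies on $\chi_D(G\cup G)\geq \chi_D(G)$, which is worth recording --- restricting a distinguishing coloring of $G\cup G$ to one component yields a distinguishing coloring of $G$, since any nontrivial color-preserving automorphism of $G$ would extend by the identity on the other component.
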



In what follows, we characterise all connected graphs 
$G$ 
with
$\chi_D(G\cup G)=\chi_D(G)+1$ 
when
$\chi_{D}(G)=2$.

\begin{lemma}\label{lem5}
Let 
$G$ 
be a graph. For two vertices 
$v_1$ 
and 
$v_2$ 
of 
$G$, 
if there exist an automorphism 
$f$ 
of 
$G$ 
with 
$f(v_1)=v_2$, 
then there exists an automorphism 
$g$ 
of 
$G$ 
with
$g(v_1)= v_2$ 
and 
$g(v_2)= v_1$. 
\end{lemma}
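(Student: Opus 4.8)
The plan is to work inside the finite automorphism group $A=\mathrm{Aut}(G)$ and exploit the cyclic structure of the orbit of $v_1$ under the single automorphism $f$. Since $G$ is finite, $f$ has finite order and its action on $V(G)$ splits into disjoint cycles. As $f(v_1)=v_2$, the vertices $v_1$ and $v_2$ sit consecutively in one common cycle: there is an integer $k\ge 2$ with $f^{k}(v_1)=v_1$, and the orbit reads $v_1,\ v_2=f(v_1),\ f^{2}(v_1),\ \dots,\ f^{k-1}(v_1)$, the last of which maps back to $v_1$.

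First I would dispose of the short case. If $k=2$ then $f(v_2)=f^{2}(v_1)=v_1$, so $f$ itself is the desired $g$. More generally, the identity $f^{k-1}(v_2)=f^{k}(v_1)=v_1$ shows that $f^{k-1}$ always carries $v_2$ to $v_1$; the only obstruction is that $f^{k-1}$ need not simultaneously carry $v_1$ to $v_2$, since it sends $v_1$ to its cyclic predecessor $f^{-1}(v_1)$, and $f^{-1}(v_1)=v_2$ exactly when $k=2$. Hence no power of $f$ realizes the swap once $k>2$, and the entire content of the lemma is this case. The natural target is an automorphism that behaves like a \emph{reflection} of the orbit-cycle fixing the edge between $v_1$ and $v_2$, that is, an element of $A$ sending the ordered pair $(v_1,v_2)$ to $(v_2,v_1)$; equivalently, I must show that $(v_1,v_2)$ and $(v_2,v_1)$ lie in the same orbit of the induced $A$-action on ordered pairs. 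Concretely, I would look for an element of $A$ whose restriction to the orbit is the map $f^{i}(v_1)\mapsto f^{-i}(v_2)=f^{1-i}(v_1)$, check that it sends $v_1\mapsto v_2$ and $v_2\mapsto v_1$ as required, and verify that the undirected adjacency relations of $G$ are preserved so that this pairing genuinely extends to (or is realized by) an automorphism.

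The hard part, and the true crux, is exactly this last step: nothing in the bare group action forces $(v_1,v_2)$ and $(v_2,v_1)$ to share an $A$-orbit, so the argument must invoke honest graph structure — the undirectedness of edges, and in the intended application of this section the standing hypothesis $\chi_{D}(G)=2$ (hence bipartiteness). Indeed, for a graph whose automorphism group is cyclic of order $3$ the analogous swap does not exist, which is precisely why the power-of-$f$ approach cannot suffice; so I expect the proof to succeed only by exploiting an additional structural feature of $G$ (such as bipartiteness, via the parity of distances between $v_1$ and $v_2$) rather than the cyclic action alone, and pinning down which feature produces the reflection is where I anticipate the effort to concentrate.
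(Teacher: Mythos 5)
You have diagnosed this exactly right, and the conclusion deserves to be stated bluntly: the lemma as printed is false, and the paper's own proof fails at precisely the point you refused to cross. The paper argues just as your first paragraph does --- finiteness of the orbit yields a least $k$ with $f^{k}(v_1)=v_1$ and a cycle $v_1, v_2, \ldots, v_k$ with $f(v_i)=v_{i+1}$ and $f(v_k)=v_1$ --- and then closes with a single unsupported sentence: ``the automorphism $f$ appears a symmetry in the graph so that we can consider an automorphism $g$ with $g(v_1)=v_2$, $g(v_2)=v_1$, $g(v_3)=v_k$, $g(v_k)=v_3$, and $g(v_{k-i})=v_{3+i}$.'' That sentence simply \emph{posits} the reflection of the orbit-cycle; no argument is given that this vertex map preserves adjacency, either within the orbit or on the rest of $G$, and none can be given in this generality. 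Your observation that no power of $f$ works for $k>2$, and that nothing about a cyclic action on an undirected graph places $(v_1,v_2)$ and $(v_2,v_1)$ in the same orbit of $\mathrm{Aut}(G)$ on ordered pairs, is the whole story: your $\mathbb{Z}_3$ remark is an actual counterexample, not a cautionary heuristic. By Frucht's theorem there are finite simple graphs with $\mathrm{Aut}(G)\cong\mathbb{Z}_3$ (the smallest have nine vertices); taking $v_1$ in an orbit of size three and $v_2=f(v_1)$, the only automorphisms are $\mathrm{id}, f, f^{2}$, and none swaps $v_1$ and $v_2$. So your proposal is ``incomplete'' only in the sense that it correctly stalls where the statement itself breaks, whereas the paper's proof papers over the break.

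One refinement to your final paragraph, concerning where the rescue actually lies. In the theorem where the lemma is invoked, $G$ is connected with $\chi_D(G)=2$, hence bipartite with a unique proper $2$-coloring $\{V,U\}$, which is therefore the distinguishing coloring; consequently no nontrivial automorphism can preserve both parts setwise. But an automorphism $f$ of a connected bipartite graph either preserves or swaps the parts, and $f(v_1)=v_2$ with $v_1, v_2 \in V$ forces $f$ to preserve them --- so in the intended application such an $f$ is already the identity, a contradiction obtained with no case analysis at all. (Note also that an automorphism of odd order $>1$ must preserve the parts, so the bipartite $\mathbb{Z}_3$-examples one gets by subdividing a Frucht graph have $\chi_D > 2$ and fall outside the theorem's hypothesis; this is why the theorem survives even though the lemma does not.) The correct repair is therefore not to prove the lemma --- it cannot be proved --- but to delete it and close the relevant case of the theorem directly from $\chi_D(G)=2$, exactly along the parity/bipartiteness lines you anticipated.
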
 
\begin{proof}
Assume that the order of
$G$ 
is
$n$, 
$f^{2}(v_1)=f(f(v_1))=f(v_2)=v_3$ 
and
$f^{i}(v_1)=v_{i+1}$. 
We claim that there exists an integer
$k \leq n$ 
such that 
$f^{k}(v_1)=v_1$. 
Suppose that for any 
$k$, 
$f^{k}(v_1)\neq v_1$. 
So, let
$\ell$
be the smallest integer for which
$f^{\ell}(v_1)=v_i$ 
and 
$ i\leq \ell$.  
Thus, 
$f^{i-1}(v_1)=v_i=f^{\ell}(v_1)=f^{i-1}(f^{\ell-i+1}(v_1))$. 
This implies that 
$f^{\ell-i+1}(v_1)= v_1$, 
a contradiction. Hence, 
there are vertices 
$v_1, v_2, \ldots , v_k$ 
such that 
$f(v_i)=v_{i+1}$ 
and 
$f(v_k)=v_1$, 
for 
$1\leq i \leq k-1$.
The automorphism 
$f$ 
appears a symmetry in the graph so that we can consider an automorphism 
$g$ 
with  
$g(v_1)=v_2$, 
$g(v_2)=v_1$, 
$g(v_3)=v_k$, 
$g(v_k)=v_3$, 
and
$g(v_{k-i})=v_{3+i}$. 
\end{proof}

For a graph 
$G$ 
and 
$f \in {\rm Aut}(G)$, 
let
${\rm Fix}(f)=\{v \in V(G) \; | \; f(v)=v\}$.

\begin{theorem}
Let 
$G$ 
be a connected graph with 
$\chi_{D}(G)=2$. 
Then 
$\chi_{D}(G \cup G)=3$ 
if and only if 
$G$ 
is a bipartite graph with 
${\rm Aut}(G) \cong \mathbb{Z}_2 = \{id, f\}$, 
${\rm Fix}(f)= \emptyset$ 
and 
$f$ 
images the two parts 
of 
$G$
to each other.
\end{theorem}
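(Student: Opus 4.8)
The plan is to reduce the statement to a transparent condition on ${\rm Aut}(G)$ by invoking the machinery already assembled. Since $G$ is connected with $\chi_{D}(G)=2$, Remark~\ref{rem1}(3) tells us that $G$ is uniquely distinguishing colorable, and the corollary following Theorem~\ref{51} gives $\chi_{D}(G\cup G)=\chi_{D}(G)+1=3$ if and only if $G$ is of type~1. Moreover $\chi_{D}(G)=2$ forces a proper $2$-coloring, so $G$ is bipartite; being connected, its bipartition $\{V_1,V_2\}$ is the \emph{unique} proper $2$-coloring, and this bipartition is exactly the (unique) distinguishing coloring realizing $\chi_{D}(G)=2$. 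Thus the entire theorem collapses to proving the equivalence: $G$ is of type~1 $\iff$ ${\rm Aut}(G)=\{id,f\}\cong\mathbb{Z}_2$ with $f$ interchanging $V_1$ and $V_2$, in which case ${\rm Fix}(f)=\emptyset$ is automatic.

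Next I would unwind the type~1 condition directly. Because a connected bipartite graph has a unique bipartition, every automorphism of $G$ either fixes both parts setwise or interchanges them. Assigning the two colors $\{1,2\}$ to the two copies of $G$ in $G\cup G$ amounts, up to an overall relabeling, to exactly two cases: the two copies receive the same labeling of $\{V_1,V_2\}$, or opposite labelings. The same-labeling case is always witnessed by the identity map between the copies, so it imposes nothing. The opposite-labeling case instead demands an automorphism $f$ of $G$ with $f(V_1)=V_2$ and $f(V_2)=V_1$. Consequently $G$ is of type~1 precisely when a part-swapping automorphism exists; if none exists (for instance when $G$ is asymmetric), $G$ is of type~2 and $\chi_{D}(G\cup G)=2$.

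To pin down the exact group structure I would exploit that $\{V_1,V_2\}$ is a distinguishing coloring, i.e.\ no non-trivial automorphism fixes both parts. Suppose a part-swapping $f$ exists. Then $f^{2}$ fixes both parts, so $f^{2}=id$, making $f$ an involution; and since $f$ carries each part onto the other, no vertex is fixed, giving ${\rm Fix}(f)=\emptyset$. If $g$ is any non-trivial automorphism, it cannot fix both parts, hence it swaps them, so $fg$ fixes both parts and therefore $fg=id$, forcing $g=f$. This yields ${\rm Aut}(G)=\{id,f\}\cong\mathbb{Z}_2$. Conversely, if ${\rm Aut}(G)=\{id,f\}$ with $f$ interchanging the parts, then $f$ resolves the opposite-labeling case and the identity resolves the same-labeling case, so $G$ is of type~1. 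Chaining this equivalence with the reduction of the first paragraph completes both implications.

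The main obstacle is the careful bookkeeping hidden in the definition of type~1: one must recognize that, ranging over all labelings of the two components, the only non-vacuous requirement is the existence of a single part-swapping automorphism, the identity handling everything else. The second delicate point is the uniqueness-and-involution argument, since it is precisely there — through the distinguishing property of the bipartition — that the coarse condition ``some automorphism swaps the parts'' is sharpened to the exact structure ${\rm Aut}(G)\cong\mathbb{Z}_2$ together with the fixed-point-free condition ${\rm Fix}(f)=\emptyset$.
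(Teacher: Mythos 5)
Your proposal is correct, but it follows a genuinely different route from the paper's. The paper proves the forward direction by hand: it assumes some automorphism moves a vertex inside one part, invokes its Lemma~\ref{lem5} to upgrade this to a transposition $f(v_1)=v_2$, $f(v_2)=v_1$, and then rules out the configuration through a two-case analysis of distances along a $v_1$--$v_2$ path (citing an observation of Erwin and Harary), using the parity mismatch between within-part and cross-part distances; it then gets ${\rm Fix}(f)=\emptyset$ by another parity argument and uniqueness of the nontrivial automorphism by composing two part-swaps. You instead observe that a connected bipartite graph has a \emph{unique} bipartition, so every automorphism either preserves both parts setwise or swaps them, and that the distinguishing property of the bipartition kills the part-preserving nontrivial case instantly --- this one line replaces the paper's entire path/parity case analysis and removes any need for Lemma~\ref{lem5} or the Erwin--Harary observation (a real gain, since the paper's proof of Lemma~\ref{lem5} is itself shaky: orbits need not admit swapping automorphisms in general, e.g.\ when ${\rm Aut}(G)\cong\mathbb{Z}_3$). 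Your other structural move --- routing both implications through the corollary ``$\chi_D(G\cup G)=\chi_D(G)+1$ iff $G$ is uniquely distinguishing colorable of type~1,'' and then unwinding the type-1 condition for two colors into the single non-vacuous requirement that a part-swapping automorphism exist --- is also cleaner than the paper, which uses the type-1 machinery only for the converse and argues the forward direction from scratch. Your derivation of the exact group structure ($f^2$ preserves both parts, hence $f^2=id$; any nontrivial $g$ must swap parts, hence $fg=id$ and $g=f$) is the same idea the paper compresses into its closing parenthetical, but stated correctly and completely. In short: both arguments are sound in outline, but yours is shorter, avoids a questionable auxiliary lemma, and isolates the one fact (uniqueness of the bipartition of a connected bipartite graph) that makes the theorem transparent.
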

\begin{proof}
$(\Rightarrow)$
By Remark \ref{rem1}(3), 
$G$ 
is uniquely distinguishing colorable. Let 
$V$ 
and 
$U$ 
denote the parts of 
$G$. 
The distinguishing coloring of 
$G$ 
is 
$\{V, U\}$. 
The automorphism group of 
$G$ 
is not trivial (since otherwise 
$\chi_{D}(G \cup G)= \chi_{D}(G)$).
Assume first that there exists an automorphism 
$f$ 
of 
$G$ 
with  
$f(v_1)= v_2$, 
for two vertices 
$v_1$ 
and 
$v_2$ 
of 
$V$. 
By Lemma \ref{lem5}, without loss of generality, we may assume that 
$f(v_2)= v_1$.
Since 
$f$ 
does not preserve the coloring, there are vertices 
$v_3 \in V$ 
and 
$u_1 \in U$ 
such that 
$f(v_3)= u_1$. 
Let 
$P$ 
be a path between 
$v_1$ 
and 
$v_2$. 
Observation 16 \cite{erwin} implies that one of vertices 
$v_3$ 
and 
$u_1$ 
cannot be in 
$P$ 
and the other be out of the path. We now divide the proof into two cases. 
\begin{itemize}
\item[{\bf Case 1.}] $v_3, u_1 \in V(P)$. 
The distance between two vertices in the same parts and different parts is even and odd, respectively. Without loss of generality, we may assume that 
${\rm d}(v_3, v_2)<{\rm d}(v_3, v_1)$ 
in 
$P$. 
Since 
$f$
flips path 
$P$, 
the 
$v_1-u_1$ path 
and 
$v_2-v_3$ path 
must be of the same size in 
$P$. 
But the size of 
$v_1-u_1$ path is odd
and 
$v_2-v_3$ path is even.

\item[{\bf Case 2.}] $v_3, u_1 \notin V(P)$. 
In this case, if there exists a path including 
$v_1$ 
and 
$v_2$ 
that 
$f$
flips this path and moves at least one of 
$v_3$ 
and 
$u_1$, 
similarity to case 1, it is impossible. Let $P'$ be a path including 
$v_1$ 
and 
$v_2$.  
Assume that when 
$P'$
flips, does not move 
$v_3$ 
and 
$u_1$. 
Thus 
$v_3$ 
and 
$u_1$ 
have the same distance from 
$v_1$ 
and 
$v_2$, 
a contradiction.
\end{itemize}
So, any automorphism image the two parts 
of 
$G$
together. Let 
$f$ 
be such an automorphism. Suppose that 
$w \in {\rm Fix}(f)$  
and 
$f(v)=u$, 
for vertices 
$v\in V$
and 
$u \in U$.
Thus, 
${\rm d}(v, w)={\rm d}(u, w)$. 
But, if
$w \in V$  ($w \in U$) 
the distance between 
$w$ 
and 
$v$
is even (odd) and the distance between 
$w$ 
and 
$u$
is odd (even). This implies that 
${\rm Fix}(f)= \emptyset$ 
and there is only an non-trivial automorphism of 
$G$. 
(Note that if there are two automorphisms 
$f$ 
and 
$g$ 
that images the two parts of 
$G$
together, then there are vertices 
$v_1 \in V$ 
and 
$u_1, u_2 \in U$ 
that 
$f(v_1)=u_1$ 
and 
$g(v_1)=u_2$. 
Therefore, 
$g(f^{-1}(u_1))=u_2$ 
and it is impossible.)

$(\Leftarrow)$ Since 
$G$ 
is a connected bipartite graph and there is an automorphism that images the two parts 
of 
$G$
together, 
$G$ 
is of type 1 and so $\chi_{D}(G \cup G)= \chi_{D}(G)+1$. 
\end{proof}

In following, we investigate all graphs $G$ 
such that 
$\chi_D(G\cup G)=\chi_D(G)$, 
when
$\chi_{D}(G)=k$, 
for
$k\in \{|V(G)|-2, |V(G)|-1, |V(G)|\}$.
To do this, we begin with Theorem \ref{e}, Theorem \ref{e0} and Theorem \ref{e1}, in which all graphs 
of order $n$ with distinguishing chromatic number $n$, $n-1$ and $n-2$ were charaterized \cite{Collins, dis 1 2}. 
We first state some necessary preliminaries.

For any graph  $G$ with vertices  $(v_1, \ldots, v_n)$ and for any collection of vertex-disjoint graphs 
$H_1, \ldots, H_n$, let $G(H_1, \ldots, H_n)$ denote the graph obtained from $G$ by replacing each
$v_i$ with a copy of $H_i$ and replacing each edge $v_i v_j$ by $H_i \vee H_j$. 
If an $H_i$ is vacuous, i.e., $H_i = \emptyset$, then replacing $v_i$ by $\emptyset$ refers to deleting 
$v_i$ and all edges incident to it.  
Note that  the substituted $H$ can be an independent set; i.e. both the empty set and independent sets are viewed as ``complete multipartite" graphs.
We present three defined graphs 
$\hat{G}_5$, $\hat{G}_6$ and $\hat{G}_7$ 
along with a class of labelled graphs {\rsfs G}$_3$ consisting of two non-isomorphic graphs. 
The labelled graphs 
$ \hat{G}_{5} $, $ \hat{G}_{6} $
and 
$ \hat{G}_{7} $ 
have vertices  
$ (v_1, v_2, v_3, v_4) $, 
$ (v_1, v_2, v_3, v_4, v_5) $ 
and 
$ (v_1, v_2, v_3, v_4, v_5, v_6) $
respectively, while a labelled graph  
$ \hat{G}_{3} $ 
belonging to the class 
{\rsfs G}$_3$  
has vertices $ (v_1, v_2, v_3, v_4, v_5)$, 
see Figure 6 and Figure 7. 
Furthermore, define 
$\hat{K}_2$ 
and 
$\hat{K}_3$ 
to be the labelled complete graphs of orders two and three respectively, where 
$\hat{K}_{2}(v_1, v_2)$ 
has vertices 
$(v_1, v_2)$ 
and 
$\hat{K}_{3}(v_1, v_2, v_3)$ 
has vertices
$(v_1, v_2, v_3)$. 
In particular, if 
$H_1$ 
and 
$H_2$ 
are nonvacuous complete multipartite graphs, then
$\hat{K}_{2}(H_1, H_2)$
represents a complete multipartite graph with at least two parts. 

\begin{theorem}\label{e}\cite[Theorem 2.3]{Collins}
Let $G$ be a graph. Then $\chi_{D}(G)=|V(G)|$ if and only of $G$ is a complete multipartite graph.
\end{theorem}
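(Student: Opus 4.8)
The plan is to work directly from the definitions, showing that $\chi_{D}(G)=|V(G)|$ fails precisely when $G$ admits a proper distinguishing coloring with $|V(G)|-1$ colors, and then to analyze exactly what such a coloring would look like. First I would record the trivial upper bound: assigning each of the $n=|V(G)|$ vertices a distinct color is always a proper distinguishing coloring, since every color class is a singleton and no nontrivial automorphism can fix every singleton class setwise. Hence $\chi_{D}(G)\le n$ for every graph, and $\chi_{D}(G)=n$ if and only if no proper distinguishing coloring uses only $n-1$ colors.

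Next I would characterize the candidate $(n-1)$-colorings. Any coloring of $V(G)$ with exactly $n-1$ nonempty classes has precisely one class of size two, say $\{u,v\}$, and all remaining classes are singletons. Properness forces $u\not\sim v$. For the distinguishing condition, observe that any automorphism $f$ preserving all color classes must fix each singleton vertex, so $f$ is either the identity or the transposition that swaps $u$ and $v$ while fixing everything else. That transposition is a graph automorphism exactly when $N(u)=N(v)$, i.e. when $u$ and $v$ are non-adjacent twins. Therefore the $(n-1)$-coloring with doubleton $\{u,v\}$ is distinguishing if and only if $u\not\sim v$ and $N(u)\ne N(v)$. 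Combining with the first step, $\chi_{D}(G)=n$ if and only if every pair of non-adjacent vertices of $G$ has the same neighborhood.

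Finally I would show this neighborhood condition is equivalent to $G$ being complete multipartite. For the forward direction, define a relation on $V(G)$ by setting $u\equiv v$ iff $u=v$ or ($u\not\sim v$ and $N(u)=N(v)$). Reflexivity and symmetry are immediate, and transitivity follows from the hypothesis: if $u\equiv v$ and $v\equiv w$ with distinct vertices, then $N(u)=N(v)=N(w)$, and since $w\notin N(v)=N(u)$ we also get $u\not\sim w$, whence $u\equiv w$. Each equivalence class is an independent set by construction, and any two vertices lying in distinct classes must be adjacent, for otherwise they would be non-adjacent with equal neighborhoods and hence equivalent. Thus the equivalence classes are exactly the parts of a complete multipartite graph. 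The converse is routine: in a complete multipartite graph two vertices are non-adjacent precisely when they lie in a common part, in which case they share the neighborhood consisting of all vertices outside that part, so the condition holds.

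The main obstacle I anticipate is the careful bookkeeping in the second step, namely verifying that the only potentially coloring-preserving nontrivial automorphism is the single transposition $(u\ v)$ and correctly reducing the distinguishing requirement to the twin condition $N(u)\ne N(v)$. The transitivity verification in the last step is the other delicate point, since one must \emph{derive} the non-adjacency of $u$ and $w$ from the neighborhood equalities rather than assume it.
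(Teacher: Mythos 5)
This paper states the theorem only as a citation to Collins and Trenk \cite{Collins} and supplies no proof of its own, so there is no internal argument to compare against; judged on its merits, your proof is correct and is essentially a faithful reconstruction of the standard one. The single point that needs patching is your reduction of $\chi_{D}(G)=n$ to the nonexistence of a proper distinguishing coloring with exactly $n-1$ classes: a priori $\chi_{D}(G)$ could be attained by a coloring with $n-2$ or fewer classes, and your case analysis treats only the $(n-1)$-class case, so the ``if'' direction of that reduction tacitly uses a refinement fact you never state. The fix costs one line and uses only tools you already have: in any proper coloring with fewer than $n$ classes some class contains two vertices $u\neq v$, properness gives $u\not\sim v$, and under the hypothesis that every non-adjacent pair satisfies $N(u)=N(v)$ the transposition $(u\,v)$, which fixes all other vertices and preserves every class because $u$ and $v$ lie in a common class, is a nontrivial color-preserving automorphism; thus the complete multipartite direction defeats all colorings with fewer than $n$ colors simultaneously, not merely the $(n-1)$-class ones. (Alternatively, prove the easy lemma that a proper distinguishing $k$-coloring refines to a proper distinguishing $(k+1)$-coloring by splitting off a singleton.) The two steps you flagged as delicate are both handled correctly: the transposition $(u\,v)$ with $u\not\sim v$ is an automorphism iff $N(u)=N(v)$, and your transitivity argument rightly derives $u\not\sim w$ from $w\notin N(v)=N(u)$ rather than assuming it.
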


\begin{theorem}\label{e0}\cite[Theorem 3.2]{dis 1 2}
Let 
$G$ 
be a graph of order 
$n > 3$. 
Then 
$\chi_D(G) = n-1$ 
if and only if 
$G$ 
is the join of a complete multipartite graph (possibly vacuous) with one of the following: 
\begin{itemize}
\item[(1)]
$2K_2$, or  
\item[(2)] 
$H\cup K_1$, 
where 
$H$ 
is a complete multipartite graph with at least two parts.
\end{itemize}
\end{theorem}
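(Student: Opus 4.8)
The plan is to reduce the whole theorem to a single additivity principle for $\chi_D$ under the join operation, used together with Theorem~\ref{e}. First I would prove the \emph{join additivity lemma}: if $R = A \vee B$, then $\chi_D(R) = \chi_D(A) + \chi_D(B)$. The reason is that in a join every vertex of $A$ is adjacent to every vertex of $B$, so no independent set, and hence no color class, can meet both sides; thus $A$ and $B$ receive disjoint palettes. Consequently a color-preserving automorphism cannot send a vertex of $A$ to a vertex of $B$ (their colors would have to coincide across disjoint palettes), so every color-preserving automorphism restricts to an automorphism of $A$ and of $B$. Therefore a coloring of $R$ is distinguishing if and only if its restrictions to $A$ and $B$ are each distinguishing, and $\chi_D$ is exactly additive over joins, with no correction term even when $A \cong B$ (the cross-swap is automatically killed by the disjoint palettes).

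Next I would set up the structural reduction. Call a vertex \emph{near-universal} if it is adjacent to every vertex outside its own false-twin class, and let $M$ be the set of near-universal vertices. A short check shows that $M$ induces a complete multipartite graph (its parts being the false-twin classes inside $M$) and that $G = M \vee R$, where $R = G - M$ is \emph{core-free}, meaning it has no near-universal vertex. Since $M$ is complete multipartite, Theorem~\ref{e} gives $\chi_D(M) = |M|$, and the join lemma yields $\chi_D(G) = |M| + \chi_D(R)$; hence $\chi_D(G) = n-1$ is equivalent to $\chi_D(R) = |R| - 1$ with $R$ core-free. I then decompose the complement $\overline{R}$ into its connected components $C_1, \dots, C_k$, so that $R = R[C_1] \vee \cdots \vee R[C_k]$. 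Each $R[C_i]$ is co-connected (its complement $\overline{R}[C_i]$ is connected) with $|C_i| \ge 2$ (core-freeness rules out isolated vertices of $\overline{R}$), and being co-connected it is not complete multipartite, so $\chi_D(R[C_i]) \le |C_i| - 1$ by Theorem~\ref{e}. Additivity gives $\chi_D(R) \le |R| - k$, which forces $k = 1$. Thus the entire problem collapses to classifying the \emph{co-connected, core-free} graphs $R$ with $\chi_D(R) = |R| - 1$.

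For the ($\Leftarrow$) direction it then suffices to record the two computations $\chi_D(2K_2) = 3 = |2K_2| - 1$ and $\chi_D(H \cup K_1) = |H| = |H \cup K_1| - 1$ when $H$ is complete multipartite with at least two parts; in the latter the isolated vertex is the unique vertex of degree $0$, hence fixed by every automorphism, and may reuse one color of $H$. Both $2K_2$ and $H \cup K_1$ are co-connected and core-free, so combining these with $\chi_D(G) = |M| + \chi_D(R)$ gives $\chi_D(M \vee 2K_2) = \chi_D(M \vee (H \cup K_1)) = n - 1$, as required.

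The remaining and hardest step, the ($\Rightarrow$) direction, is to show that a co-connected core-free $R$ with $\chi_D(R) = |R| - 1$ must be $2K_2$ or $H \cup K_1$. In an optimal coloring exactly one color class is a pair $\{x,y\}$ with $x \not\sim y$ and $x, y$ not false twins, all other classes being singletons; the hypothesis $\chi_D(R) \neq |R| - 2$ says precisely that one cannot perform two independent color-merges, nor a single triple-merge, while remaining distinguishing. I would translate this rigidity into the blow-up structure of $R$ over its false-twin classes (substituting an independent set for each class of the twin-quotient) and argue that the room for a second merge is obstructed only in the two listed configurations: either $\overline{R}$ is a single $4$-cycle, giving $R = 2K_2$, or $R$ has an isolated vertex together with a complete multipartite remainder of at least two parts, giving $R = H \cup K_1$. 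Concretely, any other co-connected core-free $R$ should contain either two disjoint non-adjacent non-twin pairs that can be merged simultaneously, or an independent triple that can be merged, in each case without creating a color-preserving automorphism, producing a distinguishing $(|R|-2)$-coloring and contradicting $\chi_D(R) = |R| - 1$. Controlling this case analysis, in particular tracking how the forced pair $\{x,y\}$ interacts with the induced $K_2 \cup K_1$ witnesses of non-complete-multipartiteness and verifying that a proposed second merge is genuinely distinguishing rather than secretly symmetric, is where the real work concentrates and is the step I expect to be the main obstacle.
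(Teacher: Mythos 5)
First, a point of context: the paper does not prove this statement at all --- it is imported verbatim as \cite[Theorem 3.2]{dis 1 2} (Cavers--Seyffarth), so there is no internal proof to compare against. Judged on its own terms, your skeleton is sound up to its final step. The join-additivity lemma $\chi_D(A\vee B)=\chi_D(A)+\chi_D(B)$ is correct as you argue it (color classes cannot cross a join, so palettes on the two sides are disjoint, color-preserving automorphisms preserve sides, and distinguishing colorings of a join are exactly the side-wise distinguishing colorings); this is in fact a known lemma underlying the cited proof. The near-universal core decomposition $G=M\vee R$ with $R$ core-free, the resulting equivalence $\chi_D(G)=n-1\iff\chi_D(R)=|R|-1$, the reduction via components of $\overline{R}$ to the co-connected case, and the two backward-direction computations $\chi_D(2K_2)=3$ and $\chi_D(H\cup K_1)=|H|$ are all correct. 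One small slip: your inference ``co-connected implies not complete multipartite'' is false for independent sets, since $\overline{K_m}$ is co-connected for $m\ge 2$ ($K_m$ is connected); to rule out an independent-set join factor $\overline{K_m}$ of $R$ you must invoke core-freeness again (such a factor is a false-twin class all of whose vertices are near-universal), not merely the absence of isolated vertices in $\overline{R}$. The conclusion $\chi_D(R[C_i])\le |C_i|-1$ then does follow from Theorem~\ref{e}.

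The genuine gap is that the forward direction --- classifying co-connected, core-free $R$ with $\chi_D(R)=|R|-1$ as exactly $2K_2$ or $H\cup K_1$ --- is not proved but only announced as a plan, and you flag it yourself as the expected obstacle. This is the entire mathematical content of the theorem; everything before it is routine reduction. Moreover, the plan as stated is not yet a correct criterion: producing ``two disjoint non-adjacent non-twin pairs'' does not by itself yield a distinguishing $(|R|-2)$-coloring, because the simultaneous double swap $(u_1v_1)(u_2v_2)$ can be an automorphism even when neither single transposition is --- $2K_2$ itself witnesses this with its two diagonal pairs, and any correct case analysis must track exactly when such products of transpositions (and $3$-cycles on an independent triple) survive. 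So the dichotomy ``any other $R$ admits a second merge'' is a conjecture standing in for the proof, not a proof. To complete the argument you would need to carry out this classification (e.g., working in the twin-quotient/blow-up structure of $\overline{R}$ and showing that obstruction of every pair of merges and every triple merge forces $\overline{R}\cong C_4$ or $\overline{R}\cong \overline{H}\vee K_1$), which is essentially the page of combinatorial work done in \cite{dis 1 2}; as submitted, the proposal establishes the easy direction and a correct reduction, but not the theorem.
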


\begin{figure}[h!] \hspace{25mm}
\subfloat{\begin{tikzpicture}[scale=.4, thick, vertex/.style={scale=.5, ball color=black, circle, text=white, minimum size=.2mm},
Ledge/.style={to path={
.. controls +(45:2) and +(135:2) .. (\tikztotarget) \tikztonodes}}
]
\node[label={[label distance=.1cm]90:$v_1$‌}] (a1) at (-2, 4) [vertex] {};‌
\node[label={[label distance=.001cm]-90:$v_2$‌}] (a2) at (-2, 0) [vertex] {};‌
\node[label={[label distance=.001cm]-90:$v_3$}] (a3) at (2, 0) [vertex] {};‌
\node[label={[label distance=.001cm]90:$v_4$}] (a4) at (2, 4) [vertex] {};‌
\draw(a1) to (a2) ;
\draw(a2) to (a3) ;
\draw(a3) to (a4) ; 
\end{tikzpicture}}
\subfloat{
\hspace{45mm}
\begin{tikzpicture}[scale=.4 ,thick, vertex/.style={scale=.5, ball color=black, circle, text=white, minimum size=.2mm},
Ledge/.style={to path={
.. controls +(45:2) and +(135:2) .. (\tikztotarget) \tikztonodes}}
]
\node[label={[label distance=.1cm]90:$v_1$‌}] (a1) at (-2, 4) [vertex] {};‌
\node[label={[label distance=.001cm]-90:$v_2$‌}] (a2) at (-2, 0) [vertex] {};‌
\node[label={[label distance=.001cm]-90:$v_3$}] (a3) at (2, -1) [vertex] {};‌
\node[label={[label distance=.001cm]90:$v_4$}] (a4) at (2, 1) [vertex] {};‌
\node[label={[label distance=.001cm]90:$v_5$}] (a5) at (2, 4) [vertex] {};‌
\draw(a1) to (a2) ;
\draw(a2) to (a3) ;
\draw(a3) to (a4) to (a2) ; 
\end{tikzpicture}
} 
\begin{center}
\caption{\  Graphs $\hat{G}_{5}(v_1, v_2, v_3, v_4)$ and $\hat{G}_{6}(v_1, v_2, v_3, v_4, v_5)$.}
\end{center}
\end{figure}
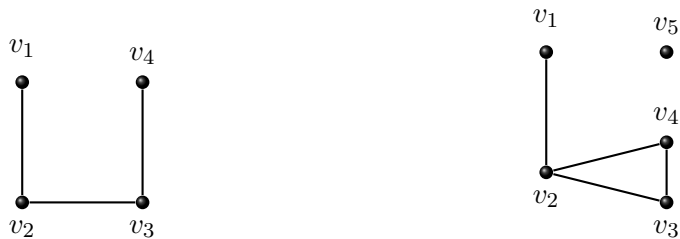 
 \begin{figure}[h!] \hspace{25mm}
\subfloat{\begin{tikzpicture}[scale=.6, thick, vertex/.style={scale=.5, ball color=black, circle, text=white, minimum size=.2mm},
Ledge/.style={to path={
.. controls +(45:2) and +(135:2) .. (\tikztotarget) \tikztonodes}}
]
\node[label={[label distance=.001cm]90:$v_1$‌}] (a1) at (-2, 1) [vertex] {};‌
\node[label={[label distance=.001cm]-90:$v_2$‌}] (a2) at (-1, 0) [vertex] {};‌
\node[label={[label distance=.001cm]-90:$v_3$}] (a3) at (1, 0) [vertex] {};‌
\node[label={[label distance=.001cm]90:$v_4$}] (a4) at (2, 1) [vertex] {};‌
\node[label={[label distance=.001cm]90:$v_5$}] (a5) at (1, 3) [vertex] {};‌
\node[label={[label distance=.001cm]90:$v_6$}] (a6) at (-1, 3) [vertex] {};
\draw(a1) to (a2) to (a3) to (a4) to (a5) to (a6) to (a1) ;
\draw(a4) to (a1) to (a3);
\draw(a4) to (a2); 
\draw(a6) to (a2); 
\draw(a5) to (a3);  
\end{tikzpicture}}
\subfloat{
\hspace{25mm}
\begin{tikzpicture}[scale=1 ,thick, vertex/.style={scale=.5, ball color=black, circle, text=white, minimum size=.2mm},
Ledge/.style={to path={
.. controls +(45:2) and +(135:2) .. (\tikztotarget) \tikztonodes}}
]
\node[label={[label distance=.1cm]90:$v_1$‌}] (a1) at (0, 0) [vertex] {};‌
\node[label={[label distance=.001cm]90:$v_2$‌}] (a2) at (1, 1) [vertex] {};‌
\node[label={[label distance=.001cm]-90:$v_3$}] (a3) at (1, -1) [vertex] {};‌
\node[label={[label distance=.001cm]90:$v_4$}] (a4) at (3, 1) [vertex] {};‌
\node[label={[label distance=.001cm]-90:$v_5$}] (a5) at (3, -1) [vertex] {};‌ ‌ 
\node[label={[label distance=.001cm]-90:$\vdots$}] (a) at (3, 0) [] {};‌
\node[label={[label distance=.001cm]-90:$\vdots$}] (a) at (3, .4) [] {};‌
\node[label={[label distance=.001cm]-90:$\vdots$}] (a) at (3, .8) [] {};‌
\node[label={[label distance=.001cm]-90:$\vdots$}] (a) at (3, 1.2) [] {};‌
\node[label={[label distance=.001cm]-90:$\vdots$}] (a) at (3, 1.6) [] {};‌
\draw(a3) to (a5) ;
\draw(a2) to (a4) ; 
\draw(a2) to (a3) ;
\draw(a3) to (a1) to (a2) ; 
\end{tikzpicture}
} 
\begin{center}
\caption{\  Graphs $\hat{G}_{7}(v_1, v_2, v_3, v_4, v_5, v_6)$  and {\rsfs G}$_3$ $(v_1, v_2, v_3, v_4, v_5)$.}\label{fig3}
\end{center}
\end{figure} 
\newpage
\begin{theorem}\label{e1}\cite[Theorem 3.5]{dis 1 2} 
Let 
$G$ 
be a graph of order 
$n > 4$. 
Then $\chi_{D}(G) = n - 2$ 
if and only if 
$G$ 
is the join of a complete multipartite graph (possibly vacuous) with one of the following: 
\begin{small}
\begin{multicols}{2}
\begin{enumerate}
\item[(a)] $ P_5 $
\item[(c)] $ C_6 $
\item[(e)] $ \overline{K_r} \cup K_2 $, for $ r \geq 2$
\item[(g)] $ \hat{K}_{3}(H_1, H_2, H_3) \cup K_2 $
\item[(i)] $ 2K_2 \vee 2K_2 $ 
\item[(k)] $ 2K_2 \cup K_1 $ 
\item[(m)]$\hat{G}_{3}(H_1, H_2, H_3, K_1, K_1)$, for $\hat{G}_{3} \in${\rsfs G}$_3$
\item[(o)] $\hat{G}_{5}(K_1, H_1, H_2, K_1)$
\item[(q)] $\hat{G}_{7}(H_1, H_2, H_3, H_4, K_1, K_1)$ 
\item[(b)] $ C_5 $ 
\item[(d)] $ 2K_3 $
\item[(f)] $\hat{K}_{2}(\overline{K_r}, H_1) \cup K_2$, for $ r\geq 2 $
\item[(h)] $\hat{K}_{2}(H_1, H_2) \cup \overline{K_2}$ 
\item[(j)] $ 2K_2 \vee (\hat{K}_{2}(H_1, H_2) \cup K_1) $
\item[(l)] $ (2K_2 \vee H_1) \cup K_1 $
\item[(n)] $\hat{G}_{5}(H_1, H_2, K_1, K_1)$
\item[(p)] $\hat{G}_{6}(K_1, H_1, H_2, H_3, K_1)$ 
\end{enumerate}
\end{multicols}
\end{small}
where each of 
$H_1, H_2, H_3, H_4$ 
is a nonvacuous complete multipartite graph. 
\end{theorem}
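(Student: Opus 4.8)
The plan is to reduce the classification to a finite problem about ``cores'' and then carry out a case analysis, bootstrapping from the already-established cases $\chi_{D}(G)=n$ (Theorem \ref{e}) and $\chi_{D}(G)=n-1$ (Theorem \ref{e0}). The first and most important step is to isolate the complete multipartite factor by passing to the complement: a join $M \vee C$ with $M$ complete multipartite corresponds to a disjoint union $\overline{M} \cup \overline{C}$ in which $\overline{M}$ is a union of cliques. So I would let $M$ be the union of all connected components of $\overline{G}$ that are complete graphs and let $C$ be induced on the remaining vertices, so that $G = M \vee C$ with $M$ complete multipartite and $\overline{C}$ having no complete component; call such a $C$ a core. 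Since ``the component of $x$ in $\overline{G}$ is a clique'' is an isomorphism invariant and $\mathrm{Aut}(G)=\mathrm{Aut}(\overline{G})$, every automorphism preserves the partition into $M$ and $C$; combined with the fact that every vertex of $M$ is adjacent to every vertex of $C$, this yields $\mathrm{Aut}(G)\cong \mathrm{Aut}(M)\times \mathrm{Aut}(C)$ and hence $\chi_{D}(G)=\chi_{D}(M)+\chi_{D}(C)=|M|+\chi_{D}(C)$, using $\chi_{D}(M)=|M|$ from Theorem \ref{e}. Thus $\chi_{D}(G)=n-2$ if and only if the core satisfies $\chi_{D}(C)=|C|-2$, and the whole theorem reduces to classifying cores with this property.

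Next I would analyze the deficiency. A distinguishing coloring of $C$ with $|C|-2$ colors has total excess $2$ over the all-singleton coloring, so either one color class has three vertices and the rest are singletons, or two color classes have two vertices each and the rest are singletons; in either case the repeated vertices form independent sets. To account for the complete multipartite substitutions $H_i$ appearing in the list, I would apply a modular (twin) reduction: within $C$, contract each maximal module of false twins, which in the complement is a true-twin class. A false-twin module of order $h$ must receive $h$ distinct colors and contributes exactly $h$ to $\chi_{D}$, so inflating a skeleton vertex into a complete multipartite $H_i$ raises both $|C|$ and $\chi_{D}(C)$ by $|H_i|-1$ and preserves the deficiency. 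Consequently the deficiency-$2$ condition is carried by a bounded twin-reduced skeleton, and every core is obtained by inflating certain marked vertices of such a skeleton into complete multipartite graphs while the remaining vertices stay rigid (the $K_1$ slots). The task then becomes a finite search: determine all twin-reduced cores whose inflation can achieve deficiency exactly $2$. Theorem \ref{e0} provides the neighbouring, already-solved regime against which this search is organised, since the skeletons arising here are close relatives of the deficiency-$1$ skeletons classified there.

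For the converse (the ``if'' direction) I would simply verify, for each of the seventeen families, that the listed core has distinguishing chromatic number equal to its order minus two, and then invoke the join formula $\chi_{D}(M\vee C)=|M|+\chi_{D}(C)$ established above; this is a direct computation for $P_5, C_5, C_6, 2K_3$ and the small labelled graphs $\hat{G}_{3}, \hat{G}_{5}, \hat{G}_{6}, \hat{G}_{7}$, together with the observation that every inflation by a complete multipartite $H_i$ preserves the deficiency. The genuinely hard part will be the completeness of the enumeration in the forward direction: one must show that the two deficiency patterns, once the rigid skeleton is extracted, force exactly the seventeen configurations and no others, and --- equally delicate --- that each configuration is actually rigid enough that the claimed coloring is distinguishing (no unwanted automorphism survives) while admitting no distinguishing coloring with $|C|-3$ colors. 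Controlling the adjacencies between the two or three specially coloured vertices and the rest of the core, and ruling out spurious automorphisms produced by accidental symmetries among the inflated modules $H_i$, is where the bulk of the case analysis lies; this is the step I expect to dominate the proof.
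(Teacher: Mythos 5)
First, a structural point: this paper does not prove Theorem~\ref{e1} at all --- it is quoted verbatim from Cavers and Seyffarth \cite[Theorem 3.5]{dis 1 2} and used as a black box, so there is no internal proof to compare your attempt against; your proposal has to be judged as a proof of the cited classification itself. Your reduction framework is sound as far as it goes: taking $M$ to be the union of the complete components of $\overline{G}$ gives $G = M \vee C$ with the membership of $M$ an isomorphism invariant, hence ${\rm Aut}(G) \cong {\rm Aut}(M) \times {\rm Aut}(C)$; since no color class of a proper coloring can cross a join, $\chi_D(G) = |M| + \chi_D(C)$ by Theorem~\ref{e}, and the problem correctly reduces to classifying cores with $\chi_D(C) = |C| - 2$, with the two class-size patterns $(3,1,\ldots,1)$ and $(2,2,1,\ldots,1)$ as you say.

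The gap is that the two claims carrying the actual content are asserted rather than proved. One: your inflation lemma --- that replacing a marked vertex by a complete multipartite $H_i$ shifts $|C|$ and $\chi_D(C)$ by the same amount --- is not purely formal. Vertices of such a module must indeed receive $|H_i|$ distinct colors (they are false twins), but unlike $M$ the module $H_i$ is not joined to everything, so its colors may be reused elsewhere in $C$; deficiency preservation therefore needs an argument in both directions, and the forward direction additionally requires showing that every deficiency-$2$ core really is an inflation of a rigid skeleton, i.e., that no color-preserving automorphism can move skeleton vertices into modules --- which is exactly the kind of accidental symmetry you acknowledge but do not rule out. Two, and decisively: the claim that the twin-reduced skeleton is \emph{bounded}, so that a ``finite search'' yields exactly the seventeen families (a)--(q), is precisely the theorem. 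Nothing in your proposal bounds the order of a twin-free core with $\chi_D(C) = |C|-2$ (compare $C_6$ and $2K_3$, which are twin-free on six vertices --- why is there nothing twin-free and larger?), and no mechanism is given for generating the list. What you have is a correct normalization plus a statement that the hard enumeration ``is where the bulk of the case analysis lies''; in \cite{dis 1 2} that enumeration, carried out through a chain of lemmas on joins and unions with complete multipartite graphs and an extended structural analysis of $(n-2)$-colorings, \emph{is} the proof. As it stands, the forward direction of your attempt is a programme, not a proof.
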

\begin{theorem}\label{5.8}
Let 
$G$ 
be a graph of order
$n$ 
and  
$\chi_D(G)=n$. 
Then 
\begin{center}
$\chi_D(G \cup G)=\begin{cases}
n+1, & \text{if}\; G\cong K_n\\
2n, & \text{if}\; G\cong \overline{K_n}\\
n, & \text{Otherwise.}\;
\end{cases}$
\end{center}
\end{theorem}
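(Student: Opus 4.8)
The plan is to start from Theorem~\ref{e}: the hypothesis $\chi_D(G)=n=|V(G)|$ forces $G$ to be complete multipartite, with parts $P_1,\dots,P_m$. Since the distinguishing coloring uses $n=|V(G)|$ classes, every class is a singleton, so the partition into singletons is the unique distinguishing coloring and $G$ is uniquely distinguishing colorable. The three cases of the statement are exactly $m=n$ (all parts singletons, i.e. $G\cong K_n$), $m=1$ (i.e. $G\cong\overline{K_n}$), and $m\ge 2$ with some $|P_i|\ge 2$ (the ``otherwise'' case).

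When $G\cong K_n$, the color classes are the $n$ singletons and $\mathrm{Aut}(K_n)$ is the full symmetric group acting on them. Given any labelings of the two components of $K_n\cup K_n$, the permutation matching equal labels is therefore an automorphism carrying one component onto the other and preserving labels, so $K_n$ is of type $1$; Lemma~\ref{lem3} then yields $\chi_D(K_n\cup K_n)=n+1$. When $G\cong\overline{K_n}$, we have $G\cup G\cong\overline{K_{2n}}$, whose automorphism group is the symmetric group on all $2n$ vertices; any color class of size $\ge 2$ is fixed setwise by the transposition of two of its vertices, so a distinguishing coloring must make every class a singleton, giving $\chi_D(\overline{K_{2n}})=2n$.

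For the remaining case I would prove $\chi_D(G\cup G)=n$ by exhibiting a distinguishing coloring with $n$ colors (the bound $\chi_D(G\cup G)\ge\chi_D(G)=n$ being automatic for connected $G$). Color each of the two components with all $n$ colors, one per vertex; fix a labeling $L$ of the first component and color the second by the same assignment with the two colors $1$ and $2$ interchanged. I would choose $L$ so that color $1$ is placed on a vertex $v_1$ belonging to a part $U$ with $|U|\ge 2$ and color $2$ on a vertex $v_2$ in a different part (possible precisely because $m\ge 2$ and some part has size $\ge 2$). To see this coloring is distinguishing, consider a color-preserving automorphism $\phi$ of $G\cup G$. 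If $\phi$ fixes each component, then since all colors within a component are distinct it must fix every vertex. If $\phi$ swaps the components, then reading it as an automorphism of $G$ the color condition forces $\phi$ to be exactly the transposition $(v_1\,v_2)$, fixing every other vertex.

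The crux, and the main obstacle, is to rule out this last possibility, and it is settled by the structure of complete multipartite graphs: the transposition of two vertices (fixing all others) is an automorphism only when the two vertices lie in the same part, or when both of their parts are singletons. Our choice places $v_1$ in a part of size $\ge 2$ and $v_2$ in a different part, so neither condition holds and $(v_1\,v_2)$ is not an automorphism of $G$. Hence no non-trivial color-preserving automorphism of $G\cup G$ exists, the coloring is distinguishing, and $\chi_D(G\cup G)=n$. Equivalently, this construction certifies that $G$ is of type $2$, so the corollary to Theorem~\ref{51} also gives $\chi_D(G\cup G)=\chi_D(G)=n$.
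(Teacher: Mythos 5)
Your proposal is correct and follows essentially the same route as the paper: invoke Theorem~\ref{e} to reduce to complete multipartite graphs, dispose of $K_n$ and $\overline{K_n}$ directly, and in the remaining case rainbow-color both components of $G \cup G$ with $n$ colors, perturbing the second so that the forced color-matching map between components fails to be an isomorphism. The only difference is cosmetic --- the paper recolors a part $U$ with $|U|\ge 2$ using $\{1,\dots,|U|\}$ in one component and $\{1,\dots,|U|-1,|U|+1\}$ in the other, whereas you transpose colors $1$ and $2$ across two parts and verify via your (correct) criterion for when a transposition is an automorphism of a complete multipartite graph; your verification is in fact more explicit than the paper's ``one can check.''
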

\begin{proof} 
By Theorem \ref{e}, 
$G$ 
is a complete multipartite graph.
The only non-trivial case is when 
$G \notin \{K_n,  \overline{K_n}\}$. 
In this case, there is at least one part, via $U$,  with cardinality more than 
$2$. 
There exist at least two subsets of size 
$|U|$ 
of 
$\{1, 2. \ldots, \chi_D(G)\}$. 
Color 
$U$ 
with 
$\{1, 2, \ldots, |U|\}$ 
in a component of 
$G \cup G$ 
and color 
$U$
in the other component with
$\{1, 2, \ldots, |U|-1, |U|+1\}$.
Also, color other vertices of 
$G \cup G$ 
arbitrary. 
One can check that this is a distinguishing $\chi_D(G)$-coloring for 
$G \cup G$.  
\end{proof}
\begin{lemma}\label{lem6}
Let 
$G$ 
be the join of a nonvacuous complete multipartite graph 
with one of graphs (1) and (2) in Theorem \ref{e0} or one of graphs (a) - (q) in Theorem \ref{e1}. 
Then 
$\chi_{D}(G \cup G)= \chi_{D}(G)$.
\end{lemma}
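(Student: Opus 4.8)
The plan is to prove the only nontrivial inequality, $\chi_{D}(G\cup G)\le \chi_{D}(G)=:k$, since for a connected graph one already knows $\chi_{D}(G\cup G)\in\{k,k+1\}$. Write $G=M\vee B$, where $M$ is the nonvacuous complete multipartite factor and $B$ is one of the listed graphs, and fix an optimal distinguishing coloring $c$ of $G$. I would color both components of $G\cup G$ with the single partition underlying $c$, but using two different labelings chosen so that no color preserving automorphism can match the two copies.

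First I would pin down how $c$ behaves on $M$. Two vertices in one part of $M$ are nonadjacent with identical neighborhoods in $G$, so if they shared a color the transposition swapping them would be a nontrivial color preserving automorphism, contradicting that $c$ is distinguishing; hence they get distinct colors. Vertices in different parts of $M$, and every pair consisting of one $M$-vertex and one $B$-vertex, are adjacent and so are colored differently as well. Thus each vertex of $M$ is a singleton class of $c$ whose color occurs nowhere else, and all remaining classes lie inside $B$. Next, since these graphs satisfy $\chi_{D}(G)<|V(G)|$ (by Theorems~\ref{e0} and~\ref{e1}), Theorem~\ref{e} shows that $G$—and therefore $B$, as a join of complete multipartite graphs is complete multipartite—is not complete multipartite. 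Hence $\overline{B}$ is not a disjoint union of cliques and has a non-complete component $C$. Every class of $c$ restricted to $B$ is independent in $B$, i.e.\ a clique in $\overline{B}$, so it lies in one component of $\overline{B}$; I pick a class $W\subseteq C$. Fixing any $a\in V(M)$, I color the first copy of $G$ by $c$ and the second copy by the labeling obtained from $c$ by interchanging the colors of $\{a\}$ and $W$, which is again a proper distinguishing coloring since it realizes the same partition as $c$.

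To verify that this $k$-coloring of $G\cup G$ is distinguishing, consider a color preserving automorphism $\Phi$. If $\Phi$ fixes each component, its restrictions are color preserving automorphisms of $(G,c)$ and hence identities. If $\Phi$ swaps the components, it yields a color preserving isomorphism from the first copy to the second; following the color of $\{a\}$, it must send $a$ into $W$. But in $\overline{G}=\overline{M}\sqcup\overline{B}$ the vertex $a$ lies in a clique component of $\overline{M}$, while $W\subseteq C$ lies in a non-complete component, and automorphisms of $G$ (equivalently of $\overline{G}$) only permute isomorphic components; a clique is never isomorphic to $C$, so no automorphism carries $a$ into $W$. This rules out the swap, so the coloring is distinguishing and $\chi_{D}(G\cup G)\le k$.

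The delicate step is this last orbit separation. I cannot simply assume $\mathrm{Aut}(G)$ respects the split $V(M)\cup V(B)$: when $\overline{B}$ happens to contain clique components isomorphic to parts of $M$, there genuinely are automorphisms mixing $M$ and $B$. What makes the argument go through is that $W$ is chosen inside a non-complete component of $\overline{B}$, whose vertices can never be matched with the clique-component vertices of $\overline{M}$ housing $a$; securing such a component is exactly what non-complete-multipartiteness of $B$ provides, and it is the point I would need to argue most carefully.
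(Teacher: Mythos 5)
Your proof is correct, and although it shares the paper's overall scaffolding, the decisive step is genuinely different. Both arguments color the two copies of $G$ with the \emph{same} partition under two labelings differing by a transposition of one color pair, after first observing that every vertex of the multipartite factor is a singleton class with a private color (the paper states this without your justification via the transposition of two same-part vertices, which is worth making explicit as you do). Where the proofs diverge is in certifying that no color-preserving automorphism can swap the two components. The paper proves the structural claim that any isomorphism $f$ between the copies satisfies $f(H_1)=H_2$, i.e.\ preserves the multipartite factor, and this rests on an implicit case-by-case verification (``one can check'') that none of the graphs (1)--(2) of Theorem \ref{e0} or (a)--(q) of Theorem \ref{e1} contains a dominating vertex; it then transposes the color of a repeated class, necessarily inside $K$, with the private color of an $H$-vertex, so a swap already fails by cardinality. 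You instead make no claim that $\mathrm{Aut}(G)$ respects the join decomposition: you transpose the private color of $a\in M$ with a class $W$ lying in a non-complete component $C$ of $\overline{B}$ --- such a component exists because $B$ cannot be complete multipartite, since otherwise $G=M\vee B$ would be and Theorem \ref{e} would force $\chi_D(G)=|V(G)|$, contradicting Theorems \ref{e0} and \ref{e1} --- and then use the invariant that automorphisms of $G$, equivalently of $\overline{G}=\overline{M}\cup\overline{B}$, permute components of $\overline{G}$ within isomorphism types, so $a$, which sits in a clique component, can never be mapped into $W\subseteq C$. This orbit-separation argument is essential in your setup (your $W$ may be a singleton, so cardinality alone does not finish), and you supply it correctly. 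What each route buys: the paper's claim gives structural information about isomorphisms between the copies, but hides a lengthy multi-case check; your argument is uniform, requires no case analysis at all, and remains sound even in situations where automorphisms could mix $M$ and $B$ --- choosing $W$ inside a non-complete component of $\overline{B}$, rather than an arbitrary class of $B$, is precisely what makes your version robust, as you correctly flag in your final paragraph.
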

\begin{proof} 
Let 
$H$
be a nonvacuous complete multipartite graph and 
$K$
be one of graphs (1) and (2) in Theorem \ref{e0} or one of graphs (a) - (q) in Theorem \ref{e1}. 
Let 
$G_i$,
$H_i$ 
and 
$K_i$
denote the graphs isomorphic to 
$G$, 
$H$ 
and 
$K$
respectively,
for
$i=1, 2$. 
We claim that if 
$f$ 
is an automorphism with 
$f(G_1)=G_2$, 
then 
$f(H_1)=(H_2)$. 
For this, assume that all parts of
$H$ 
have cardinality more than one. On the other hand, all vertices of 
$K$ 
adjacent to all vertices of 
$H$ 
in 
$G$.
This implies that we cannot replace some vertices of 
$H$ 
with some vertices of 
$K$ 
such that the structure of 
$H$ 
is preserved. 
(Note that 
$K$ 
is not a complete multipartite graph.)
If 
$H$ 
has a part $\{x\}$, then a necessary condition for that we can replace 
$\{x\}$ 
with a vertex of 
$K$ 
such that the structure of 
$H$ 
is preserved, is that there exist a vertex 
$y \in K$
with 
${\rm deg}_{K}(x)=|V(K)|-1$. 
One can check there is no such $x$ in graphs (1) and (2) in Theorem \ref{e0} and graphs (a) - (q) in Theorem \ref{e1}.

There exist two vertices with the same color in 
$G$. 
Assume that color 
$1$ 
is assigned to two vertices in 
$G_1$ 
and color 
$2$ 
is assigned to a vertex in 
$H_1$. 
The vertices colored by $1$ are in 
$K_1$. 
Now, color the vertices of 
$G_2$ 
with the coloring of 
$G_1$ 
by interchanging colors 
$1$ 
and 
$2$. 
Therefore, any automorphism that image 
$G_1$
to 
$G_2$ 
does not preserve the colors.
\end{proof}


\begin{theorem}
Let 
$G$ 
be a graph of order 
$n > 3$ 
with
$\chi_D(G) = n-1$.
Then 
\begin{itemize}
\item[($a$)]
$\chi_D(G\cup G)=\chi_D(G)$ 
if and only if 
\begin{itemize}
\item[($a_1$)]
$G$ 
is the join of a nonvacuous complete multipartite graph with one of the following: 
\begin{itemize}
\item[($a_{11}$)]
$2K_2$, or, 
\item[($a_{12}$)]
$H\cup K_1$.
\end{itemize} 
\item[($a_2$)]
$G\cong H\cup K_1$, $H\ncong K_{n-1}$.
\end{itemize}
\item[($b$)] 
$\chi_D(G\cup G)=\chi_D(G)+1$ 
if and only if  
$G$ 
is one of the following:
\begin{itemize}
\item[($b_{1}$)]
$2K_2$ 
\item[($b_{2}$)]
$K_{n-1}\cup K_1$.
\end{itemize}
\end{itemize}
Where 
$H$ 
is a complete multipartite graph with at least two parts. 
\end{theorem}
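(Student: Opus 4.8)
The plan is to start from the structural classification already available: by Theorem \ref{e0}, every graph $G$ of order $n>3$ with $\chi_D(G)=n-1$ is the join of a complete multipartite graph $M$ (possibly vacuous) with either $2K_2$ or $H\cup K_1$, where $H$ is complete multipartite with at least two parts. I would split the analysis according to whether $M$ is vacuous (so $G$ is disconnected) or nonvacuous (so $G$ is connected), since the disjoint union $G\cup G$ behaves very differently in the two regimes. Throughout I would use the trivial bound $\chi_D(G\cup G)\ge \chi_D(G)$, which holds because any automorphism of one component, extended by the identity on the other, is an automorphism of $G\cup G$, so the restriction of a distinguishing coloring of $G\cup G$ to a component is itself distinguishing.

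The connected case is immediate: if $M$ is nonvacuous then $G$ is the join of a nonvacuous complete multipartite graph with one of graphs $(1),(2)$ of Theorem \ref{e0}, so Lemma \ref{lem6} yields $\chi_D(G\cup G)=\chi_D(G)$ at once. This produces exactly family $(a_1)$, and none of these graphs contribute to case $(b)$. It remains to treat the vacuous case, where $G=2K_2$ or $G=H\cup K_1$. For $G=2K_2$ I would argue directly that $\chi_D(2K_2)=3$ while $\chi_D(2K_2\cup 2K_2)=\chi_D(4K_2)=4$: a color-preserving automorphism of $4K_2$ can use no edge-flip (properness forbids equal endpoint colors), hence is merely a permutation of the four edges preserving their unordered pairs of colors, so a distinguishing coloring needs four edges carrying four distinct pairs of distinct colors, forcing $\binom{k}{2}\ge 4$ and hence $k\ge 4$. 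This yields $(b_1)$.

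The heart of the argument is $G=H\cup K_1$, where $G\cup G$ consists of two copies $H^{(1)},H^{(2)}$ of $H$ together with two isolated vertices $w_1,w_2$. Since $H$ is complete multipartite, Theorem \ref{e} forces each copy to be rainbow, using all $n-1$ colors when the palette has size $n-1$; a color-preserving map $H^{(1)}\to H^{(2)}$ is then forced to be the unique color-matching bijection, which is an isomorphism precisely when the two copies induce the \emph{same} partition of $\{1,\dots,n-1\}$ into blocks of the prescribed part-sizes of $H$. The decisive point is therefore enumerative: the number of set partitions of $\{1,\dots,n-1\}$ with the block-size profile of $H$ exceeds $1$ if and only if $H\not\cong K_{n-1}$ (given that $H$ has at least two parts). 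When $H\not\cong K_{n-1}$ I would pick two distinct such partitions, color $H^{(1)},H^{(2)}$ accordingly, and give $w_1,w_2$ distinct colors; the rainbow property kills internal automorphisms, the distinct partitions kill the component swap, and distinct colors on $w_1,w_2$ kill the swap of the isolated vertices, so $n-1$ colors suffice and $\chi_D(G\cup G)=\chi_D(G)$, giving $(a_2)$. When $H\cong K_{n-1}$ the profile is all-singletons, the partition is unique, so with $n-1$ colors the two cliques are forced to carry matching rainbow colorings and the component swap cannot be broken; hence $\chi_D(G\cup G)\ge n$, while $n$ colors clearly suffice, giving $(b_2)$.

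I expect the main obstacle to be this $H\cup K_1$ analysis. One must argue cleanly that a color-preserving isomorphism between two rainbow copies is \emph{forced} to be the color-matching bijection, so that breaking the symmetry is equivalent to exhibiting two distinct admissible color-partitions, and then pin down exactly when such a second partition exists. Care is also needed to confirm that the two isolated vertices form their own automorphism orbit (they have degree $0$, whereas every vertex of $H$ has positive degree because $H$ has at least two parts), so that the component swap and the isolated-vertex swap can legitimately be handled independently.
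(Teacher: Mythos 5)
Your proof is correct, and its skeleton coincides with the paper's: both start from the classification in Theorem \ref{e0}, dispatch the connected case (nonvacuous join factor) by Lemma \ref{lem6}, and then treat the two vacuous cases $2K_2$ and $H\cup K_1$ separately. The genuine difference is in how the decisive case $G=H\cup K_1$ is settled. The paper handles it in one line by applying Theorem \ref{5.8} to $H$ (a complete multipartite graph of order $n-1$ with $\chi_D(H)=n-1$), obtaining $\chi_D(H\cup H)=n-1$ when $H\ncong K_{n-1}$ and $=n$ when $H\cong K_{n-1}$, and then silently absorbing the two isolated vertices of $G\cup G=H\cup H\cup \overline{K_2}$; you instead re-derive this from first principles: the restriction of any distinguishing coloring of $G\cup G$ to an $H$-copy is distinguishing (extension-by-identity), so by Theorem \ref{e} each copy is forced to be rainbow on an $(n-1)$-color palette, a color-preserving swap of the copies must be the color-matching bijection, and breaking the swap is exactly the existence of two distinct set partitions of the palette with the block-size profile of $H$ --- which happens iff $H\ncong K_{n-1}$. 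Your route costs more space but buys completeness at precisely the points the paper leaves implicit: the bridge from $\chi_D(H\cup H)$ to $\chi_D(H\cup H\cup\overline{K_2})$ (isolated vertices form their own orbit by the degree argument and are killed by two distinct existing colors, legitimate since $n-1\geq 3$), and an actual proof that $\chi_D(4K_2)=4$ via the count $\binom{k}{2}\geq 4$ of distinct unordered color pairs on edges, where the paper merely asserts the value. The paper's route buys brevity and reuse, since Theorem \ref{5.8} is already established; your partition-profile argument is in effect a self-contained re-proof of the relevant special case of that theorem.
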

\begin{proof}
According to Theorem \ref{e0}, assume first that 
$G$ 
is the join of a nonvacuous complete multipartite graph with one of the graphs
$2K_2$ 
or 
$H\cup K_1$. 
Lemma \ref{lem6} concludes the results in 
$(a_1)$. 
For 
$(b_1)$, 
let
$G=2K_2$. 
Hence, 
$\chi_D(G\cup G)= \chi_D(4K_2)=4=\chi_D(G)+1$ 
and the result follows.
Let 
$G=H\cup K_1$. 
Since 
$H$ 
is a complete multipartite graph with at least two parts, 
$H \ncong \overline{K_{n-1}}$. 
Now, the results in 
$(b_2)$  
and 
$(a_2)$
conclude from Theorem \ref{5.8}.
\end{proof}

\begin{theorem}
Let 
$G$ 
be a graph of order 
$n > 4$ 
with 
$\chi_D(G) = n-2$.
Then 
\begin{itemize}
\item[($a$)]
$\chi_D(G\cup G)=\chi_D(G)$ 
if and only if 
\begin{itemize}
\item[($a_1$)]
$G$ 
is the join of a nonvacuous complete multipartite graph with one of the following: 
\begin{itemize} 
\begin{small}
\begin{multicols}{2}
\item[($a_{11}$)] $ P_5 $
\item[($a_{13}$)] $ C_6 $
\item[($a_{15}$)] $ \overline{K_r} \cup K_2 $, for $ r \geq 2$
\item[($a_{17}$)] $ \hat{K}_{3}(H_1, H_2, H_3) \cup K_2 $
\item[($a_{19}$)] $ 2K_2 \vee 2K_2 $ 
\item[($a_{111}$)] $ 2K_2 \cup K_1 $ 
\item[($a_{113}$)] $ (2K_2 \vee H_1) \cup K_1 $
\item[($a_{115}$)] $\hat{G}_{5}(K_1, H_1, H_2, K_1)$
\item[($a_{117}$)] $\hat{G}_{7}(H_1, H_2, H_3, H_4, K_1, K_1)$ 
\item[($a_{12}$)] $ C_5 $ 
\item[($a_{14}$)] $ 2K_3 $
\item[($a_{16}$)] $\hat{K}_{2}(\overline{K_r}, H_1) \cup K_2$, for $ r\geq 2 $
\item[($a_{18}$)] $\hat{K}_{2}(H_1, H_2) \cup \overline{K_2}$ 
\item[($a_{110}$)] $ 2K_2 \vee (\hat{K}_{2}(H_1, H_2) \cup K_1) $
\item[($a_{112}$)]$\hat{G}_{3}(H_1, H_2, H_3, K_1, K_1)$, for $\hat{G}_{3} \in${\rsfs G}$_3$
\item[($a_{114}$)] $\hat{G}_{5}(H_1, H_2, K_1, K_1)$
\item[($a_{116}$)] $\hat{G}_{6}(K_1, H_1, H_2, H_3, K_1)$ 
\end{multicols}
\end{small} 
\end{itemize} 
\item[($a_2$)]  
$G$ 
is one of the following: 
\begin{itemize}
\begin{small}
\begin{multicols}{2}
\item[($a_{21}$)] $ P_5 $
\item[($a_{23}$)] $ C_6 $
\item[($a_{25}$)] $ 2K_2 \vee 2K_2 $  
\item[($a_{27}$)] $ (2K_2 \vee H_1) \cup K_1 $
\item[($a_{29}$)] $\hat{G}_{7}(H_1, H_2, H_3, H_4, K_1, K_1)$ 
\item[($a_{211}$)] $\hat{G}_{6}(K_1, H_1, H_2, H_3, K_1)$
\item[($a_{22}$)] $ C_5 $ 
\item[($a_{24}$)] $ 2K_3 $
\item[($a_{26}$)] $\hat{K}_{2}(\overline{K_r}, H_1) \cup K_2$, for $ r\geq 2 $ 
\item[($a_{28}$)] $ 2K_2 \vee (\hat{K}_{2}(H_1, H_2) \cup K_1) $
\item[($a_{210}$)]$\hat{G}_{3}(H_1, H_2, H_3, K_1, K_1)$, for $\hat{G}_{3} \in${\rsfs G}$_3$ 
\end{multicols}
\item[($a_{212}$)] $ \hat{K}_{3}(H_1, H_2, H_3) \cup K_2 $, where $\hat{K}_{3}(H_1, H_2, H_3)$ is not a complete graph.
\item[($a_{213}$)] $\hat{K}_{2}(H_1, H_2) \cup \overline{K_2}$, where $\hat{K}_{2}(H_1, H_2)$ is not a complete graph and $|\hat{K}_{2}(H_1, H_2)|\geq 4$.
\item[($a_{214}$)] $\hat{G}_{5}(H_1, H_2, K_1, K_1)$, where $H_1 \neq K_1$ or $H_2 \neq K_1$. 
\item[($a_{215}$)] $\hat{G}_{5}(K_1, H_1, H_2, K_1)$, where $H_1 \neq K_1$ or $H_2 \neq K_1$. 
\end{small}
\end{itemize}
\end{itemize}
where each of 
$H_1, H_2, H_3, H_4$ 
is a nonvacuous complete multipartite graph.
\item[($b$)] 
$\chi_D(G\cup G)=\chi_D(G)+1$ 
if and only if  
$G$ 
is one of the following: 
\begin{itemize}
\begin{small}
\begin{multicols}{2}
\item[($b_{1}$)] $K_{n-2}\cup K_2$
\item[($b_{2}$)]$K \cup \overline{K_2}$, $K \in \{P_3, K_{3}\}$.
\item[($b_{3}$)] $ 2K_2 \cup K_1 $ 
\item[($b_{4}$)] $P_4$
\end{multicols}
\end{small}
\end{itemize}
\item[($c$)] 
$\chi_D(G\cup G)=\chi_D(G)+2$
if and only if 
$G\cong K_2 \cup \overline{K_2}$.
\item[($d$)] 
$\chi_D(G\cup G)=2\chi_D(G)$
if and only if
$G\cong \overline{K_r} \cup K_2 $, for $ r \geq 2$.
\end{itemize} 
\end{theorem}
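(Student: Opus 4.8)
The starting point is Theorem~\ref{e1}, which expresses every graph $G$ of order $n>4$ with $\chi_D(G)=n-2$ as a join $M\vee K$, where $M$ is a (possibly vacuous) complete multipartite graph and $K$ is one of the seventeen templates (a)--(q). My plan is to organise the entire argument around the single dichotomy \emph{whether $M$ is vacuous}. If $M$ is nonvacuous, then $G$ is exactly the join of a nonvacuous complete multipartite graph with one of (a)--(q), so Lemma~\ref{lem6} applies verbatim and yields $\chi_D(G\cup G)=\chi_D(G)$. This settles the whole family $(a_1)$ in one stroke and is the easy half of part~$(a)$. All remaining work concerns the vacuous case, where $G$ coincides with a bare template $K$, and the problem reduces to computing $\chi_D(K\cup K)$ template by template.

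For the \emph{connected} templates among (a)--(q) --- the paths and cycles $P_5,C_5,C_6$, the joins $2K_2\vee 2K_2$ and $2K_2\vee(\hat K_2(H_1,H_2)\cup K_1)$, and the substituted graphs $\hat G_3,\hat G_5,\hat G_6,\hat G_7$ (with $P_4$ arising as the all-$K_1$ degeneration of $\hat G_5$) --- I would use the trichotomy \emph{UDC of type 1 / UDC of type 2 / not UDC}. By the corollary to Theorem~\ref{51}, a connected $G$ has $\chi_D(G\cup G)=\chi_D(G)+1$ precisely when it is UDC of type~1, while by Theorem~\ref{51} every non-UDC connected graph gives $+0$; a UDC graph of type~2 also gives $+0$. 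Thus for each connected template it suffices to decide UDC-ness and, if applicable, the type. For instance $C_5$ and $C_6$ are not UDC by Remark~\ref{rem1}, so they give $+0$; by contrast the degenerate $P_4$ is UDC of type~1 (its reversal carries each $2$-colouring to the other), giving $+1$ and landing in $(b_4)$. The side conditions ``$H_1\neq K_1$ or $H_2\neq K_1$'' in $(a_{214})$ and $(a_{215})$ are exactly the requirement that the template not collapse to $P_4$.

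For the \emph{disconnected} templates --- $2K_3$, $\overline{K_r}\cup K_2$, $\hat K_2(\cdot)\cup K_2$, $\hat K_3(\cdot)\cup K_2$, $\hat K_2(\cdot)\cup\overline{K_2}$, $2K_2\cup K_1$, and $(2K_2\vee H_1)\cup K_1$ --- the type machinery does not apply, so I would compute $\chi_D(K\cup K)$ directly via the disjoint-union principle: since $K\cup K$ doubles every component of $K$, a distinguishing colouring must distinguishingly colour each component and assign pairwise non-colour-isomorphic colourings to mutually isomorphic components. Two mechanisms govern the answer. First, a duplicated complete component $K_k$ forces distinct $k$-subsets of colours on the copies, so the threshold is whether enough such subsets exist; this is why $2K_3$ gives $+0$ (four triangles fit into the $\binom{4}{3}=4$ available $3$-subsets) while $2K_2\cup K_1$ gives $+1$ (four edges require $\binom{c}{2}\geq 4$, forcing $c=4$). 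Second, an independent-set component $\overline{K_s}$ doubles to $\overline{K_{2s}}$, whose vertices must all receive distinct colours --- exactly the amplification recorded in Theorem~\ref{5.8} --- and this produces the anomalous values $\chi_D(G)+2$ for $K_2\cup\overline{K_2}$ in $(c)$ and $2\chi_D(G)$ for $\overline{K_r}\cup K_2$ in $(d)$ (the two coincide at $r=2$).

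Finally I would sort the outcomes: templates yielding $+0$ populate $(a_2)$, those yielding $+1$ populate $(b)$, and the independent-set anomalies give $(c)$ and $(d)$. The main obstacle is not any single inequality but the bookkeeping: each template (a)--(q) admits degenerate substitutions (all $H_i=K_1$, or a part collapsing so that a subgraph becomes complete) that flip the doubling behaviour, so the delicate step is pinning down, for every template, the exact side conditions separating $+0$ from $+1$ and from the two amplified cases. Concretely, the hardest ingredient is the sharp threshold for the disconnected templates --- proving both that the claimed number of colours suffices (by exhibiting a colouring) and that fewer necessarily fail (via the subset-counting and independent-set arguments) --- together with verifying that the degenerate graphs singled out in $(b)$--$(d)$ are precisely the exceptions to the generic $+0$ behaviour.
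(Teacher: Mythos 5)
Your proposal is correct in outline and shares the paper's skeleton --- Theorem~\ref{e1} to reduce to the seventeen templates, Lemma~\ref{lem6} to dispose of the nonvacuous-join case $(a_1)$ in one stroke, then a template-by-template analysis of the bare case --- but your handling of the bare connected templates is a genuinely different route. The paper never invokes the type machinery in this proof: for each case $(a_{21})$--$(a_{215})$ it exhibits an explicit distinguishing coloring of $G\cup G$ with $\chi_D(G)$ colors (choosing vertices $v_1,\dots,v_4$ and swapping a pair of colors between the two components), each coloring being a concrete certificate of the value $+0$. You instead route the connected templates through the corollary to Theorem~\ref{51} ($\chi_D(G\cup G)=\chi_D(G)+1$ iff $G$ is UDC of type 1), which dispatches $P_5$, $C_5$, $C_6$ instantly via Remark~\ref{rem1} and cleanly explains why $P_4$ --- the all-$K_1$ degeneration of $\hat{G}_5$, exactly the side conditions in $(a_{214})$ and $(a_{215})$ --- is the lone connected exception; this is tidier than the paper's ad hoc colorings for those cases. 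The trade-off is that for the substituted templates $\hat{G}_3$, $\hat{G}_5$, $\hat{G}_6$, $\hat{G}_7$, $2K_2\vee 2K_2$ and $2K_2\vee(\hat{K}_2(H_1,H_2)\cup K_1)$ with arbitrary nonvacuous $H_i$, you still owe a proof of ``not UDC of type 1'' in every case, and the natural certificate for that is precisely a two-component coloring of the paper's kind, so the two approaches converge on the same verifications --- yours organized conceptually, the paper's constructively. For the disconnected templates your two mechanisms (distinct color-subsets on duplicated complete components; independent sets doubling to $\overline{K_{2s}}$, i.e.\ the amplification of Theorem~\ref{5.8}) are essentially the paper's own arguments for $(b_1)$, $(b_3)$, $(c)$, $(d)$ and its appeals to Theorem~\ref{5.8} in $(a_{212})$, $(a_{213})$ and $(a_{26})$, including your correct observation that $(c)$ and $(d)$ coincide at $r=2$. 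One incidental benefit of your systematic counting: applied to template (h) with $\hat{K}_2(H_1,H_2)$ complete of order at least $4$, it yields $\chi_D(G\cup G)=\chi_D(G)+1$ for $G=K_{n-2}\cup\overline{K_2}$, a boundary case the paper's proof files somewhat ambiguously under $(b_2)$ even though the theorem's list there restricts $K$ to $\{P_3,K_3\}$; your bookkeeping would force this case to be stated explicitly.
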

\begin{proof}
The graph 
$G$ 
is the join of a complete multipartite graph (possibly vacuous) with one of the graphs presented in Theorem \ref{e1}. 
If 
$G$ 
is the join of a nonvacuous complete multipartite graph with one of the graphs (a) - (q) presented in Theorem \ref{e1}, 
then by Lemma \ref{lem6}, we have the results in 
$(a_1)$.  
Now, let 
$G$
be one of the graphs (a) - (q) that are presented in Theorem \ref{e1}.
In some cases with $\chi_D(G\cup G)=\chi_D(G)$, we will select distinct vertices 
$v_1, v_2, v_3, v_4$ 
of each component of
$G \cup G$ 
for a coloring of 
$G \cup G$ 
such that all vertices other than 
$v_1, v_2, v_3, v_4$ 
receive distinct colors in 
$G$.
Assign to
$v_1$ 
and
$v_2$ 
one color and to
$v_3$ 
and
$v_4$ 
another color in each component of 
$G \cup G$. 
\begin{itemize}
\item[($a_{21}$), ($a_{22}$)] In these cases, there is a singleton color class in each distinguishing coloring. 
Asign color $1$ and color $2$ to the singleton color classes in each component.
\item[($a_{23}$)] Assign color $1$ to $v_1$ and $v_2$ and color $2$ to $v_3$ and $v_4$ in a component. 
Also, assign color $3$ to $v_1$ and $v_2$ and color $2$ to $v_3$ and $v_4$ in the other component.
\item[($a_{24}$)] In this case, we have four triangles $K_3$. The vertices  $v_1$ and $v_3$ are in a triangle in $G$ and the vertices  $v_2$ and $v_4$ are in the other triangle. Assign color $1$ to $v_1$ and $v_2$, color $2$ to $v_3$ and $v_4$ and renaming vertices is colored by colors $3$ and $4$ in a copy of 
$G$. For the other copy,  assign color $3$ to $v_1$ and $v_2$, color $4$ to $v_3$ and $v_4$ and renaming vertices is colored by colors $1$ and $2$. 
\item[($a_{25}$)] In $2K_2 \vee 2K_2$, let the vertices $v_1$ and $v_2$ be in a copy $2K_2$ in $2K_2 \vee 2K_2$ 
and the vertices  $v_3$ and $v_4$ be in the other copy.  Assign color $1$ to $v_1$ and $v_2$ and color $2$ to $v_3$ and $v_4$ in a component of 
$G \cup G$. For the other component,  assign color $3$ to $v_1$ and $v_2$ and color $4$ to $v_3$ and $v_4$. 
\item[($a_{26}$)]  Since $r\geq 2$, $\hat{K}_{2}(\overline{K_r}, H_1) \ncong K_{n-2}, \overline{K_{n-2}}$ 
and 
$|\hat{K}_{2}(\overline{K_r}, H_1)|\geq 3$. 
Let $v_1 \in H_1, v_2, v_3 \in K_2$ and $v_4 \in \overline{K_r}$ in a copy of 
$G$. Assign color $1$ to $v_1$ and $v_2$, color $2$ to $v_3$ and $v_4$ and color $3$ to an other vertex of $\overline{K_r}$.
For the other copy, let $v_2, v_3 \in K_2$ and $v_1, v_4 \in \overline{K_r}$. 
Assign color $1$ to $v_1$ and $v_2$ and color $3$ to $v_3$ and $v_4$.
\item[($a_{27}$), ($a_{28}$)] In both copies of $G \cup G$, let $v_1, v_2 \in 2K_2$, $v_3 \in K_1$ and 
$v_4 \in \hat{K}_{2}(H_1, H_2)(v_4 \in H_1)$. 
Assign color $1$ to $v_1$ and $v_2$ and color $2$ to $v_3$ and $v_4$ in a copy. 
For the other copy,  assign color $2$ to $v_1$ and $v_2$ and color $1$ to $v_3$ and $v_4$. 
\item[($a_{29}$)]  
Let $v_1 \in K_1=\{v_5\}, v_3 \in K_1=\{v_6\}, v_4 \in H_4$ and $v_2 \in H_1$ in the both components of 
$G \cup G$. 
Assign color $1$ to $v_1$ and $v_2$ and color $2$ to $v_3$ and $v_4$, in a component.
For the other component, assign color $3$ to $v_1$ and $v_2$ and color $4$ to $v_3$ and $v_4$. 
\item[($a_{210}$)]  
In both copies of $G \cup G$, let $v_1 \in K_1=\{v_4\}, v_3 \in K_1=\{v_5\}, v_2 \in H_3$ and $v_4 \in H_2$.
Assign color $1$ to $v_1$ and $v_2$ and color $2$ to $v_3$ and $v_4$, in a component.
For the other component, assign color $1$ to $v_1$ and $v_2$ and color $3$ to $v_3$ and $v_4$.
\item[($a_{211}$)]  
Let $v_1 \in K_1=\{v_1\}, v_3 \in K_1=\{v_5\}, v_2 \in H_2$ and $v_4 \in H_1$, in the both components of 
$G \cup G$. 
Assign color $1$ to $v_1$ and $v_2$ and color $2$ to $v_3$ and $v_4$, in a component.
For the other component, assign color $2$ to $v_1$ and $v_2$ and color $1$ to $v_3$ and $v_4$.
\item[($a_{212}), (b_{1}$)]  
Let 
$G=\hat{K}_{3}(H_1, H_2, H_3) \cup K_2$. 
If $\hat{K}_{3}(H_1, H_2, H_3)$ is a complete graph, then $\hat{K}_{3}(H_1, H_2, H_3) \cup \hat{K}_{3}(H_1, H_2, H_3)$ is not distinguished by 
$\chi_D(G)$ colors. In this case, one can check that $\chi_D(G \cup G)=\chi_D(G)+1$. 
If $\hat{K}_{3}(H_1, H_2, H_3)$ is not a complete graph, by Theorem \ref{5.8}, the result in $(a_{212})$ is immediate.
\item[($a_{213}), (b_{2}), (c)$] 
Let $G=\hat{K}_{2}(H_1, H_2) \cup \overline{K_2}$. If $|\hat{K}_{2}(H_1, H_2)|\leq 3$, then for distinguishing the vertices 
$\overline{K_4}$ 
in 
$G \cup G$, 
we need at least 
$4$ 
colors. 
For result in 
$(c)$, 
let $|\hat{K}_{2}(H_1, H_2)|= 2$. Then 
$\hat{K}_{2}(H_1, H_2)$ 
is isomorphic with 
$K_2$ 
and
$\chi_D(G)=2$.
If $|\hat{K}_{2}(H_1, H_2)|= 3$, then 
$\hat{K}_{2}(H_1, H_2)$ 
is isomorphic with 
$K_3$ 
or 
$P_3$. 
In both cases, $\chi_D(G \cup G)=\chi_D(G)+1$. 
If 
$|\hat{K}_{2}(H_1, H_2)|\geq 4$ 
and $\hat{K}_{2}(H_1, H_2)$ is a complete graph by Theorem \ref{5.8}, then $\chi_D(G \cup G)=\chi_D(G)+1$ and the result in $(b_{2})$ is obtained. Otherwise, we have the results in 
$(a_{213})$.

\item[($a_{214}), (b_{4}$)] 
In this case, if $H_1 = H_2 = K_1$, then $\hat{G}_{5}(H_1, H_2, K_1, K_1)=P_4$ and clearly 
$\chi_D(G \cup G)=\chi_D(G)+1$. This is the result in $(b_{4})$. If $H_1 \neq K_1$ or $H_2 \neq K_1$, 
let $v_1 \in K_1=\{v_3\}, v_3 \in K_1=\{v_4\}, v_2 \in H_1$ and $v_4 \in H_2$ 
in both components of 
$G \cup G$. 
Assign color $1$ to $v_1$ and $v_2$ and color $2$ to $v_3$ and $v_4$ in a component.
For the other component, assign color $2$ to $v_1$ and $v_2$ and color $1$ to $v_3$ and $v_4$.

\item[($a_{215}), (b_{4}$)] 
Similarly to the case ($a_{214}), (b_{4}$), if $H_1 = H_2 = K_1$, then we have the result in $(b_{4})$. So, let 
$H_1 \neq K_1$ or $H_2 \neq K_1$. 
For both components of 
$G \cup G$, let  $v_1 \in K_1=\{v_1\}, v_3 \in K_1=\{v_4\}, v_2 \in H_2$ and $v_4 \in H_1$. 
Assign color $1$ to $v_1$ and $v_2$ and color $2$ to $v_3$ and $v_4$ in a component.
For the other component, assign color $2$ to $v_1$ and $v_2$ and color $3$ to $v_3$ and $v_4$. 

\item[($b_{3}$)]  
In this case, there are four copies of 
$K_2$ 
in 
$G \cup G$. 
For distinguishing those copies, we need 
four colors. Also the copies 
$K_1$ 
are colored by the colors used in 
$K_2$. 
Hence, $\chi_D(G\cup G)= n-1 =\chi_D(G)+1$.
\item[($d$)] 
Since 
$G\cup G=\overline{K_{2r}} \cup 2K_2$ and $r \geq 2$, 
we can color the vertices of $2K_2$ by the colors used in 
$\overline{K_{2r}}$. 
So, 
$\chi_D(G \cup G)=\chi_D(\overline{K_{2r}})=2r$.
\end{itemize}
\end{proof}

\section{Conclusions and Future Research} 
Studying the distinguishing coloring of disconnected graphs depends on the number of colorings of its components. 
Especially for calculating the distinguishing chromatic number of 
$G\cup G$, 
it is necessary to pay attention to unique colorability of 
$G$, 
when 
$G$ 
is a connected graph. 
In this paper, our main focus is on the study of uniquely distinguishing colorable graphs, and with that, we have calculated and characterized the distinguishing chromatic number of the certain disconnected graphs. 
This article can be a starting point for studying the distinguishing chromatic number of disconnected graphs.
We end the paper with the following problems. 
\begin{problem}
Find the family of connected graphs that are uniquely distinguishing colorable or not.
\end{problem} 
\begin{problem}
For what subset of type 2 graphs, is the converse of Theorem \ref{01} correct?
\end{problem}
\begin{problem}
For connected graphs
$G$, 
characterize uniquely distinguishing colorable graphs
$G\cup G$.
\end{problem} 
\begin{problem}
Characterize graphs of type 1.
\end{problem}
\begin{problem}
Characterize the class  $\xi$  of graphs such that 
$G \in \xi$ 
if and only if 
$\chi_{D}(G\cup G) = \chi_{D}(G)$.
\end{problem}





\begin{thebibliography}{9999}  

\bibitem{Ahmadi} 
B. Ahmadi, F. Alinaghipour and M. H. Shekarriz. Number of distinguishing colorings and partitions, \textit{Discrete Mathematics}, {\bf 343}(9):111984, (2020).

\bibitem{alber}
M. O. Albertson and K. L. Collins. Symmetry Breaking in Graphs, \textit{Electronic Journal of Combinatorics}, {\bf 3}, (1996). 

\bibitem{babai}
L. Babai. Asymmetric trees with two prescribed degrees, {\it Acta Mathematica Academiae Scientiarum Hungaricae}, {\bf 29}, no. 1–2, 193–200, (1977). 

 \bibitem{Babai}
L. Babai. Asymmetric coloring of locally finite graphs and profinite permutation groups: Tucker’s Conjecture confirmed, \textit{Journal of Algebra}, {\bf 607}, 64–106, (2022).

\bibitem{Bailey}
R. F. Bailey and P. J. Cameron. Base size, metric dimension and other invariants of groups and graphs, \textit{Bulletin of the London Mathematical Society}, {\bf 43}(2), 209–242, (2011).


\bibitem{6}
B. Bollob$\acute{\textmd{a}}$s and N. Sauer. Uniquely colorable graphs with large girth, \textit{Canadian Journal of Mathematics}, {\bf 28}, 1340-1344, (1976). 

\bibitem{Cartwright}
D. Cartwright and F. Harary. On colorings of signed graphs, \textit{Elemente der Mathematik}, {\bf 23}, 85-89, (1968).

\bibitem{dis 1 2}
M. Cavers and K. Seyffarth. Graphs with large distinguishing chromatic number, {\it Electronic Journal of Combinatorics}, {\bf 20}(1), (2013), \#P19.

\bibitem{Chan}
M, Chan. The distinguishing number of the direct product and wreath product action, \textit{Journal of Algebraic Combinatorics}, {\bf 24}, 331–345, (2006).


\bibitem{7}
C.-Y. Chao and Z. Chen. On uniquely 3-colorable graphs, \textit{Discrete Mathematics}, {\bf 112}, 21-27, (1993).

\bibitem{3}
G. Chartrand and P. Zhang. \textit{Chromatic Graph Theory}, Chapman \& Hall/CRC Press, Boca Raton, FL, (2009). 


 \bibitem{Collins}
K. L. Collins and A. N. Trenk. The Distinguishing Chromatic Number, \textit{Electronic Journal of Combinatorics},  {\bf 13}, (2006). 

\bibitem{Collins2}
K. L. Collins and A. N. Trenk. The Distinguishing Number and Distinguishing Chromatic Number for Posets, \textit{Order}, {\bf 39}, 361–380, (2022).

\bibitem{erwin}
D. Erwin and F. Harary. Destroying automorphisms by fixing nodes, {\it Discrete Mathematics}, {\bf 306}(24), 3244–3252, (2006).

 \bibitem{1}
F. Harary, S. T. Hedetnjemi, and R. W. Robinson. Uniquely colorable graphs, \textit{Journal of combinatorial theory}, {\bf 6}, 264-270, (1969). 

\bibitem{Imrich} 
W. Imrich  and S. Klavžar. Distinguishing Cartesian powers of graphs, \textit{Journal of Graph Theory}, {\bf 53}, 250–260, (2006).

\bibitem{Klavžar}
S. Klavžar, T. L. Wong, and X. Zhu. Distinguishing labelings of group action on vector spaces and graphs, \textit{Journal of Algebra}, {\bf 303}, 626–641, (2006). 

\bibitem{Kalinowski2}
R. Kalinowski and M. Pilśniak. Proper distinguishing arc-colourings of symmetric digraphs,
\textit{Applied Mathematics and Computation}, {\bf 421}, (2022).

\bibitem{Kalinowski}
R. Kalinowski, M. Pilśniak and M. Prorok. Distinguishing arc-colourings of symmetric digraphs, \textit{The Art of Discrete and Applied Mathematics}, {\bf 2}, P2.04,  (2023).

\bibitem{Lehner} 
F. Lehner, Monika Pilsniak, and Marcin Stawiski. Distinguishing infinite graphs with bounded degrees, \textit{Journal of Graph Theory}, {\bf 101}(1):52–65, (2022). 

\bibitem{Meslem}
K. Meslem and E. Sopena. Distinguishing numbers and distinguishing indices of oriented graphs, \textit{Discrete Applied Mathematics}, {\bf 285},  330–342, (2020). 

\bibitem{Russell}
A. Russell and R. Sundaram. A note on the asymptotics and computational complexity of graph distinguish-ability, \textit{Electronic Journal of Combinatorics}, {\bf 5},  R23, 7 p. (1998).

\bibitem{Shekarriz}
M. H. Shekarriz, B. Ahmadi, S. A. Talebpour and M. H. Shirdareh Haghighi. Distinguishing threshold of graphs, (2022)  arXiv:2107.14767 

\bibitem{Tucker}
T. W. Tucker. Distinguishing maps, \textit{Electronic Journal of Combinatorics}, {\bf 18}(1), 50-21, (2011).  

\bibitem{Tymoczko}
J. Tymoczko. Distinguishing numbers for graphs and groups, \textit{Electronic Journal of Combinatorics}, {\bf 11}(1), (2004), R63. 

\bibitem{2}
D. B. West. \textit{Introduction to Graph Theory}, Second Edition. Prentice-Hall, Upper Saddle River, NJ (2001). 









\end{thebibliography}
\end{document}